\documentclass[10pt]{amsart}

\RequirePackage{standalone}
\RequirePackage[
  a4paper,
  margin=1.2in,
  headheight=12pt,
  headsep=5mm,
  footskip=9mm,
  marginparwidth=24mm,
  marginparsep=4mm
]{geometry}
\RequirePackage[utf8]{inputenc}
\RequirePackage[T1]{fontenc}
\RequirePackage{lmodern}
\RequirePackage{microtype}
\RequirePackage[USenglish]{babel}
\RequirePackage{comment}
\RequirePackage[dvipsnames]{xcolor}
\RequirePackage{amsmath,amssymb,amsfonts,amsthm,mathtools}
\RequirePackage{tikz-cd}
\RequirePackage{mathrsfs}
\RequirePackage[mathcal]{eucal}
\RequirePackage[obeyFinal,colorinlistoftodos,textsize=footnotesize]{todonotes}

\definecolor{winered}{rgb}{0.8,0,0}
\definecolor{deepblue}{rgb}{0,0,0.8}
\definecolor{tocgreen}{RGB}{0,120,70}

\RequirePackage{hyperref}
\hypersetup{
  colorlinks=false,
  pdfborder={0 0 1},
  linkbordercolor=tocgreen,
  citebordercolor=tocgreen,
  urlbordercolor=tocgreen
}
\RequirePackage[capitalize,nameinlink]{cleveref}
\crefname{construction}{Construction}{Construction}

\makeatletter
\let\dstyle@contentsline\contentsline
\renewcommand*\let\dstyle@tableofcontents\tableofcontents
\renewcommand\tableofcontents{
  \hypersetup{pdfborder={0 0 0}}
  \dstyle@tableofcontents
  \hypersetup{pdfborder={0 0 1}}
}
\makeatother

\newtheoremstyle{d_plain}{6pt}{6pt}{\itshape}{}{\bfseries}{.}{0.5em}{}
\newtheoremstyle{d_def}{6pt}{6pt}{}{}{\bfseries}{.}{0.5em}{}
\newtheoremstyle{d_remark}{6pt}{6pt}{}{}{\itshape}{.}{0.5em}{}

\theoremstyle{d_plain}
\newtheorem{theorem}{Theorem}[section]
\newtheorem{lemma}[theorem]{Lemma}
\newtheorem{proposition}[theorem]{Proposition}
\newtheorem{corollary}[theorem]{Corollary}

\theoremstyle{d_def}
\newtheorem{definition}[theorem]{Definition}

\newtheorem{remark}[theorem]{Remark}

\theoremstyle{d_remark}
\newtheorem{notation}[theorem]{Notation}

\makeatletter
\providecommand{\@openbib@code}{}
\newcommand{\dstyle@numericbibitem}[2][]{\dstyle@savedbibitem{#2}}
\renewenvironment{thebibliography}[1]{
  \section*{\refname}
  \small
  \list{\@biblabel{\@arabic\c@enumiv}}{
    \settowidth\labelwidth{\@biblabel{999}}
    \leftmargin\labelwidth
    \advance\leftmargin\labelsep
    \@openbib@code
    \usecounter{enumiv}
    \let\p@enumiv\@empty
    \renewcommand\theenumiv{\@arabic\c@enumiv}
  }
  \sloppy
  \clubpenalty4000
  \@clubpenalty\clubpenalty
  \widowpenalty4000
  \sfcode`\.\@m
  \let\dstyle@savedbibitem\bibitem
  \let\bibitem\dstyle@numericbibitem
}{
  \def\@noitemerr{\@latex@warning{Empty `thebibliography' environment}}
  \endlist
}
\makeatother

\DeclareRobustCommand{\SkipTocEntry}[5]{}

\providecommand{\coheq}{\mathrel{\overset{\mathrm{\square}}{=}}}

\DeclareMathAlphabet{\mathmybb}{U}{bbold}{m}{n}
\newcommand{\1}{\mathmybb{1}}

\newcommand{\bE}{{\mathbb E}}

\newcommand{\mA}{{\mathcal A}}
\newcommand{\mB}{{\mathcal B}}
\newcommand{\mC}{{\mathcal C}}
\newcommand{\mD}{{\mathcal D}}
\newcommand{\mE}{{\mathcal E}}

\newcommand{\mI}{{\mathcal I}}
\newcommand{\mJ}{{\mathcal J}}
\newcommand{\mK}{{\mathcal K}}
\newcommand{\mL}{{\mathcal L}}
\newcommand{\mM}{{\mathcal M}}
\newcommand{\mN}{{\mathcal N}}
\newcommand{\mO}{{\mathcal O}}
\newcommand{\mP}{{\mathcal P}}

\newcommand{\mS}{{\mathcal S}}

\newcommand{\mV}{{\mathcal V}}
\newcommand{\mW}{{\mathcal W}}

\newcommand{\cst}{\operatorname{cst}}

\newcommand{\N}{{\mathrm N}}

\newcommand{\bD}{{\mathbb D}}

\newcommand{\Cat}{\mathsf{Cat}}

\newcommand{\colim}{\mathrm{colim}}

\newcommand{\Fun}{\operatorname{Fun}}

\newcommand{\h}{\mathrm{h}}
\newcommand{\hh}{\mathrm{h}_2}
\newcommand{\Ho}{\operatorname{Ho}}

\newcommand{\Hom}{\mathrm{Hom}}

\newcommand{\id}{\mathrm{id}}

\newcommand{\Grpd}{\mathsf{Grpd}}

\newcommand{\Lan}{\mathrm{Lan}}

\newcommand{\Map}{\operatorname{Map}}

\newcommand{\Mod}{{\mathrm{Mod}}}

\newcommand{\op}{\mathrm{op}}

\newcommand{\Psh}{\operatorname{Psh}}

\newcommand{\Qc}{\text{q}\Cat}

\newcommand{\res}{{\mathrm{res}}}

\newcommand{\Set}{\mathsf{Set}}

\newcommand{\sSet}{\mathsf{sSet}}

\newcommand{\Pro}{\operatorname{Pro}}

\newcommand{\tu}{{\1}}

\newcommand{\y}{\mathsf{y}}

\newcommand{\CAT}{\mathsf{CAT}}

\let\dstyledotlessi\i
\let\dstyledotlessj\j

\renewcommand{\i}{\ifmmode\tilde{i}\else\dstyledotlessi\fi}
\renewcommand{\j}{\ifmmode\tilde{j}\else\dstyledotlessj\fi}

\usepackage{stmaryrd}

\begin{document}

\raggedbottom

\title{ formal weakly enriched category theory}

\author{Giuseppe Leoncini}

\address{Department of Mathematics and Statistics,
Faculty of Science, Masaryk University,
Kotl\'a\v{r}sk\'a 267/2,
611 37 Brno, Czech Republic}

\address{Department of Mathematics ``Federigo Enriques'',
University of Milan,
Via Saldini 50,
20133 Milan, Italy}

\email{giuseppe.leoncini1@unimi.it}

\begin{abstract} 
A formal category theory is constructed (in the form of a proarrow equipment), encoding weak coherent enrichment over a monoidal model category $\mV$. We describe how basic categorical concepts formulated via the equipment translate back to enriched categories. We characterize Dwyer-Kan equivalences of enriched categories as $2$-categorical equivalences.

Specializing to either the Kan-Quillen model structure on simplicial sets, or the Quillen-Serre model structure on topological spaces, we prove that the resulting formal category theory is equivalent to the one associated with the $\infty$-cosmos of quasicategories, thereby extending the formal approach to $(\infty,1)$-categories in the sense of Riehl-Verity to encompass both simplicial and topological categories. 

A notion of classifying object, formulated internally to the equipment of $\mV$-categories, leads to enriched versions of Quillen's Theorem A. 

\end{abstract}

\maketitle
\markboth{}{}

\tableofcontents

\section{Introduction}

We study weak (also known as homotopy-coherent) enrichment through the lens of formal category theory. The process, in particular, supplies a dictionary between the formal weak theory and strict models. 

Our construction does not assume a theory of $(\infty,1)$-categories, but recovers it as a particular case (in a sense which we will make precise), in agreement with the quasicategorical presentation.

\subsubsection{A glimpse at the theme}
Following Lack's classification in the introduction of \cite{Lac10}, bicategories are one of three types of two-dimensional categories, the others being what Lack calls $2$-categories and $\Cat$-categories. Of course, the latter two are exactly the same thing, but different names reflect different ways of thinking about them; while $\Cat$-categories are really strict, $2$-categories are nominally strict but thought of as weak. From the point of view adopted in this paper, $\Cat$-categories are enriched in the trivial homotopy theory of categories (weak equivalences being isomorphisms), while $2$-categories are enriched in the canonical (or folk) homotopy theory of categories, where weak equivalences are equivalences of categories.

Quoting from Lack \cite{Lac10}, \begin{quote} $2$-category theory uses enriched category theory, but not in the simple minded way of $\Cat$-category theory; and it cuts through some of the technical nightmares of bicategories.
\end{quote} The above sentence could be read for any base of enrichment $\mV$ carrying some non-trivial homotopy theory, and we do so in this paper. There is affinity with Lack and Rosicky's work in \cite{LR16}.

A difficulty that arises when trying to model weakly enriched category theory from strict one has to do with cofibrancy; approaches leveraging the model structure on $\mV\Cat$ are complicated by the fact that being cofibrant there is a strong condition.

Our working hypotheses do not require $\mV$-categories $\mC$ to be cofibrant; we do not even need to assume that the model structure on $\mV\Cat$ exists in the first place. We do require of $\mC$ that the hom-objects $\mC(x,y)$ are both fibrant and cofibrant in the base $\mV$ for every $x,y \in \mC$; this much weaker condition seems natural to impose. We do not require that all objects of $\mV$ be cofibrant; $\mV$ is neither assumed locally presentable nor cofibrantly generated.

\begin{center}
\renewcommand{\arraystretch}{1.3} \setlength{\tabcolsep}{0pt}
\begin{tabular}{
  @{}p{0.28\linewidth}
  @{\hspace{0.03\linewidth}}p{0.24\linewidth}
  @{\hspace{0.03\linewidth}}p{0.42\linewidth}@{}
}
\hline
\textbf{\small Base of enrichment}
& \textbf{\small Model structure on it}
& \textbf{\small Formal category theory} \\[3pt]
\hline

$\mathcal V$
& Trivial
& Strictly $\mathcal V$-enriched
\\

$\mathsf{Grpd}$
& Canonical (folk)
& Weak $(2,1)$
\\

$\mathsf{Cat}$
& Canonical (folk)
& Weak $2$
\\

$\mathsf{Gray}$
& Lack \tiny \cite{Lac02} \normalsize
& Weak $3$
\\

$2\mathsf{Cat}_{\mathrm{flex}}$
& Campbell \tiny \cite{Cam26} \normalsize
& Weak $3$
\\

$\mathsf{sSet}^{\Theta_{n-1}^{\mathrm{op}}}$
& Rezk $\Theta_{n-1}\mathsf{Sp}_{k}$
  \tiny \cite{Rez10} \normalsize
& $(n+k,n),$ $\ n\geq1,k\geq0$
\\

$\mathsf{sSet}$
& Kan-Quillen
& Standard $(\infty,1)$
\\

$\mathsf{Top}_{\mathrm{cgwh}}$
& Quillen-Serre
& Standard $(\infty,1)$
\\

$\mathsf{Top}_{\mathrm{cgwh}}$
& Hurewicz-Str{\o}m \tiny \cite{Stro72} \normalsize
& Topological $(\infty,1)$
\\

$\mathsf{sSet}$
& Joyal
& $(\infty,2)$
\\

$\mathsf{sSet}^{\Theta_{n-1}^{\mathrm{op}}}$
& Rezk $\Theta_{n-1}\mathsf{Sp}_{\infty}$
  \tiny \cite{Rez10} \normalsize
& $(\infty,n)$, $n\geq1$
\\

$\mathsf{sSet}^{\Theta_\omega^{\mathrm{op}}}$
& Rezk $\Theta_{\omega}\mathsf{Sp}_{\infty}$
& $(\infty,\infty)$ (inductive)
\\

$\mathsf{sSet}^{t\Delta^{\mathrm{op}}}$ & Ozornova-Rovelli \tiny \cite{OR20} \normalsize & $(\infty,\infty)$ (inductive) \\

$\mathsf{sSet}^{\mathrm{Sm}(S)^{\mathrm{op}}}$
& Morel-Voevodsky
& Motivic
\\

$\mathsf{sSet}^{G}$
& Genuine \tiny \cite{DK84} \normalsize
& Equivariant
\\

$\mathsf{Ch}(R)$
& Hurewicz \tiny \cite{BMR14} \normalsize
& Weak dg up to chain-h-eq
\\

$\mathsf{Ch}(R)$
& Projective
& Weak dg up to quasi-iso
\\

$\Gamma\mS$
& Schwede \tiny \cite{Sch99} \normalsize
& Connective spectral \\

$\mathsf{Sp}^{\Sigma}$
& Stable \tiny \cite{HSS00} \normalsize
& Spectral
\\

$\mathsf{Mod}_E$
& Stable \tiny \cite{SS00} \normalsize
& $E$-linear spectral
\\

\hline
\end{tabular}
\end{center}

All of the examples above satisfy \ref{sec3assumption}. The monoidal structures are cartesian unless specified otherwise. $\mathsf{Gray}$ denotes strict $2$-categories with Gray's tensor product, while $2\mathsf{Cat}_{\mathrm{flex}}$ denotes flexible $2$-categories \cite{Cam26}. $\mathsf{Top}_{\mathrm{cgwh}}$ denotes compactly generated weak Hausdorff spaces. $R$ is a commutative ring, and $\mathsf{Ch}(R)$ denotes unbounded chain complexes with tensor product $\otimes_R$. The Hurewicz weak equivalences are ordinary homotopy equivalences for spaces and chain-homotopy equivalences for complexes. The hypotheses in \ref{sec3assumption} are met by applying \cite[6.12]{BMR14} for spaces and \cite[6.8]{Mos19} for chain complexes. $\mathrm{Sm}(S)$ denotes smooth schemes over $S$. $\Theta_m$ is Joyal's disk category \cite{Joy97}, \cite[1.1]{Rez10}; $\Theta_0=1$, $\Theta_1=\Delta$ and $\Theta_\omega=\bigcup_m\Theta_m$. The fibrant objects of $\Theta_m\mathsf{Sp}_k$ are Rezk's $(m+k,m)$-$\Theta$-spaces, and those of $\Theta_m\mathsf{Sp}_{\infty}$ are $(\infty,m)$-$\Theta$-spaces \cite[1.2]{Rez10}. The model structure $\Theta_{\omega}\mathsf{Sp}_{\infty}$ is obtained similarly by localizing the injective model structure on $\mathsf{sSet}^{\Theta_\omega^{\mathrm{op}}}$ \cite{Rez10}. $\mathsf{Sp}^{\Sigma}$ denotes symmetric spectra in pointed simplicial sets, with smash product $\wedge$; $\mathsf{Mod}_E$ denotes modules over a commutative symmetric ring spectrum $E$, with relative smash product $\wedge_E$. Both use the non-positive stable model structures. $\Gamma\mS$ denotes reduced $\Gamma$-spaces in simplicial sets, with Lydakis' smash product \cite{Sch99}.

\subsubsection{Formal category theory}

The origins of the subject are readily traced back to Gray \cite{Gra66, Gra74} and Street \cite{Str72, Str74, Str74b, Str80a, Str80b}, whose paper on "the formal theory of monads" started the subject and gave it its name, by Lack's historical account \cite{Lac06}. The early days were followed by Street and Walters' work on Yoneda structures \cite{SW78}, which in turn influenced Wood who introduced proarrow equipments \cite{Woo82, Woo85}. We will say a few words about the latter. 

We see equipments as machines allowing us to perform much of category theory in a formal way, see \cite{Woo82,Woo85,Lac10,Kou14,Kou24,AM26}. There is an equipment of strictly enriched category theory, and one of internal categories. Examples of equipments have been found recently in higher categorical contexts by Riehl and Verity and Ruit \cite{RV22,Rui23}. A difference worth remarking is that Ruit works within double Segal spaces (for applications to internal $\infty$-categories), while Riehl and Verity work with virtual double categories. Our work is closely related to Riehl and Verity's and we will say more about it further on. For the relation to $\infty$-equipments, see \Cref{sec:inftyequip}.

While we defer the definitions to \textbf{\Cref{sec:formal-category-theory-equipments}}, for now we describe the elegance of the one object example: an equipment with one object consists of a fully faithful monoidal functor $\mK \hookrightarrow \mM$ with the property that every object of $\mK$ becomes right dualizable in $\mM$. 

The prototypical equipment is $\Cat$; there is a bicategory whose objects are small categories, and $\Hom(\mA , \mB)$ is given by the category of profunctors, also called modules or correspondences, from $\mA$ to $\mB$; these are defined as presheaves $\mA \times \mB^{\mathrm{op}} \to \Set$. Composition is given by coend. Any functor $f \colon \mA \to \mB$ can be seen as the profunctor $(a,b) \mapsto \operatorname{Hom}(b,f(a))$, and every functor thereby becomes dualizable in the sense that it acquires a right adjoint within the bicategory of profunctors, given by $(a,b) \mapsto \Hom\bigl(f(a),b\bigr)$.

\subsubsection{Applications of FCT} There are at least two practical uses of formal category theory. Many statements (such as 'the adjoint functor theorem', or 'left adjoints preserve colimits') can be proven formally once and for all, and then interpreted in each 'type of category theory'. Just as important is the second use: a statement expressed in the common language may be proven leveraging on the special features of one model, and then transported to an equivalent one:

\usetikzlibrary{arrows.meta}

\begin{center}
\begin{tikzpicture}[yscale=0.8,
  every node/.style={align=center},
  equipment/.style={
    double,
    double equal sign distance,
    {Implies}-{Implies},
    shorten <=4pt,
    shorten >=4pt
  }
]
\node (formal) at (0,3.5)
  {Formal language of category theory};

\node (model) at (-3.4,0)
  {Analytic theory\\of  model A};

\node (equivalent) at (3.4,0)
  {Analytic theory\\of model B};

\draw[equipment]
  (formal) -- node[left=8pt] {Equipment A} (model);

\draw[equipment]
  (formal) -- node[right=8pt] {Equipment B} (equivalent);

\draw[<->, shorten <=6pt, shorten >=6pt]
  (model) -- node[below=8pt]
  {Model-invariant theory}
  (equivalent);

\draw[
  double,
  double equal sign distance,
  {Implies}-{Implies}
]
  (-0.65,1.35) -- node[above=6pt]
  {Equivalence}
  (0.65,1.35);
\end{tikzpicture}
\end{center}

It is therefore crucial to be able to compare two different semantics of category theory, so that we can tell when they are equivalent. Riehl and Verity formalize this within Makkai's \emph{first-order logic with dependent sorts} (FOLDS) \cite{Mak95}, specifying a language $\mL$ suitable for their type of equipments \cite[11.3.2 and 11.3.7]{RV22}. Makkai's invariance theorem ensures that $\mL$-equivalent $\mL$-structures satisfy the same formulas in a given context \cite[11.2.25 and 11.2.28]{RV22}. Verdugo addresses the same invariance problem using a different FOLDS language \cite[7.10--7.12]{Ver25}. Both approaches use Makkai's invariance theorem. See \cite{RV22} and \cite{Ver25} for more details, and to section \ref{subsec:equivalences}.

\subsubsection{Overview and results}
\noindent \newline 

\emph{In \textbf{\Cref{th:equipment-M}}, we prove that if $\mV$ is a monoidal model category (satisfying \ref{sec3assumption}), there is a formal category theory exhibited by an equipment whose objects are those $\mV$-categories $\mC$ such that $\mC(x,y)$ is cofibrant-fibrant in $\mV$ for every $x,y \in \mC$.} \\

We have specified what the objects are, but the important thing to tell the reader is what the corresponding notion of profunctor is in our construction. There is in fact more than one option. A natural pick, suggested by the example of $\Cat$, is to consider $\mV$-enriched presheaves $\mA \otimes \mB^{\mathrm{op}} \to \mV$ up to pointwise weak equivalence. These are well known in the dg-context \cite{Kel94}. The other choice, which is key to our proof, is given by what we call \emph{Quillen profunctors} $\mA \nrightarrow \mB$ (which we introduce in \ref{def:representable-Quillen-M}). These are left Quillen $\mV$-functors between the projective model categories $\operatorname{Psh}_{\mV}\bigl(\mA\bigr) \to \operatorname{Psh}_{\mV}\bigl(\mB\bigr)$, considered up to what we call \emph{cofibrant-wise equivalence} (see \ref{def:cofibrant-wise-equivalence-M}). The \emph{weak $\mV$-functors} between $\mV$-categories $\mA \to \mB$ are defined accordingly as those left Quillen functors $\Psh_{\mV}(\mA) \to \Psh_{\mV}(\mB)$ whose restriction on the Yoneda embedding takes values, up to isomorphism in $\Ho \Psh_{\mV}(\mB)$, within the subcategory of representables. In particular, a weak $\mV$-functor is represented by a strict one in a larger codomain.

An equivalent formulation of the existence of the equipment is given in \Cref{prop:equipment-implies-reconstruction-M}. In \Cref{prop:closed-equipment-M} we prove that the equipment admits internal Homs (left and right). These are the contents of \textbf{\Cref{sec:equipment-enriched-infinity-categories}}. 

In \textbf{\Cref{sec:inftyequip}} we show that our equipment is a quotient of an $\infty$-categorical equipment.

It is important to study how categorical notions formulated internally to the equipment translate back to enriched categories; this is what we do in \textbf{\Cref{sec:weakcategorytheory}}. We begin with the formal definition of colimit in an equipment, and prove in \Cref{sec:hocolim} that it recovers Lack-Rosicky's definition of homotopy colimit in an enriched category \cite{LR16}. We also discuss within our framework the fact that strict weighted colimits can be used to compute homotopy colimits \ref{cor:cofibrant-weight-computes-colimit-M}, and we give formulas in \Cref{sec:computecolim}. 

The finality of an arrow and its characterization are formulable in a general equipment. In \Cref{sec:realization} we define, in terms of our structure, the \emph{classifying homotopy type} $$\mathbb{B}_{\mV}(\mC) \in \Ho\mV$$ of a $\mV$-category $\mC$, and more generally the \emph{$\mV$-realization} $\lVert H \rVert_{\mV}$ of a profunctor $H$. With this invariant we study finality as follows, assuming the monoidal unit is terminal in $\mV$: \\

\emph{\textbf{\Cref{th:enriched-Quillen-A-M}.} Let $$F:\mA\to\mC$$ be a weak $\mV$-functor, and assume $\lVert\mC(c,F-)\rVert_{\mV}$ is contractible in $\mV$ for every $c \in \mC$. Then for every weak $\mV$-functor $$D:\mC\to\mE$$ if either $\colim(DF)$ or $\colim D$ exists the other does too, and the comparison map $$\colim(DF)\to \colim D$$ is invertible. Moreover, the map $$\mathbb B_{\mV}F: \mathbb B_{\mV}\mA \to \mathbb B_{\mV}\mC$$ is an isomorphism in $\Ho\mV$.} \\

It recovers the classical Quillen's Theorem A \cite{Qui73} (as well as Joyal's quasi-categorical version \cite[\href{https://kerodon.net/tag/02NY}{02NY}]{Ker}). In \Cref{sec:comutationsclassifying}, we compute $\mathbb{B}_{\mV}$ for various bases of enrichment.

We continue our study with adjunctions, in section \ref{sec:adj}. Finally:\\

\emph{\textbf{\Cref{cor:DK-equivalence-vertical-M}} characterizes Dwyer-Kan equivalences of $\mV$-categories as the $2$-categorical equivalences in the vertical fragment of our equipment $\mV\Cat^h$.}\\

\textbf{\Cref{sec:RV}} is dedicated to proving the equivalence of our simplicial equipment with Riehl and Verity's virtual equipment of quasi-categories. 

In \cite{RV22}, Riehl and Verity show that most models of $\bigl(\infty,1\bigr)$-categories admit a formal category theory encoded in a virtual equipment, a generalization of equipments that does not require horizontal composites to exist but is still expressive enough for many concepts; they prove that the virtual equipments of these models are all equivalent \cite[Theorem~11.1.6]{RV22}, and thus establish a precise dictionary between the associated formal theories. They develop their new foundations of $\bigl(\infty,1\bigr)$-category theory internally to these virtual equipments. The models that fall within the range of applicability of their technology are quasi-categories, complete Segal spaces, Segal categories, and $1$-complicial sets. Many examples of $(\infty,1)$-categories arise in nature as topological or simplicially enriched categories: for instance, all the examples coming from simplicial model categories. We extend the scope of Riehl-Verity's foundations by adding these two models to the picture, proving the following:\\

\emph{\textbf{Theorem. (\ref{thm:comparison-before-strictification},\ref{cor:modelinvariance})} The Riehl-Verity equipment of quasicategories, the equipment of Kan-Quillen enriched simplicial categories, as well as the equipment of Quillen-Serre enriched topological categories, are all equivalent semantic interpretations of the language of formal category theory.}\\

We remark that the equipment of topological categories with respect to Hurewicz-Str\o m model structure is \emph{not} equivalent to the above ones. What it models is a refinement of $(\infty,1)$-category theory where mapping spaces are allowed to be arbitrary topological spaces considered up to classical homotopy equivalence. 

Indeed, for enriching bases $\mV,\mN$ with fibrant monoidal unit, a necessary condition for equivalence of the associated equipments is monoidal equivalence of their homotopy categories $\Ho \mV$ and $\Ho \mN$. In the other direction, at least for nice enough model categories (for example combinatorial and satisfying the monoid axiom), a symmetric monoidal Quillen equivalence $\mV \to \mN$ induces an equivalence of equipments $\mathbb E_{\mV} \to \mathbb E_{\mN}$.

\subsubsection{Timeline and related work}
Weakly enriched categories, also known as enriched $\infty$-categories \cite{GH15}, began to emerge in the 1960s, alongside their strict counterparts. In fact, three of the earliest examples of enriched categories, namely differential graded categories \cite{Kel64,Kel65}, topological categories \cite{EM45}, and $2$-categories \cite{Ben65,Mar65}, all arise in contexts where their hom-objects are most naturally considered up to some notion of weak equivalence other than isomorphism: quasi-isomorphism, weak homotopy equivalence, and equivalence of categories.

In the second half of the 1960s Rainer Vogt, reporting on his joint work with Boardman, records in his PhD thesis \cite{Vog68} a composition operation defined "up to a homotopy, which is itself defined only up to a homotopy, which is itself defined only up to
a homotopy, which is ... ". 
Around the same time, in 1967, B\'enabou introduces bicategories in the Reports of the Midwest Category Seminar \cite{Ben67}, describing them in the opening lines as enriched "up to given coherent isomorphisms".\\ While the Australian school of category theorists began studying the relationship between bicategories and $2$-categories with Street \cite{Str80b}, in the 1980s Dwyer and Kan started to look at simplicial enrichment in order to provide an enhanced version of localization that would carry information in the higher homotopy groups of the hom-objects \cite{DK80,DK80a}; in the same decade Cordier and Porter contributed to founding homotopy-coherent category theory with simplicial categories \cite{CP97}. Grothendieck was also thinking about higher categories in the 1980s, while writing his influential manuscript \cite{Gro83}, where he formulated the homotopy hypothesis. His approach was later pushed further by Maltsiniotis \cite{Mal10}.

From the 1990s, enrichment in linear homotopy types has become relevant in geometry, after Fukaya's $A_{\infty}$-categories \cite{Fuk93} (following Stasheff's $A_{\infty}$-algebras), in relation to Kontsevich's homological mirror symmetry, and the work of Bondal-Kapranov, Keller and Drinfeld in the strict dg-world \cite{BK90,Kel94,Dri04}. In particular, the bicategory of dg-categories and quasi-functors goes back to Keller \cite{Kel94}. Drinfeld \cite[16.6]{Dri04} proves its locally fully faithful inclusion into derived bimodules and the existence of conjoints, thereby producing an equipment. The latter dg-equipment has been recently constructed and applied by Imamura in \cite{Ima24}. Our proof is different, since we need model-categorical methods allowing for the generality of our assumptions in $\ref{sec3assumption}$. It can be shown by \Cref{lem:homotopical-Yoneda-comparison-M} that the two equipments are equivalent. Going back to the history of the differential graded setting, Tabuada and To\"en studied the homotopy theory of dg-categories \cite{Tab05, Toe07}. 

On the homotopy theory of $\mV$-categories, we mention also the work of Lack on the $2$-categorical case \cite{Lac02}; Bergner (following Dwyer and Kan) took care of the simplicial one \cite{Ber07}, while Lurie, Berger-Moerdijk and Muro extended this development to a general base \cite{HTT,BM13,Mur15}. 

Around the same years, in motivic geometry, enriched methods were deployed in the study of generalized cohomology theories, after Dundas, Röndigs and Østvær \cite{DRO03,DRO03b}.

Higher categories are a particular case with a history on its own; in addition to the work of the Australian school with the tricategories of Gordon-Power-Street \cite{GPS95} and the complicial approach of Roberts, Street and Verity \cite{Str87,Str03,Ver08}, we must mention Batanin \cite{Bat98}, Batanin-Cisinski-Weber \cite{BCW13}, Leinster and Trimble \cite{Lei02}, the combinatorial models of Tamsamani, Barwick, Ara, Rezk, Paoli and Ozornova-Rovelli \cite{Tam99, Bar05, Ara14, Rez01,Rez10,Pao19,OR20}, and the cubical approach \cite{DKLS24, CKM25}. Weighted (co)limits in $(\infty,n)$-categories are studied by Moser-Rovelli-Rasekh \cite{MRR26,MRR24}. Our approach, while coming from a different angle, is compatible with theirs, given the results in \cite{MRR26}.

Enriched Segal categories were defined by Pellissier and Simpson \cite{Pel02,Sim12}. More recently in 2010 Bacard extended the approach to non-cartesian bases \cite{Bac10}. Later, Lowen and Mertens \cite{LM25} introduced enriched quasicategories. 

Internally to Joyal's quasicategory theory \cite{Joy02}, and within the broader context of Lurie's $\infty$-operads and higher algebra, various definitions of enrichment were introduced by Lurie and Gepner-Haugseng \cite{Lur07,GH15}, followed by Hinich \cite{Hin20}, who also treats weighted colimits \cite{Hin23}, while bi-enrichment is due to Heine \cite{Hei24}, with an account of weighted colimits \cite{Hei24b} within Lurie's definition of enrichment. Our approach is independent to Heine's, as well as to Hinich's version \cite{Hin23}, and the compatibility with those frameworks will be shown in forthcoming work. Still within the framing of Lurie's higher algebra, another definition in terms of marked modules is proposed by Reutter-Zetto in \cite{RZ25}. The homotopy theory of $\mV$-categories enriched in a nice enough model structure is shown by Haugseng's \cite{Hau15} to be equivalent to the one of enriched $\infty$-categories in the sense of \cite{GH15}.

\subsubsection{Acknowledgements}
This paper is written as part of a larger project that will be the author's PhD thesis. My deep gratitude goes to my PhD advisors, Paul Arne Østvær and Jiří Rosický.

I thank the organizers and scientific committees of CT24 in Santiago and CT26 in Baltimore for giving me the opportunity to speak about this project. I also wish to thank Emily Riehl for explaining to me her view of model-independent $(\infty,1)$-category during a conversation at CT25.

The support of GACR through the grant GA22-02964S, \emph{Enriched categories and their application} is acknowledged.

\section{Equipments}
\label{sec:formal-category-theory-equipments}

We begin introducing Wood's definition of equipment \cite{Woo82}. Through this section \emph{bicategory} is a synonym for \emph{weak $2$-category}, whereas $2$\emph{-category} is a shorthand for a category strictly enriched in the $1$-category of categories $\bigl(\Cat,\times,1 \bigr)$.

\begin{definition}\label{def:woodequip}
An \emph{equipment} on a $2$-category $\mK$ comprises the following structure: a bicategory $\mM$, sharing the same objects, together with a pseudofunctor $$(-)_*:\mK\to \mM$$ satisfying the following properties:
\begin{itemize}
\item[(i)] $(-)_*$ is the identity on objects;
\item[(ii)] for all objects $A,B$, the functor $$(-)_*:\mK(A,B)\to \mM(A,B)$$ is fully faithful;
\item[(iii)] for every arrow $f$ in $\mK$, the arrow $f_*$ has a right adjoint $f^\vee$ in the bicategory $\mM$.
\end{itemize}
\end{definition}
In Wood's original definition, not only $\mM$ but also $\mK$ is allowed to be a bicategory rather than a $2$-category, but we will not need such a generality and in fact, in all except one examples in this paper, $\mM$ will be a $2$-category as well.

\subsection{Companions and conjoints} We shall use the double-categorical manifestation of equipments introduced by Verity \cite{Ver92}. \subsubsection{Pseudo double categories} First, recall that a \emph{pseudo double category} is a structure comprising objects, vertical arrows, horizontal arrows, and squares $$\begin{tikzcd}[column sep=3em,row sep=2.4em] A \ar[r,"X"] \ar[d,"u"'] & B \ar[d,"v"] \\ C \ar[r,"Y"'] & D \arrow[phantom,from=1-1,to=2-2,"\Downarrow\varphi" description] \end{tikzcd}$$ with vertical arrows $u,v$ and horizontal arrows $X,Y$. We use $\to$ for vertical arrows and $\nrightarrow$ for horizontal arrows; while the former compose strictly, and squares also compose vertically strictly, the horizontal arrows and squares compose horizontally in a way associative and unital only up to coherent isomorphism. The horizontal and vertical compositions of squares in a pseudo double category must satisfy the \emph{interchange law}: given four composable squares as in $$\begin{tikzcd}[column sep=3em,row sep=2.4em] \bullet \ar[r] \ar[d] & \bullet \ar[r] \ar[d] & \bullet \ar[d] \\ \bullet \ar[r] \ar[d] & \bullet \ar[r] \ar[d] & \bullet \ar[d] \\ \bullet \ar[r] & \bullet \ar[r] & \bullet \arrow[phantom,from=1-1,to=2-2,"\Downarrow\alpha" description] \arrow[phantom,from=1-2,to=2-3,"\Downarrow\beta" description] \arrow[phantom,from=2-1,to=3-2,"\Downarrow\gamma" description] \arrow[phantom,from=2-2,to=3-3,"\Downarrow\delta" description] \end{tikzcd}$$ one can first paste horizontally the squares in the top and bottom halves and then compose vertically the results, or first compose vertically the left and right halves and then compose horizontally the results.

\begin{notation}
If $\bD$ is a pseudo double category, we obtain a $2$-category by remembering only its \emph{vertical fragment}, and a bicategory by remembering only its \emph{horizontal fragment}. More precisely, let $V(\bD)^{\mathrm{co}}$ be the $2$-category whose objects are those of $\bD$, whose arrows are the vertical arrows of $\bD$, and whose $2$-cells $u\Rightarrow v$ are squares in $\bD$ whose top and bottom are horizontal identities: $$\begin{tikzcd}[column sep=3em,row sep=2.4em] A \ar[r,equal] \ar[d,"v"'] & A \ar[d,"u"] \\ B \ar[r,equal] & B \arrow[phantom,from=1-1,to=2-2,"\Downarrow\theta" description] \end{tikzcd}$$ We let $H(\bD)$ be the bicategory whose objects are those of $\bD$, whose arrows are the horizontal arrows of $\bD$, and whose $2$-cells are squares $$\begin{tikzcd}[column sep=3em,row sep=2.4em] A \ar[r,"J"] \ar[d,equal] & B \ar[d,equal] \\ A \ar[r,"K"'] & B \arrow[phantom,from=1-1,to=2-2,"\Downarrow\alpha" description] \end{tikzcd}$$
\end{notation}

\subsubsection{Companions and conjoints in a double category}
\begin{notation}
    We write $P\coheq Q$ when we omit the horizontal associators and unitors.
\end{notation}
\begin{definition}
Let $u:A\to B$ be a vertical arrow in a pseudo double category $\bD$. A \emph{companion} of $u$ is a horizontal arrow $$u_*:A\nrightarrow B$$ such that there exist squares $$\begin{tikzcd}[column sep=3em,row sep=2.4em] A \ar[r,"u_*"] \ar[d,"u"'] & B \ar[d,equal] \\ B \ar[r,equal] & B \arrow[phantom,from=1-1,to=2-2,"\Downarrow\alpha_u" description] \end{tikzcd} \qquad \begin{tikzcd}[column sep=3em,row sep=2.4em] A \ar[r,equal] \ar[d,equal] & A \ar[d,"u"] \\ A \ar[r,"u_*"'] & B \arrow[phantom,from=1-1,to=2-2,"\Downarrow\beta_u" description] \end{tikzcd}$$ where $$\begin{tikzcd}[column sep=3em,row sep=2.4em] A \ar[r,equal] \ar[d,equal] & A \ar[r,"u_*"] \ar[d,"u"] & B \ar[d,equal] \\ A \ar[r,"u_*"'] & B \ar[r,equal] & B \arrow[phantom,from=1-1,to=2-2,"" description] \arrow[phantom,from=1-2,to=2-3,"" description] \end{tikzcd} \coheq \begin{tikzcd}[column sep=3em,row sep=2.4em] A \ar[r,"u_*"] \ar[d,equal] & B \ar[d,equal] \\ A \ar[r,"u_*"'] & B \arrow[phantom,from=1-1,to=2-2,"\Downarrow \id" description] \end{tikzcd}$$ and $$\begin{tikzcd}[column sep=3em,row sep=2.4em] A \ar[r,equal] \ar[d,equal] & A \ar[d,"u"] \\ A \ar[r,"u_*"'] \ar[d,"u"'] & B \ar[d,equal] \\ B \ar[r,equal] & B \arrow[phantom,from=1-1,to=2-2,"" description] \arrow[phantom,from=2-1,to=3-2,"" description] \end{tikzcd} = \begin{tikzcd}[column sep=3em,row sep=2.4em] A \ar[r,equal] \ar[d,"u"'] & A \ar[d,"u"] \\ B \ar[r,equal] & B \arrow[phantom,from=1-1,to=2-2,"\Downarrow\id" description] \end{tikzcd}.$$
\end{definition}

\begin{definition}
Let $u:A\to B$ be a vertical arrow in a pseudo double category $\bD$. A \emph{conjoint} of $u$ is a horizontal arrow $$u^\vee:B\nrightarrow A$$ such that there are counit and unit squares $$\begin{tikzcd}[column sep=3em,row sep=2.4em] B \ar[r,"u^\vee"] \ar[d,equal] & A \ar[d,"u"] \\ B \ar[r,equal] & B \arrow[phantom,from=1-1,to=2-2,"\Downarrow\varepsilon_u" description] \end{tikzcd} \qquad \begin{tikzcd}[column sep=3em,row sep=2.4em] A \ar[r,equal] \ar[d,"u"'] & A \ar[d,equal] \\ B \ar[r,"u^\vee"'] & A \arrow[phantom,from=1-1,to=2-2,"\Downarrow\eta_u" description] \end{tikzcd}$$ and the triangle identities hold: $$\begin{tikzcd}[column sep=3em,row sep=2.4em] B \ar[r,"u^\vee"] \ar[d,equal] & A \ar[r,equal] \ar[d,"u"] & A \ar[d,equal] \\ B \ar[r,equal] & B \ar[r,"u^\vee"'] & A \arrow[phantom,from=1-1,to=2-2,"" description] \arrow[phantom,from=1-2,to=2-3,"" description] \end{tikzcd} \coheq \begin{tikzcd}[column sep=3em,row sep=2.4em] B \ar[r,"u^\vee"] \ar[d,equal] & A \ar[d,equal] \\ B \ar[r,"u^\vee"'] & A \arrow[phantom,from=1-1,to=2-2,"\Downarrow \id" description] \end{tikzcd}$$

$$\begin{tikzcd}[column sep=3em,row sep=2.4em] A \ar[r,equal] \ar[d,"u"'] & A \ar[d,equal] \\ B \ar[r,"u^\vee"'] \ar[d,equal] & A \ar[d,"u"] \\ B \ar[r,equal] & B \arrow[phantom,from=1-1,to=2-2,"" description] \arrow[phantom,from=2-1,to=3-2,"" description] \end{tikzcd} = \begin{tikzcd}[column sep=3em,row sep=2.4em] A \ar[r,equal] \ar[d,"u"'] & A \ar[d,"u"] \\ B \ar[r,equal] & B \arrow[phantom,from=1-1,to=2-2,"\Downarrow\id" description] \end{tikzcd}.$$
\end{definition}

\subsubsection{From equipments to equipments} Given $(-)_*:\mK\to\mM$ an equipment in the sense of definition \ref{def:woodequip}, there is a pseudo double category $\bD$ such that $V(\bD)^{\mathrm{co}}\cong\mK$ and $H(\bD)\cong\mM$. The unit and counit of $u_*\dashv u^\vee,$ together with the triangle identities, satisfy the definition for a conjoint square. Conversely, let $\bD$ be a pseudo double category. The (unique up to contractible groupoid) choice of companion and conjoint for every vertical arrow $u$ determines a pseudofunctor $(-)_*:V(\bD)^{\mathrm{co}}\to H(\bD) .$ For vertical arrows $u,v:A\to B$ there is a bijection $V(\bD)^{\mathrm{co}}(u,v) \cong H(\bD)(u_*,v_*),$ given by $$\begin{tikzcd}[column sep=3em,row sep=2.4em] A \ar[r,equal] \ar[d,"v"'] & A \ar[d,"u"] \\ B \ar[r,equal] & B \arrow[phantom,from=1-1,to=2-2,"\Downarrow\theta" description] \end{tikzcd} \quad\to\quad \begin{tikzcd}[column sep=3em,row sep=2.4em] A \ar[r,"u_*"] \ar[d,equal] & B \ar[d,equal] \\ A \ar[r,"v_*"'] & B \arrow[phantom,from=1-1,to=2-2,"\Downarrow \theta_*" description] \end{tikzcd}$$ where $$\begin{tikzcd}[column sep=3em,row sep=2.4em] A \ar[r,"u_*"] \ar[d,equal] & B \ar[d,equal] \\ A \ar[r,"v_*"'] & B \arrow[phantom,from=1-1,to=2-2,"\Downarrow \theta_*" description] \end{tikzcd} :\coheq \begin{tikzcd}[column sep=3em,row sep=2.4em] A \ar[r,equal] \ar[d,equal] & A \ar[r,equal] \ar[d,"v"] & A \ar[r,"u_*"] \ar[d,"u"] & B \ar[d,equal] \\ A \ar[r,"v_*"'] & B \ar[r,equal] & B \ar[r,equal] & B \arrow[phantom,from=1-1,to=2-2,"\Downarrow\beta_v" description] \arrow[phantom,from=1-2,to=2-3,"\Downarrow\theta" description] \arrow[phantom,from=1-3,to=2-4,"\Downarrow\alpha_u" description] \end{tikzcd}$$ Finally, the conjoint squares exhibit the adjunction $u_*\dashv u^\vee$.

\subsection{Equivalences}\label{subsec:equivalences}
A \emph{morphism of equipments} is a double pseudofunctor, pseudo in both directions, following Shulman's definition \cite[6.1]{Shu11}. Such morphisms preserve companions, conjoints and mate correspondences \cite[6.7 and 6.9]{Shu11}.

\begin{definition}\label{def:equipequivalence}
A morphism $F:\mathbb D\to\mathbb E$ is an \emph{equipment biequivalence} if:
\begin{itemize}
\item[$(w1)$] Every object of $\mathbb E$ is vertically equivalent to some $FA$.
\item[$(w2)$] Every vertical arrow $FA\to FB$ is isomorphic to $Fu$ for some vertical $u:A\to B$.
\item[$(w3')$] Every horizontal arrow $FA\nrightarrow FB$ is isomorphic to $FJ$ for some horizontal $J:A\nrightarrow B$.
\item[$(w4)$] The action on squares with any prescribed boundary is bijective.
\end{itemize}
\end{definition}

Isomorphisms in $(w2)$ and $(w3')$ are taken in the respective vertical and horizontal fragments. By the companion correspondence, these conditions are equivalent to asking that both induced pseudofunctors on the vertical and horizontal fragments are biequivalences.

\Cref{def:equipequivalence} agrees with Riehl-Verity's virtual-biequivalence criterion \cite[Theorem 11.1.6]{RV22}.

For strict morphisms between strict equipments, \ref{def:equipequivalence}  is Verdugo's criterion \cite[4.12 and 4.16]{Ver25}, following Moser-Sarazola-Verdugo \cite{MSV22} with the directions exchanged \cite[Remark~4.14]{Ver25}. On strict equipments it agrees with Campbell's \emph{gregarious double equivalences} \cite[5.5]{CL26}. The latter were advocated as the ``correct'' notion of equivalence of double categories by Sarazola in her lecture at the 2025 Category Theory Conference in Brno \cite{Sar25}.

\begin{proposition}\label{prop:RVstructureequiv}
Let $\mL_{RV}$ be Riehl-Verity's language \cite[11.3.2]{RV22}. An equipment biequivalence $F \colon \mathbb D \to \mathbb E$ induces an equivalence of $\mL_{RV}$-structures \cite[11.2.25 and 11.3.7]{RV22}.
\end{proposition}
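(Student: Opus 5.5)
The plan is to reduce the statement to Riehl--Verity's own criterion. Their Theorem 11.1.6 in \cite{RV22} states that a functor of virtual equipments which is a \emph{virtual biequivalence} induces an $\mL_{RV}$-equivalence of the associated $\mL_{RV}$-structures, in the sense of \cite[11.2.25 and 11.3.7]{RV22}. So I will first pass from pseudo double categories to virtual equipments and then check the conditions of that criterion.

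\textbf{Step 1: pass to virtual equipments.} Every pseudo double category $\mathbb D$ with companions and conjoints has an underlying virtual equipment $U\mathbb D$. Its objects, vertical arrows and horizontal arrows are those of $\mathbb D$. A multicell with source a composable string $J_1,\dots,J_n$ is a square in $\mathbb D$ whose top boundary is a chosen horizontal composite $J_1\odot\cdots\odot J_n$; for $n=0$ the top is the horizontal unit. A morphism of equipments $F$, being pseudo in both directions, induces a functor $UF$ of virtual double categories. To define it, compose $F$ on squares with the comparison isomorphisms $F J_1\odot\cdots\odot FJ_n\cong F(J_1\odot\cdots\odot J_n)$. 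Since these comparisons are coherent, $UF$ respects multicell composition. By \cite[6.7, 6.9]{Shu11}, $UF$ preserves companions, conjoints and hence restrictions, so it is a morphism of virtual equipments.

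\textbf{Step 2: check the virtual-biequivalence conditions.} Essential surjectivity on objects up to equivalence is $(w1)$. Local essential surjectivity on vertical and horizontal arrows is $(w2)$ and $(w3')$. The remaining condition is bijectivity on multicells with any prescribed boundary, and this is the \textbf{main obstacle}, since $(w4)$ only speaks of squares with a single top horizontal arrow. I will handle it by the composite-and-transport argument. A multicell in $U\mathbb E$ with boundary $FJ_1,\dots,FJ_n$, $Fu$, $Fv$, $FK$ corresponds bijectively to a square with top $FJ_1\odot\cdots\odot FJ_n$. Pasting with the invertible comparison globular square turns this into a square with top $F(J_1\odot\cdots\odot J_n)$. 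By $(w4)$, such squares are in bijection with squares of $\mathbb D$ with top $J_1\odot\cdots\odot J_n$, and those are exactly the multicells of $U\mathbb D$. One must check that this composite bijection agrees with the action of $UF$, which holds by construction of $UF$ in Step 1.

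\textbf{Step 3: boundaries not strictly in the image of $F$.} The criterion of \cite[11.1.6]{RV22} may require bijectivity only over boundaries lying in the image of $F$; if it asks for more, I reduce to that case. Since $F$ is essentially surjective on arrows, any boundary is related to one in the image by invertible vertical and globular cells, and pasting with these is bijective on cells. Finally, I invoke \cite[Theorem 11.1.6]{RV22} to conclude that $UF$ induces an equivalence of $\mL_{RV}$-structures. For the nullary case $n=0$, the same argument applies with the unit comparison isomorphism $\mathrm{id}_{FA}\cong F(\mathrm{id}_A)$.
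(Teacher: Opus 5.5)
Your Steps 1 and 2 are fine as far as they go. Transporting multicells along the comparison isomorphisms is exactly why Definition~\ref{def:equipequivalence} matches the Riehl--Verity virtual-biequivalence conditions.

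The gap is the final step, which is where all the content lies. The paper cites \cite[Theorem~11.1.6]{RV22} as the result that a cosmological biequivalence of $\infty$-cosmoi induces a biequivalence of virtual equipments of modules. Read that way, it does not apply to arbitrary functors of virtual equipments, and it does not conclude an $\mL_{RV}$-equivalence.

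An $\mL_{RV}$-equivalence \cite[11.2.25]{RV22} is a \emph{span of very surjective morphisms} of $\mL_{RV}$-structures. Each leg must be a strict morphism, preserving the identity and composition relations on the nose. Each leg must also be surjective on the nose on every sort over every prescribed boundary: objects, vertical and horizontal arrows with given endpoints, and cells with given boundary.

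Your $UF$ meets these surjectivity conditions only up to equivalence on objects and up to isomorphism on arrows. Moreover, a morphism of equipments is pseudo in both directions, so $UF$ need not even be a strict morphism of $\mL_{RV}$-structures. Your Step 3, pasting with invertible cells, only handles the cell sorts. It does not produce on-the-nose preimages of objects and arrows.

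What is missing is an intermediate $\mL_{RV}$-structure mapping very surjectively to both $\mathbb D$ and $\mathbb E$. This would be a Makkai-style ``graph'' of $F$, whose objects carry chosen equivalences $FA\simeq X$, and so on. You would also need to verify that its two legs respect and reflect the relations.

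The paper gets this span from outside rather than building it by hand. It passes to the horizontal strictifications $\mathbb D^{\mathrm{str}}\to\mathbb D$ and $\mathbb E^{\mathrm{str}}\to\mathbb E$ \cite{GP99,Cam19}, which induce $\mL_{RV}$-equivalences by construction. This yields an equipment biequivalence $\mathbb D^{\mathrm{str}}\to\mathbb D\to\mathbb E\to\mathbb E^{\mathrm{str}}$ between strict equipments. It then applies Campbell--Leinster \cite[5.8]{CL26} to replace this by a span $\mathbb D^{\mathrm{str}}\leftarrow\mathbb C\rightarrow\mathbb E^{\mathrm{str}}$ of strict surjective equivalences, which satisfy Makkai's definition. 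To complete your argument, you need either such a span-replacement result for $UF$ or to follow this strictification route.
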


\begin{proof}
Let $\mathbb{E}^{\operatorname{str}}  \to \mathbb{E}$ and $\mathbb{D}^{\operatorname{str}}  \to \mathbb{D}$ be the horizontal strictifications \cite{GP99,Cam19} of $\mathbb E$ and $\mathbb D$. Both these equivalences induce equivalences of $\mL_{RV}$-structures by construction. We now have an equipment equivalence $\mathbb{D}^{\operatorname{str}}  \to \mathbb{D} \to \mathbb{E} \to  \mathbb E^{\operatorname{str}} $ between strict equipments, and we can therefore apply Campbell-Leinster's \cite[5.8]{CL26} which provides us (in their terminology) with a span of strict surjective equivalences between strict equipments $\mathbb{D}^{\operatorname{str}}  \leftarrow \mathbb{C} \rightarrow  \mathbb{E}^{\operatorname{str}} $, which satisfies Makkai's definition of equivalence of $\mL$-structures \cite[11.2.25]{RV22}.
\end{proof}

\begin{proposition}\label{Verdugostructureequiv}
Let $\mL_{V}$ be Verdugo's language \cite[7.10]{Ver25}. If $\mathbb{D}$ and $\mathbb E$ are both strict double categories, an equipment biequivalence $F \colon \mathbb D \to \mathbb E$ induces an equivalence of $\mL_{V}$-structures \cite[6.31]{Ver25}.
\end{proposition}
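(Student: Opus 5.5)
The plan follows the proof of \Cref{prop:RVstructureequiv}, except that no strictification step is needed here, because $\mathbb D$ and $\mathbb E$ are already strict. Since $F$ is a morphism of equipments in the sense of \Cref{subsec:equivalences}, it is a double pseudofunctor, pseudo in both directions. I would first check whether $F$ can be replaced by a strict double functor between the strict double categories $\mathbb D$ and $\mathbb E$, or at least connected to one by a zig-zag, so that Verdugo's notion of equivalence of $\mL_V$-structures \cite[6.31]{Ver25} applies.

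The cleanest route is the one already used. By Campbell-Leinster \cite[5.8]{CL26}, an equipment biequivalence between strict equipments yields a span $\mathbb D \leftarrow \mathbb C \rightarrow \mathbb E$ of strict surjective equivalences between strict double categories. As recalled after \Cref{def:equipequivalence}, on strict equipments our conditions $(w1)$--$(w4)$ agree with Campbell's gregarious double equivalences, so \cite[5.8]{CL26} applies to $F$ directly. I will need to confirm that the apex $\mathbb C$ is again a strict equipment, so that it is an $\mL_V$-structure in Verdugo's sense. Companions and conjoints should lift along a surjective equivalence, since such a map is surjective on objects and vertical arrows and bijective on squares with prescribed boundary. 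Alternatively, $\mathbb C$ only needs to be a strict double category if $\mL_V$ does not axiomatize the equipment structure.

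Next, each leg of the span should be a very surjective morphism of $\mL_V$-structures. I will unwind Verdugo's FOLDS signature \cite[7.10]{Ver25}: its sorts are objects, vertical arrows, horizontal arrows and squares (with the relevant equality and composition predicates), layered by dependency. A strict surjective equivalence is surjective on objects. Over any fixed boundary it is surjective on vertical and horizontal arrows, and it is bijective on squares, including the composition and identity relation sorts, which are detected by squares. This is exactly Makkai's fibration-plus-surjectivity condition. A span of very surjective morphisms is, by definition, an $\mL_V$-equivalence.

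The main obstacle is matching Verdugo's precise language and notion of equivalence with the output of \cite[5.8]{CL26}. In particular, if $\mL_V$ contains sorts for chosen companions or conjoints, or for equalities of arrows, surjectivity on those sorts must be checked by hand. I would first try to use Verdugo's own comparison \cite[4.12, 4.16, 7.11]{Ver25}, which says his equivalences coincide with ours for strict morphisms. I would then reduce the pseudo case to the strict case via the span, so that every leg is strict and his criterion applies leg by leg.
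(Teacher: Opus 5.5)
Your proposal matches the paper's proof: since $\mathbb D$ and $\mathbb E$ are already strict, the paper applies Campbell--Leinster \cite[5.8]{CL26} directly to get a span of strict surjective equivalences between strict equipments, and concludes by Makkai's definition of an equivalence of $\mL_V$-structures as a span of very surjective morphisms, exactly as you do. The checks you single out (that the apex is again an equipment, and that each leg is surjective on every sort of $\mL_V$, including the relation sorts) are not carried out in the paper, which leaves them implicit.
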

\begin{proof}
   Campbell-Leinster's \cite[5.8]{CL26} again provides us with a span of strict surjective equivalences between strict equipments and therefore the conclusion follows in the same way as above by Makkai's definition of equivalence of $\mL$-structures.
\end{proof}
\section{The equipment of weak category theory}
\label{sec:equipment-enriched-infinity-categories}
\subsubsection{Assumptions on $\mV$} \label{sec3assumption} Throughout this paper, $\mV$ denotes a symmetric monoidal model category with cofibrant unit. We call a $\mV$-category \emph{locally cofibrant} (respectively \emph{locally fibrant}) when every hom-object $\mC(x,y)$ is cofibrant (respectively fibrant) in $\mV$. We further assume that for every locally cofibrant small $\mV$-category $\mC$, the $\mV$-functor category $\mV \Fun(\mC,\mV)$ admits the projective model structure; very general existence theorems for the enriched projective model structure are established in \cite{DRO03}[4.2], \cite{Shu06}[24.4], and \cite{Mos19}[4.4; 6.5], covering all examples of interest.
\subsubsection{Some lemmas}\label{equipmentprelim} 
We recall that a \emph{$\mV$-model category} $\mathcal N$ is a $\mV$-enriched, $\mV$-tensored, and $\mV$-cotensored category, equipped with a model structure (on its underlying category), such that: for every cofibration $i:K\to L$ in $\mV$ and every cofibration $j:X\to Y$ in $\mN$, the pushout-product map $$(L\otimes X)\coprod_{K\otimes X}(K\otimes Y) \to L\otimes Y$$ is a cofibration in $\mN$, and it is a trivial cofibration if either $i$ or $j$ is a weak equivalence. We write $\Psh_{\mV}(\mA) = \mV\Fun(\mA^{\op},\mV)$ and we assume it endowed with the projective model structure.

\begin{lemma}
\label{lem:evaluation-both-quillen-M}
Let $\mA$ be a small and locally cofibrant $\mV$-category, and let $\mathcal N$ be a $\mV$-model category. For every object $a\in\mA$, the evaluation functor $$ev_a: \mV\Fun(\mA,\mN)_{\mathrm{proj}} \to \mN$$ is both left and right Quillen.
\end{lemma}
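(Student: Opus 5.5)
Right Quillen is immediate from the definition of the projective model structure: weak equivalences and fibrations in $\mV\Fun(\mA,\mN)_{\mathrm{proj}}$ are detected objectwise, so $ev_a$ preserves fibrations and trivial fibrations. It also has a left adjoint, namely the enriched left Kan extension along $a\colon \tu\to\mA$, given by $X\mapsto \mA(a,-)\otimes X$. So the content of the statement is the left Quillen part. Since $ev_a$ is a $\mV$-functor between tensored and cotensored categories and preserves colimits (computed pointwise), it has a right adjoint $R_a$, given by the enriched right Kan extension $R_a(Y)=\{\mA(-,a),Y\}$, i.e.\ $b\mapsto Y^{\mA(b,a)}$, the cotensor in $\mN$. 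This gives the adjunction $ev_a\dashv R_a$.

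I will show that $R_a$ is right Quillen, i.e.\ preserves fibrations and trivial fibrations. Because these are objectwise in the projective structure, it suffices to check that for each $b\in\mA$ the functor $Y\mapsto Y^{\mA(b,a)}$ preserves fibrations and trivial fibrations of $\mN$. This is exactly where local cofibrancy of $\mA$ enters. The $\mV$-model category axiom (the pushout-product axiom) is equivalent by adjunction to the pullback-cotensor axiom: for a cofibration $i\colon K\to L$ in $\mV$ and a fibration $p\colon Y\to Z$ in $\mN$, the map $Y^L\to Y^K\times_{Z^K}Z^L$ is a fibration, and it is trivial if $i$ or $p$ is. Taking $K=\emptyset$ (initial object) and $L=\mA(b,a)$, which is cofibrant, the map $\emptyset\to\mA(b,a)$ is a cofibration. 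Since $Y^{\emptyset}=*$ is terminal, the pullback-cotensor map reduces to $p^{\mA(b,a)}\colon Y^{\mA(b,a)}\to Z^{\mA(b,a)}$. Hence it is a fibration, and a trivial one when $p$ is.

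The only care needed is the equivalence between the pushout-product and pullback-cotensor formulations. I would do this by a standard lifting-property adjunction argument, and check that $Y^{\emptyset}$ is terminal, which follows since $(-)^{K}$ turns the colimit $\emptyset$ into a limit. Neither step is a real obstacle. Symmetrically, one could instead verify directly that the left adjoint $\mA(a,-)\otimes-$ of $ev_a$ sends (trivial) cofibrations to projective (trivial) cofibrations; this is automatic from generating-cofibration descriptions, but the cotensor argument avoids assuming cofibrant generation, which matters since $\mV$ is not assumed cofibrantly generated.
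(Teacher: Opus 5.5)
Your proposal is correct and follows the paper's proof. Right Quillen comes from the objectwise definition of projective (trivial) fibrations. Left Quillen comes from showing that the right adjoint $Y\mapsto Y^{\mA(-,a)}$ preserves fibrations and trivial fibrations, using cofibrancy of each $\mA(b,a)$ together with the $\mV$-model category axiom. The only difference is that you spell out the pullback-cotensor reduction with $K=\emptyset$, which the paper leaves implicit.
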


\begin{proof}
 By definition of projective (trivial) fibrations $ev_a$ is right Quillen. To prove it is left Quillen, we show that its right adjoint is also right Quillen. Such right adjoint sends $Y\in\mathcal N$ to the $\mV$-functor whose value at $b$ is given by the cotensor $Y^{\mA(b,a)}.$ By assumption every $\mA(b,a)$ is cofibrant; hence the fact that $\mathcal N$ is a $\mV$-model category guarantees that cotensoring with $\mA(b,a)$ preserves fibrations and trivial fibrations.
\end{proof}

\begin{corollary}
\label{cor:projectively-cofibrant-objectwise-cofibrant-M}
Under the hypotheses of \Cref{lem:evaluation-both-quillen-M}, every projectively cofibrant enriched diagram is objectwise cofibrant.
\end{corollary}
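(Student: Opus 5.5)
The corollary follows directly from \Cref{lem:evaluation-both-quillen-M}: a left Quillen functor preserves cofibrations, and in particular it preserves cofibrant objects. Let $X\in\mV\Fun(\mA,\mN)_{\mathrm{proj}}$ be projectively cofibrant, so that the map $\emptyset\to X$ from the initial object is a projective cofibration. Fix $a\in\mA$. By \Cref{lem:evaluation-both-quillen-M}, $ev_a$ is left Quillen and therefore sends $\emptyset\to X$ to a cofibration $ev_a(\emptyset)\to X(a)$ in $\mN$.

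It remains to check that $ev_a(\emptyset)$ is the initial object of $\mN$. Since $ev_a$ is a left adjoint, it preserves colimits, and in particular the initial object. Concretely, colimits in the enriched functor category are computed pointwise, so the initial diagram is constant at the initial object of $\mN$. Hence $X(a)$ is cofibrant in $\mN$ for every $a$, which is the claim.

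The case of interest is $\mN=\mV$. This is a $\mV$-model category because $\mV$ is a symmetric monoidal model category, so the pushout-product axiom holds and $\mV$ is closed, tensored and cotensored over itself. Applied to $\mA^{\op}$, which is again small and locally cofibrant, the argument shows that projectively cofibrant presheaves in $\Psh_{\mV}(\mA)$ are objectwise cofibrant.

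There is no genuine obstacle here. The only content lies in the left Quillen half of \Cref{lem:evaluation-both-quillen-M}, which is where local cofibrancy of $\mA$ is used, and that is already established.
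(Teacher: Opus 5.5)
Your proposal is correct and is exactly the argument the paper intends: the paper gives no separate proof, treating the corollary as immediate from \Cref{lem:evaluation-both-quillen-M}, since the left Quillen functor $ev_a$ preserves cofibrant objects. Your check that $ev_a$ preserves the initial object, and your remark about specializing to $\mN=\mV$ and $\mA^{\op}$, are correct and simply spell out what the paper leaves implicit.
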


\begin{lemma}[]
\label{lem:cofibrant-weight-homotopy-invariance}
Under the hypotheses of \Cref{lem:evaluation-both-quillen-M}, let $P\in\Psh_{\mV}(\mA)$ be projectively cofibrant. If $ u:X\to Y $ is an objectwise weak equivalence between objectwise cofibrant enriched functors $X,Y:\mA\to\mN,$ then $P\otimes_{\mA}X \to P\otimes_{\mA}Y$ is a weak equivalence in $\mN$.
\end{lemma}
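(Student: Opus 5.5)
Regard the weighted tensor $P\otimes_{\mA}-\colon \mV\Fun(\mA,\mN)\to\mN$ as a functor of the diagram for the fixed cofibrant weight $P$. The strategy is to show that this functor, restricted to objectwise cofibrant diagrams, takes objectwise weak equivalences to weak equivalences. We reduce to the case where $P$ is a cell complex, and in fact to a retract of one. The projective model structure on $\Psh_{\mV}(\mA)$ is transferred along the evaluations. Its generating cofibrations, when they exist, have the form $\mA(-,a)\otimes i$ for $i\colon K\to L$ a cofibration in $\mV$. In the general setting of \ref{sec3assumption}, where cofibrant generation is not assumed, we use the characterization that projective cofibrations have the left lifting property against objectwise trivial fibrations. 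We therefore argue through a Quillen-type statement rather than a cell induction, as follows.

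Fix an objectwise cofibrant $X\colon\mA\to\mN$. Consider the functor $$X^{(-)}\colon \mN^{\op}\to \Psh_{\mV}(\mA)^{\op},\qquad Z\mapsto \mN(X-,Z),$$ which is right adjoint (in the appropriate variance) to $P\mapsto P\otimes_{\mA}X$. For each $a$, the component $\mN(Xa,Z)$ sends fibrations and trivial fibrations in $Z$ to fibrations and trivial fibrations in $\mV$, because $Xa$ is cofibrant and $\mN$ is a $\mV$-model category (SM7 in adjoint form). Hence $Z\mapsto\mN(X-,Z)$ is right Quillen into the projective structure, so $-\otimes_{\mA}X$ is left Quillen $\Psh_{\mV}(\mA)\to\mN$. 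In particular, $P\otimes_{\mA}X$ is cofibrant. This step needs only the pushout-product axiom and the description of projective fibrations as objectwise fibrations.

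Next we pass to the comparison in the diagram variable. Ken Brown's lemma lets us handle $u$ by first treating the case of an objectwise trivial cofibration between objectwise cofibrant diagrams. The general case then follows by factoring $u$ as an objectwise cofibration followed by an objectwise trivial fibration; we may not have an injective model structure, so we factor objectwise, or use the mapping-cylinder factorization $X\to X\sqcup_{}\!\!\dots$. To avoid needing such a factorization in $\mV\Fun(\mA,\mN)$, we use the dual trick instead. Test against a fibrant $Z\in\mN$: it suffices that $$\Map(P\otimes_{\mA}Y,Z)\to\Map(P\otimes_{\mA}X,Z)$$ is an isomorphism in $\Ho$. By adjunction this becomes $$\mathrm{Map}_{\Psh}\bigl(P,\mN(Y-,Z)\bigr)\to\mathrm{Map}_{\Psh}\bigl(P,\mN(X-,Z)\bigr).$$ Now $\mN(u,Z)\colon\mN(Y-,Z)\to\mN(X-,Z)$ is an objectwise weak equivalence between projectively fibrant presheaves. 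Indeed, $\mN(-,Z)$ sends weak equivalences between cofibrant objects to weak equivalences when $Z$ is fibrant, by Ken Brown's lemma applied to the right Quillen functor $\mN(-,Z)\colon\mN^{\op}\to\mV$. Since $P$ is cofibrant and both targets are fibrant, homming out of $P$ preserves this weak equivalence. We conclude by Yoneda in $\Ho\mN$, using that $P\otimes_{\mA}X$ and $P\otimes_{\mA}Y$ are cofibrant by the previous paragraph, so that derived and underived mapping objects agree.

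The main obstacle is making the last step rigorous in a plain model category: we must check that maps into all fibrant $Z$ detect weak equivalences between cofibrant objects. We intend to use $[-,Z]$ in $\Ho\mN$, which is computed by homotopy classes when the source is cofibrant and $Z$ is fibrant. These homotopy classes are $\pi_0$ of the derived mapping objects above, obtained by applying $\Ho\mV(\tu,-)$, which is legitimate because the unit is cofibrant. Bijectivity for all fibrant $Z$ then gives an isomorphism in $\Ho\mN$ by Yoneda, and hence a weak equivalence.
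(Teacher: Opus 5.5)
Your argument is correct, but it takes a different route from the paper's. The paper stays in the diagram variable. A Ken Brown/retract argument reduces to the case of an objectwise trivial cofibration $i\colon X\to Y$. The $\mV$-model axiom then shows that $P\otimes_{\mA}i$ is itself a trivial cofibration for projectively cofibrant $P$: by adjunction, $\emptyset\to P$ lifts against the objectwise trivial fibration $\mN(Y-,Z)\to\mN(X-,Z)\times_{\mN(X-,Z')}\mN(Y-,Z')$ for any fibration $Z\to Z'$.

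You instead transpose the problem. Testing against fibrant $Z$ through the Quillen pair $-\otimes_{\mA}X\dashv\mN(X-,-)$ turns $u$ into $\mN(u,Z)$, an objectwise weak equivalence between projectively fibrant presheaves. Ken Brown is then applied pointwise to the right Quillen functor $\mN(-,Z)\colon\mN^{\op}\to\mV$, i.e.\ inside a genuine model category, and Yoneda plus saturation in $\Ho\mN$ finish the argument.

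What this buys you is what you wanted: no factorization of $u$ in $\mV\Fun(\mA,\mN)$ is ever needed. The paper's reduction is nonetheless also fine under the stated hypotheses. The enriched mapping cylinder $Y\sqcup_X(X\otimes C)$, with $C$ a cylinder for the cofibrant unit, factors $u$ as an objectwise trivial cofibration followed by a retraction of one. In return, the paper's route yields the sharper point-set fact that $P\otimes_{\mA}-$ carries objectwise trivial cofibrations to trivial cofibrations.

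On presentation:
\begin{itemize}
\item The passage through enriched mapping objects and $\Ho\mV(\tu,-)$ is unnecessary. The derived adjunction bijection $\Ho\mN(P\otimes_{\mA}X,Z)\cong\Ho\Psh_{\mV}(\mA)\bigl(P,\mN(X-,Z)\bigr)$ already suffices, once you check naturality in $X$ on strict maps before passing to homotopy classes.
\item $Z\mapsto\mN(X-,Z)$ is simply a covariant functor $\mN\to\Psh_{\mV}(\mA)$.
\item The abandoned cell-induction and factorization digressions can be cut.
\end{itemize}
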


\begin{proof}
By a retract argument we reduce to the case where $ i:X\to Y$ is an objectwise trivial cofibration. Then $ P\otimes_{\mA}i: P\otimes_{\mA}X\to P\otimes_{\mA}Y$ is a trivial cofibration in $\mN$ for every cofibrant $P$, because $\mN$ is a $\mV$-model category.
\end{proof}

\begin{lemma}
\label{lem:diagram-derived-adjunction-M}
Let $ F:\Psh_{\mV}(\mA) \rightleftarrows \Psh_{\mV}(\mB):G $ be a $\mV$-enriched Quillen adjunction. Let $\mI$ be a small locally cofibrant $\mV$-category. Then postcomposition gives a Quillen adjunction $$  F_*:\mV\Fun(\mI,\Psh_{\mV}(\mA))_{\mathrm{proj}} \rightleftarrows \mV\Fun(\mI,\Psh_{\mV}(\mB))_{\mathrm{proj}} :G_*.$$ In particular, if $U:\mI\to\Psh_{\mV}(\mA)$ is objectwise cofibrant and $  V:\mI\to\Psh_{\mV}(\mB)$ is objectwise fibrant, there is a natural isomorphism $$\Ho\bigl(\mV\Fun(\mI,\Psh_{\mV}(\mB))\bigr)(FU,V) \cong \Ho\bigl(\mV\Fun(\mI,\Psh_{\mV}(\mA))\bigr)(U,GV) \eqno{(\ast)}$$

\end{lemma}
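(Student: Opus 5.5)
We first prove that $(F_*,G_*)$ is a Quillen adjunction, and then deduce $(\ast)$ by deriving it.

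\emph{Step 1: the adjunction.} Since $F\dashv G$ is $\mV$-enriched, postcomposition gives a $\mV$-adjunction $F_*\dashv G_*$ on $\mV$-functor categories. Concretely, the enriched naturality of the hom-isomorphism $\Psh_{\mV}(\mB)(FX,Y)\cong\Psh_{\mV}(\mA)(X,GY)$ passes through the end formula for enriched natural transformations.

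\emph{Step 2: $G_*$ is right Quillen.} Projective fibrations and projective trivial fibrations in $\mV\Fun(\mI,\Psh_{\mV}(\mB))_{\mathrm{proj}}$ are defined objectwise. Since $G$ preserves fibrations and trivial fibrations, $G_*$ preserves them objectwise. Hence $G_*$ is right Quillen and $F_*$ is left Quillen. Here $\mI$ must be small and locally cofibrant, and $\Psh_{\mV}(\mA)$ must be a $\mV$-model category, so that the projective structures exist. I would check the latter as a routine consequence of the projective structure on $\Psh_{\mV}(\mA)$ being $\mV$-enriched, using the assumptions in \ref{sec3assumption}.

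\emph{Step 3: the derived isomorphism $(\ast)$.} A Quillen adjunction induces $\Ho(\cdot)(\mathbb{L}F_*U,V)\cong\Ho(\cdot)(U,\mathbb{R}G_*V)$. The remaining point is that $FU$ and $GV$ compute the derived functors under the weaker hypotheses of objectwise (co)fibrancy, rather than projective (co)fibrancy. For $G_*$ this is immediate. Take a projective fibrant replacement $V\to RV$, which is an objectwise weak equivalence between objectwise fibrant objects. Ken Brown's lemma applied to $G$ objectwise shows that $GV\to GRV$ is an objectwise weak equivalence.

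For $F_*$, take a projectively cofibrant replacement $QU\to U$. By \Cref{cor:projectively-cofibrant-objectwise-cofibrant-M}, applied to the locally cofibrant $\mI$ and the $\mV$-model category $\Psh_{\mV}(\mA)$, $QU$ is objectwise cofibrant. So $QU\to U$ is an objectwise weak equivalence between objectwise cofibrant diagrams. Ken Brown's lemma for the left Quillen functor $F$, applied objectwise, shows that $FQU\to FU$ is an objectwise weak equivalence. Therefore $FU\cong\mathbb{L}F_*U$ and $GV\cong\mathbb{R}G_*V$ in the homotopy categories, and $(\ast)$ follows.

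Naturality follows from the naturality of the derived adjunction together with the naturality of the comparison maps. The main obstacle, though a mild one, is justifying that $\Psh_{\mV}(\mA)$ is a $\mV$-model category, so that the corollary and the existence of the projective structure on $\mI$-diagrams apply. Everything else is formal.
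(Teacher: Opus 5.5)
Your proposal is correct and follows essentially the same route as the paper: $G_*$ is right Quillen because projective (trivial) fibrations are objectwise, and \Cref{cor:projectively-cofibrant-objectwise-cofibrant-M} together with Ken Brown's lemma gives $\mathbf L F_*(U)\cong FU$. The only small difference is on the right-hand side, where the paper just observes that an objectwise fibrant $V$ is already projectively fibrant, since fibrations are objectwise, so your fibrant replacement $V\to RV$ is unnecessary.
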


\begin{proof}
Since projective fibrations and trivial fibrations are objectwise, $F_*\dashv G_*$ is Quillen. Let $QU\to U$ be a projective cofibrant replacement. By \Cref{cor:projectively-cofibrant-objectwise-cofibrant-M}, the diagram $QU$ is objectwise cofibrant. Since $U$ is cofibrant-valued, $QU\to U$ is an objectwise weak equivalence between objectwise cofibrant functors, therefore $FQU\to FU$ is an objectwise weak equivalence. Thus the left derived functor of $F_*$ composed to $U$ is just $\mathbf L F_*(U)\cong FU$, and $\mathbf R G_*(V)\cong GV$ because $V$ is projectively fibrant.

\end{proof}

\subsection{Yoneda lemma}

\begin{definition}\label{lem:leftquiliffobjcof}
   For a $\mV$-enriched diagram $X:\mA\to \mE$ with $\mE$ $\mV$-cocomplete, we denote $$(-)\otimes_{\mA}X = \Lan_{\y_{\mA}}X: \Psh_{\mV}(\mA)\to \mE$$ the $\mV$-functor whose right adjoint is $$e \mapsto \mE\bigl(X-,e\bigr)$$
   Let $\mV$ be a monoidal model category, if $\mE$ is a $\mV$-model category, then by definition the functor $(-)\otimes_{\mA}X $ is left Quillen with respect to the projective model structure if and only if $X(a)$ is cofibrant for every $a\in\mA$.\end{definition}

\begin{definition}
\label{def:cofibrant-wise-equivalence-M} \label{lem:detection-on-representables-M}
Let $F,G:\Psh_{\mV}(\mA) \to\mN$ be $\mV$-enriched left Quillen functors. We call a $\mV$-natural transformation $\alpha:F\to G$ a \emph{cofibrant-wise equivalence} if, for every cofibrant $P\in\Psh_{\mV}(\mA)$, the map $$\alpha_P:F(P)\to G(P)$$ is a weak equivalence in $\mN$. We call it a \emph{representable-wise equivalence} if, for every $a\in\mA$, the map $$\alpha_{\y_{\mA}(a)}: F(\y_{\mA}(a))\to G(\y_{\mA}(a))$$ is a weak equivalence. By \Cref{lem:cofibrant-weight-homotopy-invariance}, these two notions precisely coincide.
\end{definition}

We recall that a \emph{relative category} is a pair $\bigl(\mC,\mW\bigr)$ where $\mC$ is a category and $\mW$ is a subcategory containing all the identities; any model category with its choice of weak equivalences is in particular a relative category. 

An \emph{homotopy equivalence of relative categories} is a pair  $F,G$ of functors preserving the weak equivalences and such that both $FG$ and $GF$ are connected to the identity by a finite zig-zag of natural transformations which are objectwise weak equivalences. If $\bigl(\mC,\mW_{\mC}\bigr)$ and $\bigl(\mD,\mW_{\mD}\bigr)$ are homotopy equivalent, there is an associated equivalence $\mC\left[\mW_{\mC}^{-1}\right] \simeq \mD\left[\mW_{\mD}^{-1}\right]$ of their localizations.

We consider the relative category $\mV\Fun^{L}_{\mathrm{Quil}} \bigl( \Psh_{\mV}(\mA),\Psh_{\mV}(\mB) \bigr)$ of $\mV$-enriched left Quillen functors equipped with the cofibrant-wise equivalences.

\begin{lemma}[]
\label{lem:homotopical-Yoneda-comparison-M}
Restriction along $\y_{\mA}$ induces a homotopy equivalence of relative categories $$\mV\Fun^{L}_{\mathrm{Quil}} \bigl( \Psh_{\mV}(\mA),\Psh_{\mV}(\mB) \bigr)   \simeq   \mV\Fun(\mA,\Psh_{\mV}(\mB))_{\mathrm{proj}} \simeq \Psh_{\mV}(\mA^{\op}\otimes\mB)_{\mathrm{proj}} .$$
\end{lemma}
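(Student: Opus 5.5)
The plan is to produce two functors in opposite directions and check that they preserve weak equivalences and that both composites are linked to the identity by objectwise weak equivalences. The second equivalence $\mV\Fun(\mA,\Psh_{\mV}(\mB))_{\mathrm{proj}}\simeq\Psh_{\mV}(\mA^{\op}\otimes\mB)_{\mathrm{proj}}$ is the enriched currying isomorphism. Projective weak equivalences are objectwise on both sides, so it is an isomorphism of relative categories, and all the work lies in the first comparison.

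I work with the projectively cofibrant objects of $\mV\Fun(\mA,\Psh_{\mV}(\mB))$; equivalently, I prove the statement for the full relative subcategory of these, and then compose with a functorial cofibrant replacement $Q$, whose natural map $Q\Rightarrow\id$ is an objectwise weak equivalence. The two functors are as follows.
\begin{itemize}
\item Restriction $R(F)=F\circ\y_{\mA}$. It lands in cofibrant diagrams when $F$ is left Quillen, since each $\y_{\mA}(a)$ is projectively cofibrant; more precisely it lands in objectwise cofibrant diagrams, so I use $Q\circ R$, with the zig-zag $QR\to R$.
\item Left Kan extension $L(X)=(-)\otimes_{\mA}X=\Lan_{\y_{\mA}}X$. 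By \Cref{lem:leftquiliffobjcof} together with \Cref{cor:projectively-cofibrant-objectwise-cofibrant-M}, this is left Quillen whenever $X$ is projectively cofibrant, and more generally whenever $X$ is objectwise cofibrant.
\end{itemize}

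The steps, in order, are these.
\begin{enumerate}
\item \emph{$R$ preserves weak equivalences.} A cofibrant-wise equivalence is in particular representable-wise, which is exactly an objectwise equivalence after restriction.
\item \emph{$L$ preserves weak equivalences.} If $X\to Y$ is an objectwise equivalence of objectwise cofibrant diagrams, then \Cref{lem:cofibrant-weight-homotopy-invariance}, applied with $\mN=\Psh_{\mV}(\mB)$ (a $\mV$-model category, assuming the projective structure is $\mV$-enriched in the usual sense), shows that $P\otimes_{\mA}X\to P\otimes_{\mA}Y$ is a weak equivalence for every cofibrant $P$.
\item \emph{The composite $R\circ L$.} By the enriched Yoneda lemma, $\y_{\mA}(a)\otimes_{\mA}X\cong X(a)$, so $RL\cong\id$ on the nose, and $QRL\cong Q\to\id$.
\item \emph{The composite $L\circ R$.} There is a canonical comparison $\Lan_{\y_{\mA}}(F\y_{\mA})\to F$, the counit of $\Lan_{\y_{\mA}}\dashv\y_{\mA}^*$. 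Composing with $LQR\to LR$ gives a zig-zag $LQR\to LR\to F$.
\end{enumerate}

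The main obstacle is showing that $LR\to F$ is a cofibrant-wise equivalence. Here I will use \Cref{def:cofibrant-wise-equivalence-M}: it suffices to check representable-wise. At a representable $\y_{\mA}(a)$ the map is the Yoneda isomorphism $F\y_{\mA}(a)\to F\y_{\mA}(a)$, and the same holds for $LQR\to LR$, which evaluates at $\y_{\mA}(a)$ to $QR(F)(a)\to R(F)(a)$, an equivalence.

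One subtlety needs care: $LR(F)$ must itself be left Quillen for the detection criterion to apply. This holds because $R(F)$ is objectwise cofibrant, so \Cref{lem:leftquiliffobjcof} gives it. I also need the transformations to be $\mV$-natural, which follows because everything is built from enriched Kan extensions. Naturality of the zig-zags in $F$ and $X$ is then routine, and the induced equivalence of localizations follows.
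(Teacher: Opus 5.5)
Your proposal is correct and follows essentially the same route as the paper. The inverse is $X\mapsto\Lan_{\y_{\mA}}QX$, which is your $L$ precomposed with cofibrant replacement. One zig-zag is $\Lan_{\y_{\mA}}(QX)\circ\y_{\mA}\cong QX\xrightarrow{\sim}X$, and the other is $\Lan_{\y_{\mA}}Q(F\y_{\mA})\to\Lan_{\y_{\mA}}(F\y_{\mA})\cong F$, checked representable-wise. You also spell out points the paper leaves implicit: that both functors preserve weak equivalences, and the currying isomorphism for the second equivalence.
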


\begin{proof}
\label{constr:bar-M}
Let $X:\mA\to\Psh_{\mV}(\mB)$ be a $\mV$-functor. We define, up to a contractible space of choices, the $\mV$-enriched left Quillen functor $$\overline X := \Lan_{\y_{\mA}}QX \colon \Psh_{\mV}(\mA) \to \Psh_{\mV}(\mB)$$ where the contractible ambiguity is due to the choice of a projectively cofibrant replacement $QX\xrightarrow{\sim}X$ in $\mV\Fun(\mA,\Psh_{\mV}(\mB))_{\mathrm{proj}}$ ensuring that, by Corollary \ref{cor:projectively-cofibrant-objectwise-cofibrant-M}, each $QX(a)$ is cofibrant. There is a weak equivalence $\overline X\circ\y_{\mA} \cong QX \xrightarrow{\sim} X$, so that $\overline{(-)}$ is a right homotopy inverse to precomposition with $\y_{\mA}$. 

In the other direction, given $F \in \mV\Fun^{L}_{\mathrm{Quil}} \bigl( \Psh_{\mV}(\mA),\Psh_{\mV}(\mB) \bigr) $, 
the map $Q(F\y_{\mA})\to F\y_{\mA}$ induces a representable-wise weak equivalence \ref{lem:detection-on-representables-M}
$$\overline{F\y_{\mA}} = \Lan_{\y_{\mA}}Q(F\y_{\mA}) \to \Lan_{\y_{\mA}}(F\y_{\mA}) \cong F.$$
\end{proof}

\subsection{Quillen profunctors}

\begin{definition}
\label{def:Pro-M-hom}
For locally fibrant-cofibrant $\mV$-categories $\mA,\mB$, let $\Pro_{\mV}(\mA,\mB)$ be a category defined by formally inverting the cofibrant-wise equivalences in $\mV\Fun^{L}_{\mathrm{Quil}} \bigl( \Psh_{\mV}(\mA),\Psh_{\mV}(\mB) \bigr)$.
\end{definition}
Note that by \Cref{lem:homotopical-Yoneda-comparison-M} this localization is locally small. By \Cref{lem:detection-on-representables-M}, one obtains the same localization inverting representable-wise equivalences.

Composition of enriched left Quillen functors preserves cofibrant-wise equivalences, hence we obtain a $2$-category, as follows.
\begin{definition}
\label{def:Pro-M}
Let $\Pro_{\mV}$ be the $2$-category whose objects are locally fibrant-cofibrant $\mV$-categories and whose hom-object from $\mA$ to $\mB$ is the category $\Pro_{\mV}(\mA,\mB)$.
\end{definition}
\begin{remark}
    $\Pro_{\mV}(\mA,\mB)$ is the underlying category of an $\Ho\mV$-enriched category that we denote $\underline{\Pro_{\mV}}\bigl(\mA,\mB\bigr)$, so that we have $\Pro_{\mV}(\mA,\mB) = \pi_0 \underline{\Pro_{\mV}}\bigl(\mA,\mB\bigr)$ where $\operatorname{\pi_0}\coloneqq \bigl(\operatorname{Ho}\mV\bigr) \bigl(\tu,-\bigr)$. Therefore, $\Pro_{\mV}$ underlies what might be called an $\bigl(\Ho\mV,2\bigr)$-category $\underline{\Pro_{\mV}}$, by which we mean a category enriched in the category of $\Ho\mV$-enriched categories, with tensor product induced by the derived tensor product of $\mV$.
\end{remark}
\begin{definition}
\label{def:homotopy-representables-M}
Let $\mB$ be a small locally fibrant-cofibrant $\mV$-category. Define $$\widetilde{\y(\mB)} \subseteq \Psh_{\mV}(\mB)$$ to be the full $\mV$-subcategory spanned by those objects $E$ which are fibrant and cofibrant in $\Psh_{\mV}(\mB)$ and isomorphic in $\Ho\Psh_{\mV}(\mB)$ to $\y_{\mB}(b)$ for some $b\in\mB$.
\end{definition}

\begin{definition}
\label{def:representable-Quillen-M}\label{lem:representable-normal-form-M}
A \emph{Quillen $\mV$-profunctor} is an object $F\in\Pro_{\mV}(\mA,\mB)$. It is called \emph{representable} if there exists a $\mV$-functor $X:\mA\to\widetilde{\y(\mB)}$ and an isomorphism $F\cong\overline X$ in $\Pro_{\mV}(\mA,\mB)$.

If $Z:\mA\to\Psh_{\mV}(\mB)$ is a $\mV$-functor whose values are cofibrant and such that $Z(a)$ is isomorphic in $\Ho\Psh_{\mV}(\mB)$ to a representable, then there is an objectwise weak equivalence $Z\to Z^f$ in $\mV\Fun(\mA,\Psh_{\mV}(\mB))_{\mathrm{proj}}$ such that $Z^f$ takes values in $\widetilde{\y(\mB)}$, and $\overline Z\cong \overline{Z^f}$ in $\Pro_{\mV}(\mA,\mB)$.
\end{definition}

\begin{lemma}
\label{lem:representables-closed-M}
Representable Quillen $\mV$-profunctors are closed under composition, and the identity is representable.
\end{lemma}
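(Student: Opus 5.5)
The identity is the easy half. The identity of $\Psh_{\mV}(\mA)$ restricts along $\y_{\mA}$ to $\y_{\mA}\colon \mA\to\Psh_{\mV}(\mA)$. Its values are representables, which are cofibrant in the projective model structure (they corepresent evaluation, which preserves trivial fibrations). The identity is a left Quillen functor, so by the proof of \Cref{lem:homotopical-Yoneda-comparison-M} we have $\id\cong \overline{\y_{\mA}}$ in $\Pro_{\mV}(\mA,\mA)$. Each $\y_{\mA}(a)$ is trivially isomorphic in $\Ho\Psh_{\mV}(\mA)$ to a representable. The normal-form statement in \Cref{def:representable-Quillen-M} then supplies an objectwise weak equivalence $\y_{\mA}\to \y_{\mA}^f$ with values in $\widetilde{\y(\mA)}$ and $\overline{\y_{\mA}}\cong\overline{\y_{\mA}^f}$. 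So the identity is representable.

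For composition, take representable $F\in\Pro_{\mV}(\mA,\mB)$ and $G\in\Pro_{\mV}(\mB,\mC)$, represented by $X\colon\mA\to\widetilde{\y(\mB)}$ and $Y\colon\mB\to\widetilde{\y(\mC)}$. Composition in $\Pro_{\mV}$ is induced by composing left Quillen functors, so $G\circ F\cong \overline Y\circ\overline X$. By \Cref{lem:homotopical-Yoneda-comparison-M}, this composite is determined up to isomorphism in $\Pro_{\mV}(\mA,\mC)$ by its restriction along $\y_{\mA}$, namely $Z:=\overline Y\circ\overline X\circ\y_{\mA}\cong \overline Y\circ QX$. Since $\overline Y$ is left Quillen and $QX$ is objectwise cofibrant by \Cref{cor:projectively-cofibrant-objectwise-cofibrant-M}, $Z$ takes cofibrant values. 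By the normal form in \Cref{def:representable-Quillen-M}, it then suffices to show that each $Z(a)$ is isomorphic in $\Ho\Psh_{\mV}(\mC)$ to a representable.

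Fix $a\in\mA$. The map $QX(a)\to X(a)$ is a weak equivalence between cofibrant objects, and $X(a)\in\widetilde{\y(\mB)}$ is weakly equivalent, through a zigzag in $\Ho$, to $\y_{\mB}(b)$ for some $b$. All three objects are cofibrant: $\y_{\mB}(b)$ is projectively cofibrant and $X(a)$ is cofibrant by definition of $\widetilde{\y(\mB)}$. By Ken Brown's lemma, the left Quillen functor $\overline Y$ preserves weak equivalences between cofibrant objects, so it induces isomorphisms $\overline Y(QX(a))\cong\overline Y(X(a))\cong\overline Y(\y_{\mB}(b))$ in $\Ho\Psh_{\mV}(\mC)$. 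The zigzag between the cofibrant objects $X(a)$ and $\y_{\mB}(b)$ can be taken through cofibrant objects, since they are isomorphic in the homotopy category of cofibrant objects. Finally, $\overline Y(\y_{\mB}(b))=\Lan_{\y_{\mB}}QY(\y_{\mB}(b))\cong QY(b)$, which is weakly equivalent to $Y(b)\in\widetilde{\y(\mC)}$ and hence isomorphic in $\Ho$ to some $\y_{\mC}(c)$. Therefore $Z(a)$ is homotopy-representable, and $G\circ F\cong\overline{Z^f}$ is representable.

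The step I expect to need the most care is the bookkeeping that identifies the composite in $\Pro_{\mV}$ with $\overline Z$. Concretely, one must check that $\overline Y\circ\overline X\to\Lan_{\y_{\mA}}(Z)$ is a representable-wise equivalence, which is an isomorphism by Yoneda for left Kan extensions. One then precomposes with $QZ\to Z$, an objectwise weak equivalence between cofibrant-valued diagrams, and applies \Cref{lem:cofibrant-weight-homotopy-invariance} and \Cref{lem:detection-on-representables-M}. The remaining point is to confirm that the choices of cofibrant replacement are well defined up to isomorphism in the localization.
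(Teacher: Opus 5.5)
Your proof is correct and follows the paper's argument. The paper likewise takes $\overline{\y_{\mA}}\cong\id_{\mA}$ for the identity. Since $\mA$ is locally fibrant-cofibrant, $\y_{\mA}$ already lands in $\widetilde{\y(\mA)}$, so your replacement $\y_{\mA}^f$ is unnecessary. For composition, the paper uses \Cref{lem:homotopical-Yoneda-comparison-M} to identify $\overline Y\circ\overline X$ with $\overline{\overline Y\circ X}$ and then applies the normal form to land in $\widetilde{\y(\mC)}$. You instead work with $\overline Y\circ QX$, which is equivalent by Ken Brown since $X$ is cofibrant-valued, and you explicitly check that each value is homotopy-representable, a step the paper leaves implicit.
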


\begin{proof}
Let $X:\mA\to\widetilde{\y(\mB)}$ and $Y:\mB\to\widetilde{\y(\mC)}$ represent two profunctors. By \Cref{lem:representable-normal-form-M} the diagram $\overline Y\circ X: \mA\to\Psh_{\mV}(\mC)$ may be replaced by a functor valued in $\widetilde{\y(\mC)}$. Moreover, by \Cref{lem:homotopical-Yoneda-comparison-M}, $\overline Y\circ\overline X \cong \overline{\overline Y\circ X}$ in $\Pro_{\mV}(\mA,\mC)$ and the Yoneda embedding represents the identity: $\overline{\y_{\mA}}\cong\Lan_{\y_{\mA}}\y_{\mA} \cong \id_{\mA}$ in $\Pro_{\mV}(\mA,\mA)$.
\end{proof}
\subsection{The construction}

\begin{definition}
\label{def:Cat-h-M}
Let $\mV\Cat^h(\mA,\mB)$ be the full subcategory of $\Pro_{\mV}(\mA,\mB)$ spanned by the representable Quillen $\mV$-profunctors. We obtain a strict $2$-category denoted $ \mV\Cat^h$.
\end{definition}

\begin{theorem}
\label{th:equipment-M}
$$\mV\Cat^h \to \Pro_{\mV}$$ is an equipment.
\end{theorem}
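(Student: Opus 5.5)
We have to check the three conditions of \Cref{def:woodequip} for the inclusion $(-)_*\colon\mV\Cat^h\to\Pro_{\mV}$. By \Cref{def:Cat-h-M} and \Cref{lem:representables-closed-M}, $\mV\Cat^h$ is a locally full sub-$2$-category of $\Pro_{\mV}$ with the same objects, so the inclusion is a strict $2$-functor. It is the identity on objects, and each $\mV\Cat^h(\mA,\mB)\to\Pro_{\mV}(\mA,\mB)$ is the inclusion of a full subcategory, hence fully faithful. Conditions (i) and (ii) are therefore immediate. The whole content of the theorem is (iii): every representable Quillen profunctor $F\colon\mA\nrightarrow\mB$ has a right adjoint in the $2$-category $\Pro_{\mV}$.

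For (iii) I would use the normal form of \Cref{def:representable-Quillen-M}. Write $F\cong\overline X=\Lan_{\y_{\mA}}X$ with $X\colon\mA\to\widetilde{\y(\mB)}$. Each $X(a)$ is then projectively cofibrant, so no further replacement is needed. The underlying left Quillen functor $\overline X$ has the strict right adjoint $R_X\colon\Psh_{\mV}(\mB)\to\Psh_{\mV}(\mA)$, $R_X(P)=\Psh_{\mV}(\mB)\bigl(X-,P\bigr)$. Viewed as a profunctor $\mB\nrightarrow\mA$, the candidate conjoint is
\[ F^\vee:=\overline{R_X\circ\y_{\mB}}\in\Pro_{\mV}(\mB,\mA). \]
This is the enriched analogue of $(a,b)\mapsto\Hom(f(a),b)$.

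The key step is to show that $R_X$ is itself left Quillen up to cofibrant-wise equivalence, i.e.\ that $R_X$ preserves weak equivalences between cofibrant objects and that $\Lan_{\y_{\mB}}\bigl(Q(R_X\y_{\mB})\bigr)$ is cofibrant-wise equivalent to $R_X$ on cofibrant presheaves. The pointwise formula is $R_X(P)(a)=\Psh_{\mV}(\mB)(X(a),P)$. Since $X(a)$ is fibrant-cofibrant and weakly equivalent to some $\y_{\mB}(b)$, which is also cofibrant by \Cref{lem:evaluation-both-quillen-M}, there is a chain of weak equivalences
\[ R_X(P)(a)\simeq\mathbf R\Psh_{\mV}(\mB)(\y_{\mB}(b),P)\simeq P(b) \]
for fibrant $P$, by the enriched Yoneda lemma. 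Hence $R_X$ is, up to weak equivalence, precomposition along the weak functor $a\mapsto b$. Precomposition preserves all colimits and weak equivalences between objects that are fibrant or cofibrant. I expect this to be the main obstacle: $R_X$ only manifestly preserves weak equivalences between fibrant objects. The plan is to first restrict to fibrant-cofibrant objects using \Cref{lem:cofibrant-weight-homotopy-invariance} and \Cref{lem:homotopical-Yoneda-comparison-M}, which show that maps in $\Pro_{\mV}$ are detected on representables. On representables $\y_{\mB}(b')$, local fibrancy of $\mB$ makes them fibrant, so $R_X\y_{\mB}$ is homotopically meaningful.

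Finally, I would produce the unit and counit in $\Pro_{\mV}$ from the strict unit $\id\to R_X\overline X$ and counit $\overline X R_X\to\id$. To do this, compose with the comparison maps $F^\vee\to R_X$ and use \Cref{lem:diagram-derived-adjunction-M}, applied with $\mI=\mA$ or $\mI=\mB$, to identify the hom-sets of $\Pro_{\mV}$ with homotopy classes of diagrams. Concretely, \Cref{lem:homotopical-Yoneda-comparison-M} gives natural bijections
\[ \Pro_{\mV}(\mC,\mB)(G,\,F\circ H)\cong\Pro_{\mV}(\mC,\mA)(F^\vee\circ G,\,H), \]
induced by the derived adjunction $(\ast)$. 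These bijections are natural in $\mC$ and compatible with composition, so they exhibit a $2$-categorical adjunction. The triangle identities then follow from the strict triangle identities, since all comparison maps are invertible in the localizations $\Pro_{\mV}(-,-)$.
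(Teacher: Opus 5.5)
\textbf{Verdict: genuine gap in (iii).} Your setup matches the paper: (i) and (ii) are immediate, and your candidate $F^\vee=\overline{R_X\y_{\mB}}$ is exactly the paper's $\overline{G\y_{\mB}}$.

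The gap is at the step you yourself flag, and you never actually establish it. Everything in your last paragraph depends on the comparison $F^\vee P\to R_X P$ being invertible in $\Ho$ at \emph{non-representable} objects $P$. This covers transporting the strict unit $\id\to R_X\overline X$, the hom-set bijections, and the assertion that ``all comparison maps are invertible''. Already the unit, restricted along $\y_{\mA}$, must land in $F^\vee F\y_{\mA}\simeq F^\vee X$. So you need $F^\vee X(a)\simeq R_X X(a)$, where $X(a)$ is only homotopy-representable.

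Your two routes to this do not work:
\begin{itemize}
\item The target you set (``$R_X$ preserves weak equivalences between cofibrant objects'', ``$\overline{R_X\y_{\mB}}\simeq R_X$ on cofibrant presheaves'') is the wrong one. $R_X$ is right Quillen, so it is homotopically meaningful only on fibrant objects, and nothing gives you control at cofibrant non-fibrant presheaves.
\item The fallback, ``maps in $\Pro_{\mV}$ are detected on representables'', does not help either. It says that $2$-cells are determined by restriction along the Yoneda embedding of the \emph{source}. It does not say that $F^\vee$, which is left Quillen and preserves homotopy colimits, agrees with the right Quillen $R_X$ anywhere other than at the $\y_{\mB}(b)$, where they agree by construction.
\end{itemize}
Your pointwise equivalences $R_X(P)(a)\simeq P(b_a)$ are correct, but you never compare them with $F^\vee(P)(a)$ or with the canonical map between the two. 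Also, ``precomposition along the weak functor $a\mapsto b$'' is not an actual functor you can precompose with.

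Separately, your displayed bijection $\Pro_{\mV}(\mC,\mB)(G,F\circ H)\cong\Pro_{\mV}(\mC,\mA)(F^\vee\circ G,H)$ expresses $F^\vee\dashv F$, which is the wrong direction. What $(\ast)$ gives is $\Pro_{\mV}(\mC,\mB)(F\circ H,K)\cong\Pro_{\mV}(\mC,\mA)(H,F^\vee\circ K)$. Its final step, replacing $R_X(K\y)$ by $F^\vee(K\y)$, is precisely the missing comparison.

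\textbf{How the paper closes this.} It uses a naturality trick.
\begin{itemize}
\item The counit $\epsilon\colon FF^\vee\to\id$ is defined from the canonical isomorphism $\gamma\colon F^\vee\y_{\mB}\cong G\y_{\mB}$ via $(\ast)$.
\item Its adjunct $\Gamma_P\colon F^\vee P\to GP$ is natural in $P\in\Ho\Psh_{\mV}(\mB)$ and equals $\gamma_b$ at $\y_{\mB}(b)$.
\item Hence $\Gamma_P$ is invertible at every $P$ isomorphic in $\Ho$ to a representable, in particular at each $X(a)\in\widetilde{\y(\mB)}$. This is exactly where representability of $F$ enters.
\item The unit is then $\y_{\mA}\to GX\cong F^\vee X\cong F^\vee F\y_{\mA}$, and the triangle identities are checked after restriction to representables.
\end{itemize}

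\textbf{How to complete your own pointwise idea.} Fix a cofibrant-fibrant $P$ and $a\in\mA$, and choose an actual weak equivalence $w\colon\y_{\mB}(b_a)\to X(a)$; this is possible since the source is cofibrant and the target fibrant. By naturality of $\int^b P(b)\otimes\Psh_{\mV}(\mB)(C,\y_{\mB}b)\to\Psh_{\mV}(\mB)(C,P)$ in $C$, the composite $w^*\circ g_P(a)$ equals $P\otimes_{\mB}(-)$ applied to the map $Q(R_X\y_{\mB})(-)(a)\to\mB(b_a,-)$, followed by the Yoneda isomorphism. That map is an objectwise weak equivalence between objectwise cofibrant diagrams, so $w^*\circ g_P(a)$ is a weak equivalence by \Cref{lem:cofibrant-weight-homotopy-invariance}. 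Since $w^*\colon\Psh_{\mV}(\mB)(X(a),P)\to P(b_a)$ is a weak equivalence for fibrant $P$, two-out-of-three shows that $g_P$ is a weak equivalence. Your (correctly oriented) hom-set bijection then goes through as in the converse half of \Cref{prop:equipment-implies-reconstruction-M}.
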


\begin{proof}
Let $F\in \mV\Cat^h(\mA,\mB)$, by definition given as $F=\overline X$ for some $X:\mA\to\widetilde{\y(\mB)}.$ Since $X$ is cofibrant-valued, we may take $F=\Lan_{\y_{\mA}}X$. Let $G$ be the right Quillen $\mV$-adjoint to $F$. We introduce the purported conjoint of $F$ as $$F^\vee := \overline{G\y_{\mB}} : \Psh_{\mV}(\mB)\to \Psh_{\mV}(\mA).$$ where $$\overline{(-)}\colon \mV\Fun(\mA,\Psh_{\mV}(\mB))_{\mathrm{proj}}   \to  \mV\Fun^{L}_{\mathrm{Quil}} \bigl( \Psh_{\mV}(\mA),\Psh_{\mV}(\mB) \bigr)$$ is defined in the proof of \ref{lem:homotopical-Yoneda-comparison-M}. By such a construction, there is a canonical isomorphism $$\gamma: F^\vee\y_{\mB} \xrightarrow{\cong} G\y_{\mB}$$ in $\Ho\bigl( \mV\Fun(\mB,\Psh_{\mV}(\mA))_{\mathrm{proj}} \bigr)$. Using the adjointness isomorphism $(\ast)$ of \Cref{lem:diagram-derived-adjunction-M}, the morphism $\gamma$ corresponds to a morphism $\epsilon_{\y}: FF^\vee\y_{\mB} \to \y_{\mB}$ in $\Ho \bigl(\mV\Fun(\mB,\Psh_{\mV}(\mB))_{\mathrm{proj}}\bigr).$ By \ref{lem:homotopical-Yoneda-comparison-M} we obtain a counit morphism $$\epsilon \colon   FF^\vee\to \id$$ in $\Pro_{\mV}(\mB,\mB)$.

We now construct the unit. To this end, consider the component $\epsilon_P$ at some cofibrant and fibrant $P\in\Psh_{\mV}(\mB)$. The pair $F \dashv G$ gives a bijection $$\Ho\Psh_{\mV}(\mB)(FF^\vee P,P) \cong \Ho\Psh_{\mV}(\mA)(F^\vee P,GP) \eqno{(\star)}$$ We define $$\Gamma_P: F^\vee P\to GP$$ to be the map corresponding to $\epsilon_P$ under the adjunction isomorphism $(\star)$. Since $\epsilon_{\y_{\mB}}$ was defined as the map corresponding to $\gamma: F^\vee\y_{\mB}\xrightarrow{\cong}G\y_{\mB}$ under the adjunction $(*)$ in \ref{lem:diagram-derived-adjunction-M}, it follows that $\Gamma_{\y_{\mB}(b)} = \gamma_b$. Now let $P\in\widetilde{\y(\mB)}.$ By definition (\ref{def:homotopy-representables-M}), $P$ is cofibrant-fibrant and isomorphic in $\Ho\Psh_{\mV}(\mB)$ to some $\y_{\mB}(b)$, hence, by naturality, $\Gamma_P$ is an isomorphism because  $\Gamma_{\y_{\mB}(b)} = \gamma_b$ is so. Now let $$X:\mA\to\widetilde{\y(\mB)}$$ the diagram such that $F \cong \overline{X}$. Evaluation on $X$ sends cofibrant-wise equivalences to objectwise weak equivalences, so $\epsilon$ gives $\epsilon_X:FF^\vee X\to X$. Its mate under \Cref{lem:diagram-derived-adjunction-M} is $\Gamma_X: F^\vee X \xrightarrow{\cong} GX$ in $\Ho\bigl(\mV\Fun(\mA,\Psh_{\mV}(\mA))_{\mathrm{proj}}\bigr).$ It is invertible since its components are the isomorphisms $\Gamma_{X(a)}$. Since $F \cong \overline X$, \ref{lem:homotopical-Yoneda-comparison-M} gives an isomorphism $\chi: F\y_{\mA} \xrightarrow{\cong} X$ in $\Ho\bigl(\mV\Fun(\mA,\Psh_{\mV}(\mB))_{\mathrm{proj}}\bigr)$. Let $$u_X: \y_{\mA}\to GX$$ be the map corresponding to $\chi$ under the adjunction isomorphism $(\ast)$ in \ref{lem:diagram-derived-adjunction-M}. We first define the restriction of the unit $\eta$ on $\y_{\mA}$ to be the composite $$\eta_{\y_{\mA}} = \left(  \y_{\mA} \xrightarrow{u_X} GX \xrightarrow{(\Gamma_X)^{-1}} F^\vee X \xrightarrow{F^\vee(\chi^{-1})} F^\vee F\y_{\mA}\right).$$ By \ref{lem:homotopical-Yoneda-comparison-M}, $\eta_{\y_{\mA}}$ uniquely determines a unit $2$-cell $$\eta: \id_{\mA}\Rightarrow F^\vee F$$ in $\Pro_{\mV}(\mA,\mA)$.

To prove the first triangular identity $(\epsilon F)\circ(F\eta)=\id_F$, it is enough, by \ref{lem:homotopical-Yoneda-comparison-M}, to restrict along $\y_{\mA}$. Let $T: F\y_{\mA} \to F\y_{\mA}$ be the restriction of $(\epsilon F)\circ(F\eta)$ along $\y_{\mA}$. Since $\chi: F\y_{\mA} \xrightarrow{\cong} X$ is an isomorphism in $\Pro_{\mV}(\mA,\mB)$, it is enough to prove $\chi\circ T=\chi.$ Writing $F$ on morphisms for $\mathbf L F_*$, the following diagram commutes: $$\begin{tikzcd}[column sep=4.5em,row sep=2.8em] F\y_{\mA} \ar[r,"F(\eta_{\y})"] \ar[dd,equal] & FF^\vee F\y_{\mA} \ar[r,"\epsilon_{F\y_{\mA}}"] \ar[d,"FF^\vee(\chi)"'] & F\y_{\mA} \ar[dd,"\chi"] \\ & FF^\vee X \ar[d,"F(\Gamma_X)"'] & \\ F\y_{\mA} \ar[r,"F(u_X)"'] & \mathbf L F_*(GX) \ar[r,"e_X"'] & X \end{tikzcd}$$ where $e_X:\mathbf L F_*(GX)\to X$ is the counit of $\mathbf L F_*\dashv\mathbf R G_*$, restricted at our $X:\mA\to\widetilde{\y(\mB)}$. Therefore $\chi\circ T=\chi$, proving the first triangle identity.

The other triangular identity is similarly obtained. Restricting along $\y_{\mB}$, we must show that $$F^\vee\y_{\mB} \xrightarrow{\eta_{F^\vee\y_{\mB}}} F^\vee FF^\vee\y_{\mB} \xrightarrow{F^\vee(\epsilon_{\y_{\mB}})} F^\vee\y_{\mB}$$ is the identity. Since $\gamma: F^\vee\y_{\mB} \xrightarrow{\cong} G\y_{\mB}$ is an isomorphism, it is enough to prove that $\gamma$ equals the composite $\gamma \circ F^\vee(\epsilon_{\y_{\mB}}) \circ \eta_{F^\vee\y_{\mB}} .$ The morphism $\gamma$ was defined as the transpose of $\epsilon_{\y_{\mB}}$ under the adjunction $F \dashv G$. Hence, the transpose of $\gamma \circ F^\vee(\epsilon_{\y_{\mB}}) \circ \eta_{F^\vee\y_{\mB}}$ is $\epsilon_{\y_{\mB}}\circ FF^\vee(\epsilon_{\y_{\mB}}) \circ F(\eta_{F^\vee\y_{\mB}})$, obtained as the composition of the maps in the diagram below $$\begin{tikzcd}[column sep=4.5em,row sep=3em] FF^\vee\y_{\mB} \ar[r,"F(\eta_{F^\vee\y_{\mB}})"] \ar[dr,equal] & FF^\vee FF^\vee\y_{\mB} \ar[r,"FF^\vee(\epsilon_{\y_{\mB}})"] \ar[d,"\epsilon_{FF^\vee\y_{\mB}}"'] & FF^\vee\y_{\mB} \ar[d,"\epsilon_{\y_{\mB}}"] \\ & FF^\vee\y_{\mB} \ar[r,"\epsilon_{\y_{\mB}}"'] & \y_{\mB}. \end{tikzcd}$$ The right square commutes by naturality of $\epsilon$, while the left triangle is the other triangular identity that we already proved. Thus, the transpose of  $\gamma \circ F^\vee(\epsilon_{\y_{\mB}}) \circ \eta_{F^\vee\y_{\mB}}$ is $\epsilon_{\y_{\mB}}$, and it therefore must be equal to $\gamma$.
\end{proof}
Keeping in mind the notation in the proof of \Cref{th:equipment-M}, $F \in \mV\Cat^h(\mA,\mB)$ is a vertical arrow, in particular a left Quillen functor $F \colon \operatorname{Psh}_{\mV}(\mA) \to \operatorname{Psh}_{\mV}(\mB)$ with $G$ is its ordinary right Quillen adjoint. There is a natural transformation
\begin{equation}\label{eq:reconstruction-comparison-M} g_P: F^{\vee}P=\overline{G\y_{\mB}}(P) = P\otimes_{\mB}Q(G\y_{\mB}) \to P\otimes_{\mB}G\y_{\mB} \to GP. \end{equation} The first map in the above composition is induced by a cofibrant replacement $q \colon Q(G\y_{\mB}) \to G\y_{\mB}$, and the second is given at $a\in\mA$ by $$\int^{b\in \mB} P(b)\otimes \Psh_{\mV}(\mB)(X(a),\y_{\mB}(b)) \to \Psh_{\mV}(\mB)(X(a),P).$$ Note that $g$ is not necessarily a $2$-cell of $\Pro_{\mV}$, as $G$ is right Quillen rather than left Quillen.

The following is a useful equivalent formulation of the existence of the equipment.
\begin{proposition}\label{prop:equipment-implies-reconstruction-M}
Let $F\in \mV\Cat^h(\mA,\mB)$. Then $F^{\vee}$ is right adjoint to $F$ in $\Pro_{\mV}$ if and only if for every cofibrant-fibrant $P\in\Psh_{\mV}(\mB)$, the map $$g_P:F^{\vee}(P)=\overline{G\y_{\mB}}(P)\to GP$$ is a weak equivalence in $\Psh_{\mV}(\mA)$.

\end{proposition}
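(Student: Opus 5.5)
We prove both implications by reducing to the two isomorphisms used in the proof of \Cref{th:equipment-M}: the comparison $\Gamma_P\colon F^\vee P\to GP$ (the transpose of $\epsilon_P$ under $(\star)$) and the canonical $\gamma\colon F^\vee\y_{\mB}\cong G\y_{\mB}$. The first step is to identify $g_P$ with $\Gamma_P$ in $\Ho\Psh_{\mV}(\mA)$ for every cofibrant-fibrant $P$.

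Both maps are natural in $P$ and agree on representables. Indeed $g_{\y_{\mB}(b)}$ is, via the coYoneda isomorphism $\y_{\mB}(b)\otimes_{\mB}Q(G\y_{\mB})\cong Q(G\y_{\mB})(b)$, the replacement map $q_b$, which is exactly $\gamma_b=\Gamma_{\y_{\mB}(b)}$. To extend this to all $P$, write $F^\vee P=P\otimes_{\mB}Q(G\y_{\mB})$ as a coend over $\mB$ of copies of $Q(G\y_{\mB})(b)$, and use that transposes under $(\star)$ and $(\ast)$ are compatible with the coend (\Cref{lem:diagram-derived-adjunction-M}). With this, both $g_P$ and $\Gamma_P$ become the map out of the coend induced by $\gamma$ together with the action $P(b)\otimes G\y_{\mB}(b)\to GP$. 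I expect this identification to be the main obstacle. The map $g$ is not a $2$-cell of $\Pro_{\mV}$, since $G$ is right Quillen, so we cannot simply invoke \Cref{lem:homotopical-Yoneda-comparison-M}; the comparison has to be carried out by hand at the level of coends.

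\emph{($\Leftarrow$)} Suppose every $g_P$ is a weak equivalence. Then $\Gamma_P$ is invertible for every cofibrant-fibrant $P$, and in particular at the values $X(a)\in\widetilde{\y(\mB)}$. This is exactly the input the proof of \Cref{th:equipment-M} needs to define the unit $\eta_{\y_{\mA}}=F^\vee(\chi^{-1})\circ\Gamma_X^{-1}\circ u_X$. The triangle identities then follow verbatim from the diagrams given there, so $F\dashv F^\vee$ in $\Pro_{\mV}$. Equivalently, one can note that the two derived functors $\mathbf L F^\vee\simeq \mathbf R G$ on $\Ho\Psh_{\mV}(\mB)$, so $F^\vee$ is derived right adjoint to $F$.

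\emph{($\Rightarrow$)} Suppose $F\dashv F^\vee$ in $\Pro_{\mV}$ with counit $\epsilon'$. Since $\Pro_{\mV}$ is a $2$-category, this adjunction passes to homotopy categories: evaluation on cofibrant-fibrant objects sends $2$-cells to maps in $\Ho$, and the triangle identities survive. Hence $\mathbf L F^\vee$ is right adjoint to $\mathbf L F$ on $\Ho\Psh_{\mV}(\mB)\rightleftarrows\Ho\Psh_{\mV}(\mA)$. Uniqueness of adjoints gives a natural isomorphism $\mathbf L F^\vee\cong \mathbf R G$, and this isomorphism is the transpose of $\epsilon'_P$. Right adjoints are unique up to unique isomorphism compatible with the counits, so we may take $\epsilon'=\epsilon$ after identifying both on representables via $\gamma$. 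The transpose of $\epsilon_P$ is then $\Gamma_P$, which is therefore an isomorphism in $\Ho$. By the first step $g_P=\Gamma_P$, so $g_P$ is a weak equivalence.
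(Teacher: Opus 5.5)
Your outline is sound, and your $(\Rightarrow)$ has the same architecture as the paper's. You pass to homotopy categories, use uniqueness of right adjoints to get $\mathbf L F^\vee\cong\mathbf R G$ with components the transposes $\Gamma_P$ of $\epsilon_P$, and then identify $\Gamma_P$ with $[g_P]$. (You can simply use the counit $\epsilon$ of \Cref{th:equipment-M} here; no comparison of two counits ``on representables'' is needed.)

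\textbf{The gap.} The identification $\Gamma_P=[g_P]$, which you flag as the crux, is only gestured at. \Cref{lem:diagram-derived-adjunction-M} says nothing about transposes commuting with coends, so as written this step is unproved. Your reason for doing it by hand is also mistaken. It is true that $g$ is not a $2$-cell of $\Pro_{\mV}$. But its transpose $e\circ Fg\colon F\overline{G\y_{\mB}}\to\id_{\Psh_{\mV}(\mB)}$, with $e$ the strict counit of $F\dashv G$, is an enriched natural transformation between left Quillen functors, hence a $2$-cell $[e\circ Fg]$ of $\Pro_{\mV}(\mB,\mB)$. The argument then runs as follows:
\begin{itemize}
\item The restriction of $[e\circ Fg]$ along $\y_{\mB}$ transposes to $[q]=\gamma$.
\item By definition, $\gamma$ is also the transpose of $\epsilon\y_{\mB}$.
\item Restriction along $\y_{\mB}$ is faithful (\Cref{lem:homotopical-Yoneda-comparison-M}), so $[e\circ Fg]=\epsilon$.
\item Evaluate at a cofibrant-fibrant $P$. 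There the derived bijection $(\star)$ is induced by the strict one, since $F^\vee P$ is cofibrant and $P$ is fibrant. This yields $\Gamma_P=[g_P]$.
\end{itemize}
This is exactly the paper's argument. Your coend route, if completed, would have to establish the same fact: $\epsilon$ is represented by $\Lan_{\y_{\mB}}$ of the strict transpose $e\y_{\mB}\circ Fq$, whose component at $P$ is, after the canonical identifications, $e_P\circ F(g_P)$.

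\textbf{The converse.} For $(\Leftarrow)$, re-running the construction of \Cref{th:equipment-M} is legitimate, but it makes the hypothesis idle. That proof already obtains invertibility of $\Gamma_{X(a)}$ for $X(a)\in\widetilde{\y(\mB)}$ from $\gamma$ by naturality, so the adjunction holds unconditionally and your Step 1 is not needed there. The paper instead derives $F\dashv F^\vee$ directly from the hypothesis. It builds natural bijections $\Pro_{\mV}(\mI,\mB)(FH,K)\cong\Pro_{\mV}(\mI,\mA)(H,\overline{G\y_{\mB}}K)$ for every test object $\mI$, applying the hypothesis at $P=Q(K\y_{\mI})(i)$.

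Your alternative, ``$\mathbf L F^\vee\simeq\mathbf R G$, so $F^\vee$ is a derived right adjoint'', does not suffice on its own. An adjunction of derived functors on $\Ho\Psh_{\mV}(-)$ does not automatically lift to $\Pro_{\mV}$. The $2$-cells of $\Pro_{\mV}$ are morphisms in $\Ho\bigl(\mV\Fun(\mA,\Psh_{\mV}(\mB))_{\mathrm{proj}}\bigr)$, not natural transformations of derived functors. Testing against all $\mI$ is what reaches that level.
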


\begin{proof}
We first prove the necessity. Consider the pseudo-functor from the $2$-category $\Pro_{\mV}$ to $\CAT$ which sends $\mC$ to $\Ho\Psh_{\mV}(\mC)$, sends a $1$-cell represented by a left Quillen functor $F$ to its left derived functor $\mathbf L F$, and a $2$-cell to the induced natural transformation. Since pseudo-functors preserve adjunctions, the adjointness relation $F\dashv_{\Pro_{\mV}}\overline{G\y_{\mB}}$ of \Cref{th:equipment-M} gives an adjunction $\mathbf L F\dashv_{\CAT}\mathbf L\overline{G\y_{\mB}}$. Independently, the Quillen adjunction $F:\Psh_{\mV}(\mA)\rightleftarrows\Psh_{\mV}(\mB):G$ gives an adjunction $\mathbf L F\dashv_{\CAT}\mathbf R G$. Let $\epsilon:F\overline{G\y_{\mB}}\to \id_{\Psh_{\mV}(\mB)}$ be the equipment counit, regarded as a morphism in $\Pro_{\mV}(\mB,\mB)$, and let $\epsilon^{\mathrm{ho}}: \mathbf L F\,\mathbf L\overline{G\y_{\mB}} \to\id_{\Ho\Psh_{\mV}(\mB)}$ be its image under the pseudofunctor above. Denote by $e^{\mathrm{ho}}: \mathbf L F\,\mathbf R G\to \id_{\Ho\Psh_{\mV}(\mB)}$ the counit at the level of homotopy categories induced by the one pertaining to the Quillen adjunction $e:FG\to\id_{\Psh_{\mV}(\mB)}$. By uniqueness of right adjoints, there is a unique natural isomorphism $$\vartheta: \mathbf L\overline{G\y_{\mB}} \xrightarrow{\cong} \mathbf R G$$ such that $e^{\mathrm{ho}}\circ\mathbf L F(\vartheta) =\epsilon^{\mathrm{ho}}$ as natural transformations of endofunctors of $\Ho\Psh_{\mV}(\mB)$. To conclude, it suffices to prove that, for every cofibrant-fibrant $P$, the component $\vartheta_{[P]}$ is the equivalence class of $g_P:\overline{G\y_{\mB}}P\to GP$ \eqref{eq:reconstruction-comparison-M}, since the latter would then have to be a weak equivalence as required. To this end, we first consider the restriction on $\y_{\mB}$. The composite $e\circ Fg: F\overline{G\y_{\mB}}\to\id_{\Psh_{\mV}(\mB)}$ ($e$ being the counit of $F \dashv G$) is an enriched natural transformation between left Quillen functors, and hence defines a morphism $[e\circ Fg]: F\overline{G\y_{\mB}}\to\id$ in $\Pro_{\mV}(\mB,\mB)$. By \Cref{lem:diagram-derived-adjunction-M} we have an isomorphism $$\Ho\bigl( \mV\Fun(\mB,\Psh_{\mV}(\mB))_{\mathrm{proj}} \bigr) \bigl(F\overline{G\y_{\mB}}\y_{\mB},\y_{\mB}\bigr) \cong \Ho\bigl( \mV\Fun(\mB,\Psh_{\mV}(\mA))_{\mathrm{proj}} \bigr) \bigl(\overline{G\y_{\mB}}\y_{\mB},G\y_{\mB}\bigr).$$ Under this isomorphism, the restriction of $[e\circ Fg]$ along $\y_{\mB}$ corresponds to $[g\y_{\mB}]$, which in turn identifies with the isomorphism $\gamma: \overline{G\y_{\mB}}\y_{\mB} \xrightarrow{\cong} G\y_{\mB}$ coming from the proof of \ref{lem:homotopical-Yoneda-comparison-M}; on the other hand, by definition, the restriction of $\epsilon$ along $\y_{\mB}$ is the element corresponding to $\gamma$ under the adjunction isomorphism above. Since restriction along $\y_{\mB}$ is faithful by \ref{lem:homotopical-Yoneda-comparison-M}, we obtain $[e\circ Fg]=\epsilon$ in $\Pro_{\mV}(\mB,\mB)$. Let now $P\in\Psh_{\mV}(\mB)$ be cofibrant-fibrant, so that $\overline{G\y_{\mB}}(P)$ is also cofibrant; since $G$ is right Quillen, $GP$ is fibrant. Hence the adjunction on homotopy categories is $$\Ho\Psh_{\mV}(\mB) \bigl(F\overline{G\y_{\mB}}(P),P\bigr) \cong \Ho\Psh_{\mV}(\mA) \bigl(\overline{G\y_{\mB}}(P),GP\bigr).$$ Under this isomorphism, the class $[e_P\circ F(g_P)]$ corresponds to $[g_P]$. Since we have proven that $[e\circ Fg]=\epsilon$, the identity $e^{\mathrm{ho}}\circ\mathbf L F(\vartheta) =\epsilon^{\mathrm{ho}}$, uniquely defining $\vartheta$, forces $\vartheta_{[P]}=[g_P]$ in $\Ho\Psh_{\mV}(\mA)$.

Conversely, we take as an assumption that for the given representable $\mV$-profunctor $F=\overline X:\mA\nrightarrow\mB$, with right Quillen adjoint $G$, the comparison morphism $g_P: F^{\vee}P=\overline{G\y_{\mB}}(P)\to GP$ is a weak equivalence for every cofibrant-fibrant $P\in\Psh_{\mV}(\mB)$, and we derive from this the adjointness relation $$F\dashv_{\Pro_{\mV}}F^{\vee}$$ Indeed, let $\mI$ be locally fibrant-cofibrant, and let $H\in\Pro_{\mV}(\mI,\mA)$ and $K\in\Pro_{\mV}(\mI,\mB)$.  Choose projectively cofibrant-fibrant replacements $Q(H\y_{\mI})$ and $Q(K\y_{\mI})$ of $H\y_{\mI}:\mI\to\Psh_{\mV}(\mA)$ and $ K\y_{\mI}:\mI\to\Psh_{\mV}(\mB)$. Then, recalling the definition of $\Pro_{\mV}$ (\ref{def:Pro-M-hom}) and \Cref{lem:homotopical-Yoneda-comparison-M}, $$\begin{aligned} \Pro_{\mV}(\mI,\mB)(FH,K) &\cong \Ho\bigl(\mV\Fun(\mI,\Psh_{\mV}(\mB))_{\mathrm{proj}}\bigr) (FQ(H\y_{\mI}),Q(K\y_{\mI}))                                                   \\ &\cong \Ho\bigl(\mV\Fun(\mI,\Psh_{\mV}(\mA))_{\mathrm{proj}}\bigr) (Q(H\y_{\mI}),GQ(K\y_{\mI}))                                                   \\ &\cong \Ho\bigl(\mV\Fun(\mI,\Psh_{\mV}(\mA))_{\mathrm{proj}}\bigr) \bigl(Q(H\y_{\mI}),\overline{G\y_{\mB}}Q(K\y_{\mI})\bigr)        \\ & \cong  \Pro_{\mV}(\mI,\mA)\bigl(H,\overline{G\y_{\mB}}K\bigr). \end{aligned}$$ The second isomorphism is \Cref{lem:diagram-derived-adjunction-M}. The third isomorphism is where we use our assumption that $g_P:\overline{G\y_{\mB}}P\to GP$ is a weak equivalence provided that $P=Q(K\y_{\mI})(i)$ is cofibrant-fibrant, which is true by \Cref{cor:projectively-cofibrant-objectwise-cofibrant-M}.
\end{proof}
\subsubsection{Closedness}\label{prop:closed-equipment-M}
The equipment $\mV\Cat^h\to\Pro_{\mV}$ is \emph{closed} in the following $\Ho\mV$-enriched sense. Given a cospan of horizontal arrows $$\begin{tikzcd}[row sep=large, column sep=large] \mA \arrow[dr, "Z"', "\shortmid"{marking}] && \mB \arrow[dl, "H", "\shortmid"{marking}] \\ & \mC & \end{tikzcd}$$ there is a horizontal arrow $$[H,Z] : \mA\nrightarrow\mB$$ and a $2$-cell $\mathrm{ev}_H \colon H[H,Z] \to Z$ such that for any $X$ as in the diagram $$\begin{tikzcd}[row sep=large, column sep=large] \mA \arrow[rr, bend left=40, "X", "\shortmid"{marking}, ""{name=X, below}] \arrow[rr, "{[H,Z]}"', "\shortmid"{marking}, ""{name=K, above}] \arrow[dr, "Z"', "\shortmid"{marking}, ""{name=Z, above}] && \mB \arrow[dl, "H", "\shortmid"{marking}, ""{name=H, above}] \arrow[Rightarrow, from=X, to=K, shorten <=2pt, shorten >=2pt] \\ & \mC \arrow[Rightarrow, from=H, to=Z, shorten <=13pt, shorten >=13pt, "\mathrm{ev}"'] & \end{tikzcd}$$ composition with $\mathrm{ev}_H$ induces natural isomorphisms in $\Ho\mV$, $$\underline{\Pro_{\mV}}(\mA,\mC)(HX,Z)\cong \underline{\Pro_{\mV}}(\mA,\mB) \bigl(X,[H,Z]\bigr),$$ and similarly for every span of horizontal arrows $$\begin{tikzcd}[row sep=large, column sep=large] & \mA \arrow[dl, "X"', "\shortmid"{marking}] \arrow[dr, "Z", "\shortmid"{marking}] & \\ \mB && \mC \end{tikzcd}$$ there is $$\{X,Z\} : \mB\nrightarrow\mC$$ such that $$\underline{\Pro_{\mV}}(\mA,\mC)(HX,Z) \cong \underline{\Pro_{\mV}}(\mB,\mC) \bigl(H,\{X,Z\}\bigr)$$ naturally as objects of $\Ho \mV$.

At the level of $\pi_0$, we are saying that the equipment's composition admits a right adjoint separately in each variable \cite{Law71}.
\begin{proof}
Let $X:\mA\nrightarrow\mB$, $H:\mB\nrightarrow\mC$, and $Z:\mA\nrightarrow\mC$ be horizontal arrows as above. By definition these are given as left Quillen functors between the respective categories of presheaves. We choose a projective cofibrant replacement $\widetilde X\xrightarrow{\sim}X\y_{\mA}.$ There are enriched Quillen adjunctions $$\widetilde X\otimes_{\mB}-: \mV\Fun(\mB,\Psh_{\mV}(\mC))_{\mathrm{proj}} \rightleftarrows \mV\Fun(\mA,\Psh_{\mV}(\mC))_{\mathrm{proj}} : G^r(\widetilde X,-)$$ and $$-\otimes_{\mB}(H\y_{\mB}): \mV\Fun(\mA,\Psh_{\mV}(\mB))_{\mathrm{proj}} \rightleftarrows \mV\Fun(\mA,\Psh_{\mV}(\mC))_{\mathrm{proj}} : G^{\ell}(H,-),$$ where $G^{r}(\widetilde X,P):\mB\to\Psh_{\mV}(\mC)$ and $G^{\ell}(H,P):\mA\to\Psh_{\mV}(\mB)$ are given as $$G^{r}(\widetilde X,P)(b)(c) = \underline{\mV\Fun(\mA,\mV)} \bigl(\widetilde X(-)(b),P(-)(c)\bigr)$$ and $$G^{\ell}(H,P)(a)(b) = \underline{\Psh_{\mV}(\mC)}\bigl(H\y_{\mB}(b),P(a)\bigr).$$ We now fix a projectively fibrant replacement $Z\y_{\mA}\xrightarrow{\sim}RZ$ in $\mV\Fun(\mA,\Psh_{\mV}(\mC))_{\mathrm{proj}}$ and we introduce $$\{X,Z\} \coloneqq \overline{G^r(\widetilde X,RZ)} : \mB\nrightarrow\mC$$ and $$[H,Z] \coloneqq \overline{G^\ell(H,RZ)} : \mA\nrightarrow\mB.$$ Passing to the induced $\Ho\mV$-enriched adjunctions \cite[16.8]{Shu06} and using \Cref{lem:homotopical-Yoneda-comparison-M}, we obtain natural isomorphisms in $\Ho\mV$ $$\underline{\Pro_{\mV}}(\mA,\mC)(HX,Z) \cong \underline{\Pro_{\mV}}(\mB,\mC) \bigl(H,\{X,Z\}\bigr)$$ and $$\underline{\Pro_{\mV}}(\mA,\mC)(HX,Z) \cong \underline{\Pro_{\mV}}(\mA,\mB) \bigl(X,[H,Z]\bigr).$$ 
\end{proof}

\section{Weak category theory of enriched categories}\label{sec:weakcategorytheory}

Let $f:\mC\to\mB$ be an ordinary $\mV$-functor. We want to see it as a vertical arrow in the equipment. To this end, we consider the composition $\y_{\mB}f$: it is cofibrant-valued, hence its enriched left Kan extension is left Quillen and we may take $\overline{\y_{\mB}f}=\Lan_{\y_{\mC}}\y_{\mB}f$, where $\overline{(-)}$ is the functor defined in \Cref{lem:homotopical-Yoneda-comparison-M}.

\begin{notation}\label{not:verticalstrictfunctor}
We denote $$f_!=\overline{\y_{\mB}f}$$ the vertical arrow associated to a strict $\mV$-functor $f$.
\end{notation}

Its right Quillen adjoint is restriction along $f$, denoted $\res_f: \Psh_{\mV}(\mB)\to\Psh_{\mV}(\mC)$. The composition $\res_f\y_{\mB}:\mB\to\Psh_{\mV}(\mC)$ is the functor $b\to\mB(f-,b)$.

\begin{notation}\label{not:conjointofvertical}
The conjoint of the vertical arrow $f_!$ is defined to be $(f_!)^\vee=\overline{\res_f\y_{\mB}}$. We abbreviate the notation as $$f^!:=(f_!)^\vee.$$
\end{notation}

\begin{remark}
Not all vertical arrows in $\mV\Cat^h$ arise this way from ordinary $\mV$-functors. A general vertical arrow is interpreted as a \emph{weak $\mV$-functor}, while those of type $f_!$ are interpreted as \emph{strict $\mV$-functors}. 
\begin{notation}\label{conv:companion} In any equipment, a vertical arrow $F$ can be seen as an horizontal one via its companion $F_*$. In the equipment constructed in \Cref{th:equipment-M}, the functor $(-)_*$ is the identity (it is an inclusion). Therefore we do not distinguish notationally between $F$ and its companion, and between vertical and horizontal identities (both are $\id_{\mC}$).

\end{notation}

\end{remark}

\subsection{Homotopy colimits are formal colimits}\label{sec:colim}

\subsubsection{Weighted colimits in an equipment}
Recall that, in ordinarily enriched category theory (for simplicity, let us even assume enriched over $\Set$), a weight is taken to be a functor $W:\mC^{\op}\to\Set$; it is useful to consider at once a family of weights $W_a$ parametrized by a small category $\mA$, so that a weight in this broader sense is a functor $W:\mC^{\op}\times\mA\to\Set$ (in the language adopted here, it is a horizontal arrow $W \colon \mA \nrightarrow \mC$ in the equipment of ordinary categories); the weighted colimit of a chosen diagram $D:\mC\to\mE$, when it exists, is therefore a functor $a \mapsto \colim^{W_a}D$, and, given a functor $L:\mA\to\mE$, a $W$-weighted cocone on $D$ with vertex $L$ is a natural transformation $\{ \lambda_{c,a} \colon W(c,a)\to\mE(Dc,La)\}_{c,a}$.

With these considerations in mind we approach the following definitions.
\begin{definition}
Let $D \colon \mC \to \mE$ and $L \colon \mA \to \mE$ be vertical arrows, and let $W \colon \mA \nrightarrow \mC$ be a horizontal arrow. A \emph{cocone on $D$ weighted by $W$ with vertex $L$} is a square $$\begin{tikzcd}[column sep=3.5em,row sep=3em] \mA \ar[r,"W"] \ar[d,"L"'] & \mC \ar[d,equal] \\ \mE \ar[r,"D^{\vee}"'] & \mC \arrow[phantom,from=1-1,to=2-2,"\Downarrow\lambda" description] \end{tikzcd}$$ We denote $$\operatorname{Cocones}^W(D,L) \coloneqq \underline{\Pro_{\mV}}(\mA,\mC)\bigl(W,D^{\vee}L\bigr) \in \operatorname{Ho}\mV$$ \emph{the homotopy type of $W$-weighted cocones on $D$ with vertex $L$}. 
\end{definition} 
We recall that $\Pro_{\mV}(\mA,\mC)(X,Y)=\operatorname{\pi_0}\underline{\Pro_{\mV}}(\mA,\mC)(X,Y)$, where $\operatorname{\pi_0}\coloneqq \bigl(\operatorname{Ho}\mV\bigr) \bigl(\tu,-\bigr)$.  There is an isomorphism in $\Ho \mV$ $$(-)^{\#} \colon \operatorname{Cocones}^W(D,L) \xrightarrow{\cong} \underline{\Pro_{\mV}}(\mE,\mA)\bigl(L^{\vee},\left[W,D^{\vee}\right]\bigr)$$ owing to the adjointness $L \dashv L^{\vee}$ and the closedness of the equipment. At the level of $\pi_0$, the bijection is obtained precomposing a cocone $\lambda \colon W \to D^{\vee}L$ with $L^\vee$, and then composing with the counit map $LL^{\vee} \to \id_{\mE}$, so as to obtain a $2$-cell $WL^{\vee} \Rightarrow D^{\vee}$, which by closedness corresponds to a morphism that we denote $$\lambda^{\#}\colon L^{\vee} \to \left[W,D^{\vee}\right].$$ 

Next, for a $\mV$-category $\mJ$, a horizontal arrow $H:\mJ\nrightarrow\mA$, and a vertical arrow $V:\mJ\to\mE$ together with a $2$-cell $\gamma \colon LH \rightarrow V$, there is a cocone on $D$ with vertex $V$ weighted by $WH$, obtained as the pasted diagram $$\begin{tikzcd}[column sep=3.5em,row sep=3em] \mJ \ar[r,"H"] \ar[d,"V"'] & \mA \ar[r,"W"] \ar[d,"L"'] & \mC \ar[d,equal] \\ \mE \ar[r,equal] & \mE \ar[r,"D^{\vee}"'] & \mC \arrow[phantom,from=1-1,to=2-2, "\Downarrow\gamma" description] \arrow[phantom,from=1-2,to=2-3, "\Downarrow\lambda" description] \end{tikzcd}$$ This way composition with $\lambda$ induces a morphism in $\operatorname{Ho}\mV$ $$\lambda_{H,V}^{\circ}\colon \underline{\Pro_{\mV}}(\mJ,\mE)(LH,V) \to \operatorname{Cocones}^{WH}(D,V).$$ Moreover, taking $\mJ=\mA$, and $H=\id_{\mA}$, we see that $$\left[W,D^\vee V\right] \colon \mA \nrightarrow \mA$$ is a horizontal arrow whose global sections are precisely $W$-cocones on $D$ with vertex $V$, and $\lambda_{H,V}^{\circ}$ above is the morphism $\lambda_{H,V}^{\circ} = \underline{\Pro_{\mV}}(\mJ,\mA)(H,\rho_V)$ induced by the map $$\rho_V \colon \left[L,V\right] \to \left[W,D^\vee V\right]$$ which, under closedness, is the mate of $WL^{\vee}V \xrightarrow{\lambda L^{\vee}V} D^{\vee}LL^{\vee}V \xrightarrow{D^{\vee}\epsilon V} D^{\vee}V$, using that $L^{\vee}V \cong \left[L,V\right]$, the latter isomorphism owing to $L \dashv L^{\vee}$ (since $L$ is vertical) and the closedness property of the equipment; for the same reason, since $V$ is vertical, one has that $\left[W,D^{\vee}V\right] \cong \left[W,D^{\vee}\right]V$, so that $\rho_V$ identifies with $\lambda^{\#}V$. Finally, Yoneda lemma ensures that bijectivity of $\pi_0\lambda_{H,V}^{\circ}$ for every $H,V$ implies invertibility of $\rho_V$.

We have proven the following.
\begin{proposition}\label{prop:weightedcolimdef}

Let $\lambda\in\pi_0\operatorname{Cocones}^W(D,L)$. The following are equivalent:
\begin{itemize}
\item[(i)] the morphism ${\lambda}^{\#}: L^\vee\to\left[W,D^\vee\right]$ is invertible in $\Pro_{\mV}(\mE,\mA)$;
\item[(ii)] for every $\mJ$ and every vertical arrow $V$, the map $\rho_V $ is invertible in $\Pro_{\mV}(\mJ,\mA)$;
\item[(iii)] for every $\mJ$, $H$, and $V$ as above, the morphism $\lambda_{H,V}^\circ$ is an isomorphism in $\Ho\mV$;
\item[(iv)] for every $\mJ$, $H$, and $V$ as above, $\pi_0\lambda_{H,V}^\circ$ is bijective;
\end{itemize}
\end{proposition}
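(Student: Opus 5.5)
The plan is to prove the cycle (i)$\Rightarrow$(ii)$\Rightarrow$(iii)$\Rightarrow$(iv)$\Rightarrow$(i), using the identifications set up just before the statement. These are the isomorphism $(-)^{\#}$ coming from $L\dashv L^\vee$ together with closedness (\ref{prop:closed-equipment-M}), and the isomorphisms $L^\vee V\cong[L,V]$ and $[W,D^\vee V]\cong[W,D^\vee]V$ for vertical $V$. Under the latter, $\rho_V$ is identified with $\lambda^{\#}V$, the whiskering of $\lambda^{\#}$ by the horizontal arrow $V$.

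For (i)$\Rightarrow$(ii): whiskering an invertible $2$-cell of $\Pro_{\mV}(\mE,\mA)$ by $V\colon\mJ\nrightarrow\mE$ yields an invertible $2$-cell, since horizontal composition is a functor on hom-categories. Hence $\rho_V\cong\lambda^{\#}V$ is invertible.

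For (ii)$\Rightarrow$(iii): we have $\lambda^\circ_{H,V}=\underline{\Pro_{\mV}}(\mJ,\mA)(H,\rho_V)$, after rewriting the source via
\[
\underline{\Pro_{\mV}}(\mJ,\mE)(LH,V)\cong\underline{\Pro_{\mV}}(\mJ,\mA)(H,[L,V])
\]
and the target via
\[
\operatorname{Cocones}^{WH}(D,V)=\underline{\Pro_{\mV}}(\mJ,\mC)(WH,D^\vee V)\cong\underline{\Pro_{\mV}}(\mJ,\mA)(H,[W,D^\vee V]).
\]
The one point to check is that these two closedness isomorphisms are compatible with composing with $\lambda$, that is, that the square formed by the pasting and by $\rho_V$ commutes. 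This follows from the naturality of the closedness isomorphisms (they are induced by composition with $\mathrm{ev}$) and from $\rho_V$ being defined as a mate. Once that square is known to commute, an isomorphism $\rho_V$ induces an isomorphism on the $\Ho\mV$-enriched homs. Then (iii)$\Rightarrow$(iv) is immediate by applying $\pi_0=(\Ho\mV)(\tu,-)$.

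The step that needs care is (iv)$\Rightarrow$(i). I will take $\mJ=\mE$ and $V=\id_{\mE}$, which is vertical, so that $\rho_{\id}$ identifies with $\lambda^{\#}$ itself. Condition (iv) says that
\[
\Pro_{\mV}(\mE,\mA)(H,L^\vee)\to\Pro_{\mV}(\mE,\mA)(H,[W,D^\vee])
\]
is bijective for all $H\colon\mE\nrightarrow\mA$, naturally in $H$. By the ordinary Yoneda lemma in the category $\Pro_{\mV}(\mE,\mA)$, this forces $\lambda^{\#}$ to be invertible.

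The main obstacle is again naturality: one must confirm that the bijection in (iv) really is postcomposition with $\lambda^{\#}$, which amounts to the same compatibility verified in (ii)$\Rightarrow$(iii). I would handle both at once by establishing a single lemma: for all $H,V$, the morphism $\lambda^\circ_{H,V}$ corresponds to $\underline{\Pro_{\mV}}(\mJ,\mA)(H,\lambda^{\#}V)$ under the closedness and adjunction isomorphisms. This is checked at the level of $2$-cells via the triangle identities for $L\dashv L^\vee$.
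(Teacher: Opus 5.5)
Your proposal is correct and is essentially the paper's argument. The paper likewise identifies $\lambda^{\circ}_{H,V}$ with $\underline{\Pro_{\mV}}(\mJ,\mA)(H,\rho_V)$ and $\rho_V$ with the whiskering $\lambda^{\#}V$ (via $L\dashv L^{\vee}$ and closedness), then concludes by whiskering in one direction and the Yoneda lemma in the other. The differences are cosmetic: you close the cycle directly at (i) by taking $\mJ=\mE$ and $V=\id_{\mE}$, which the paper leaves implicit after deducing invertibility of $\rho_V$, and you make explicit the compatibility of the closedness isomorphisms with pasting by $\lambda$, which the paper only asserts.
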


\begin{definition}\label{def:weightedcolimdef}
\emph{A cocone $\lambda$ exhibits $L$ as the $W$-weighted colimit of $D$} if the equivalent conditions of \Cref{prop:weightedcolimdef} are satisfied. We write $L=\colim^W D$.
\end{definition}

Weighted colimits in an equipment satisfy Fubini's theorem. \cite[Proposition~6]{Woo82}.

\begin{proposition}[Fubini's theorem]
\label{prop:associativity-weighted-colimits-M}
Let $$W:\mA\nrightarrow\mC, \qquad F:\mC\to\mD, \qquad E:\mD\to\mE$$ be respectively a horizontal arrow and two vertical arrows. $$\colim^W(EF)\cong\colim^{FW}E$$ whenever either side exists.
\end{proposition}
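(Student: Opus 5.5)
We compare the two universal properties through characterization (i) of \Cref{prop:weightedcolimdef}: a cocone $\lambda$ exhibits $L=\colim^W(EF)$ exactly when $\lambda^{\#}\colon L^\vee\to[W,(EF)^\vee]$ is invertible. Likewise a cocone $\mu$ exhibits $L=\colim^{FW}E$ exactly when $\mu^{\#}\colon L^\vee\to[FW,E^\vee]$ is invertible. Here $FW$ denotes the horizontal composite of $W$ with the companion of $F$, which by \Cref{conv:companion} is $F$ itself. The plan is to identify both the targets and the cocone objects.

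\emph{Cocone objects.} Conjoints compose contravariantly up to canonical isomorphism, $(EF)^\vee\cong F^\vee E^\vee$, by uniqueness of right adjoints in $\Pro_{\mV}$. Then adjunction $F\dashv F^\vee$ from \Cref{th:equipment-M}, in its $\Ho\mV$-enriched form, gives
$$\operatorname{Cocones}^W(EF,L)=\underline{\Pro_{\mV}}(\mA,\mC)\bigl(W,F^\vee E^\vee L\bigr)\cong\underline{\Pro_{\mV}}(\mA,\mD)\bigl(FW,E^\vee L\bigr)=\operatorname{Cocones}^{FW}(E,L).$$
This isomorphism is natural in $L$. For the enriched form of the adjunction, use that the bijection on $\pi_0$ is induced by composition with unit and counit $2$-cells, which are morphisms of $\Ho\mV$-enriched hom-objects; alternatively, replace $\mA$ by arbitrary $\mI$ and argue representably.

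\emph{Targets.} Next I would build an isomorphism $[W,F^\vee E^\vee]\cong[FW,E^\vee]$ in $\Pro_{\mV}(\mE,\mA)$. By the closedness of \Cref{prop:closed-equipment-M}, for any $X\colon\mE\nrightarrow\mA$ we have
$$\underline{\Pro_{\mV}}(X,[W,F^\vee E^\vee])\cong\underline{\Pro_{\mV}}(WX,F^\vee E^\vee)\cong\underline{\Pro_{\mV}}(FWX,E^\vee)\cong\underline{\Pro_{\mV}}(X,[FW,E^\vee]).$$
The middle step again uses $F\dashv F^\vee$, together with associativity $(FW)X\cong F(WX)$. By the Yoneda lemma in $\Pro_{\mV}(\mE,\mA)$, applied after $\pi_0$, this yields the desired isomorphism.

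\emph{Compatibility.} The remaining step, which I expect to be the main obstacle, is to check that these identifications intertwine the two $(-)^{\#}$ constructions. Concretely, if $\mu\leftrightarrow\lambda$ under the cocone isomorphism, then $\mu^{\#}$ should equal $\lambda^{\#}$ composed with the target isomorphism. Both $(-)^{\#}$ maps are defined by precomposing with $L^\vee$, composing with the counit of $L\dashv L^\vee$, and then transposing by closedness. So the check reduces to a mate calculation: whiskering by $L^\vee$ and the $L$-counit commute with transposition across $F\dashv F^\vee$, by naturality of that adjunction's bijections in the source variable, and with the closedness transposition, by naturality of $\mathrm{ev}$.

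I would carry out this check at the level of $\pi_0$, where every map is a bijection natural in the relevant variables. There it is a standard diagram chase, in the style of the triangle-identity arguments in the proof of \Cref{th:equipment-M}. Once compatibility holds, $\lambda^{\#}$ is invertible if and only if $\mu^{\#}$ is. Moreover, cocones correspond bijectively, so existence of either colimit yields the other with the same vertex $L$. This gives $\colim^W(EF)\cong\colim^{FW}E$.
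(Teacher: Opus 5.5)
Your proposal is correct and is essentially the paper's own argument. You transpose the cocone along $F\dashv F^\vee$ using $(EF)^\vee\cong F^\vee E^\vee$, and you identify $[W,F^\vee E^\vee]\cong[FW,E^\vee]$ via closedness. The compatibility step you single out as the main obstacle is what the paper asserts, without detail, when it says that $\theta^{\#}$ is $\lambda^{\#}$ followed by this isomorphism; it does go through by the interchange and mate computation you describe.
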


\begin{proof}
Using the adjunction $F\dashv F^\vee$ and (i) in \ref{prop:weightedcolimdef}, a cocone $\lambda: W\to(EF)^\vee L \cong F^\vee E^\vee L$ exhibits $L$ as the $W$-weighted colimit of $EF$ if and only if its mate $\theta \colon FW \xrightarrow{F\lambda} FF^\vee E^\vee L \xrightarrow{\epsilon_F E^\vee L} E^\vee L$ exhibits $L$ as the $FW$-weighted colimit of $E$. Indeed, the map $\theta^{\#}\colon L^{\vee} \to \left[FW,E^{\vee}\right]$ can be defined in terms of $\lambda^{\#}$ as the composition $L^{\vee} \xrightarrow{\lambda^{\#}} \left[W,(EF)^{\vee}\right] \cong \left[W,F^{\vee}E^{\vee}\right]\cong \left[FW,E^{\vee}\right]$.
\end{proof}
Fubini's theorem relates composition in the equipment to weighted colimits as follows. 
\begin{corollary}\label{cor:formal-weighted-colimit-M}
Let $W:\mA\nrightarrow\mC,$ and $D:\mC\to\mE,$ be an horizontal and a vertical arrow respectively. We have $$\colim^W D\cong\colim^{DW}\id_{\mE}$$ whenever either side exists.
\end{corollary}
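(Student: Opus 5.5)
This is a direct specialization of \Cref{prop:associativity-weighted-colimits-M}. Take the data there to be the horizontal arrow $W:\mA\nrightarrow\mC$, the vertical arrow $F:=D:\mC\to\mE$, and the vertical arrow $E:=\id_{\mE}:\mE\to\mE$. Fubini's theorem then gives
\[
\colim^W(\id_{\mE}D)\cong\colim^{DW}\id_{\mE}
\]
whenever either side exists. Here $DW$ is the horizontal composite of $W$ with the companion of $D$, which by \Cref{conv:companion} we write simply as $D$.

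It remains to identify $\colim^W(\id_{\mE}D)$ with $\colim^W D$. Since vertical composition in $\mV\Cat^h$ is strict, $\id_{\mE}D=D$ on the nose, so the two colimit problems are literally the same. To be safe, we can also argue at the level of conjoints. We have $(\id_{\mE}D)^\vee\cong D^\vee\id_{\mE}^\vee$, and $\id_{\mE}^\vee\cong\id_{\mE}$, because the identity is its own right adjoint in the bicategory $\Pro_{\mV}$. The resulting isomorphism $\bigl[W,(\id_{\mE}D)^\vee\bigr]\cong\bigl[W,D^\vee\bigr]$ carries $\lambda^{\#}$ to $\lambda^{\#}$ under the identification of cocones. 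Condition (i) of \Cref{prop:weightedcolimdef} is therefore unaffected.

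The only point needing care is exactly this bookkeeping of unitors. We must check that the isomorphism of cocone objects $\operatorname{Cocones}^W(\id_{\mE}D,L)\cong\operatorname{Cocones}^W(D,L)$ is compatible with the maps $(-)^{\#}$. This follows from naturality of the closedness isomorphisms in \Cref{prop:closed-equipment-M}, and we expect no real obstacle here.

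Finally, we note the statement symmetrically. If a cocone exhibits $L$ as $\colim^{DW}\id_{\mE}$, its mate under $D\dashv D^\vee$ (as in the proof of Fubini) exhibits $L$ as $\colim^W D$, and conversely. Existence of either colimit is therefore equivalent to existence of the other, with the same vertex $L$.
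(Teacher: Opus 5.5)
Your proposal is correct and matches the paper's argument: the paper states this corollary as an immediate specialization of Fubini's theorem (\Cref{prop:associativity-weighted-colimits-M}) with $F=D$ and $E=\id_{\mE}$, using $\id_{\mE}D=D$ in the strict $2$-category $\mV\Cat^h$. Your additional bookkeeping about $\id_{\mE}^\vee$, the unitors and the compatibility of $(-)^{\#}$ is harmless but not needed, since that strict equality already makes the two colimit problems literally the same.
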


\subsubsection{Pointwise Kan extensions}\label{def:pointwise-left-Kan-extension-M}
Pointwise Kan extensions in $2$-categories were introduced  by Street \cite{Str74}. In the language of equipments, the definition below is due to Wood \cite{Woo82}.

Let $J:\mC\to\mP$, $D:\mC\to\mE$, and $L:\mP\to\mE$ be vertical arrows, and let $\eta:D\to LJ$ be a $2$-cell. Under the companion-conjoint adjunction for $J$, the cell $\eta$ corresponds to a cocone $\widehat\eta:J^\vee\to D^\vee L$: $$\begin{tikzcd}[column sep=3.5em,row sep=3em] \mC \ar[r,"D"] \ar[d,"J"'] & \mE \ar[d,equal] \\ \mP \ar[r,"L"'] & \mE \arrow[phantom,from=1-1,to=2-2,"\Downarrow\eta" description] \end{tikzcd} \quad \begin{tikzcd}[column sep=3.5em,row sep=3em] \mP \ar[r,"J^\vee"] \ar[d,"L"'] & \mC \ar[d,equal] \\ \mE \ar[r,"D^\vee"'] & \mC \arrow[phantom,from=1-1,to=2-2,"\Downarrow\widehat\eta" description] \end{tikzcd}$$ We say that $\eta$ exhibits $L$ as the \emph{pointwise left Kan extension} of $D$ along $J$ if $\widehat\eta$ exhibits $L$ as the $J^\vee$-weighted colimit of $D$. We then write $L=\Lan_JD$. In formulas $$\Lan_JD \coloneqq \colim^{J^\vee}D$$

\subsubsection{Homotopy colimits as formal colimits}\label{sec:hocolim}

Let $W:\mA\nrightarrow\mC$ be a horizontal arrow, which by definition is a left Quillen functor $\operatorname{Psh}_{\mV}(\mA) \to \operatorname{Psh}_{\mV}(\mC)$, and let $d:\mC\to\mE$ and $\ell:\mA\to\mE$ be strict $\mV$-functors, and $$\lambda:W\to d^!\ell_!$$ a $W$-weighted cocone on $d_!$ with vertex $\ell_!$. Pick $\widetilde{W} \colon \mA \to \Psh_{\mV}(\mC)$ a $\mV$-functor cofibrant in $\mV\Fun(\mA,\Psh_{\mV}(\mC))_{\mathrm{proj}}$ with $W = \Lan_{\y}\widetilde{W}$. After restriction along $\y_{\mA}$, the cocone determines a morphism $\widetilde{W}\to\mE(d-,\ell-)$ in $\Ho\bigl( \mV\Fun (\mA,\Psh_{\mV}(\mC))_{\mathrm{proj}} \bigr)$, where $\mE(d-,\ell-)(a)=\mE(d-,\ell a)$. Since $\widetilde W$ is projectively cofibrant and $\mE(d-,\ell-)$ is always projectively fibrant, we may choose a representative enriched natural transformation $\widetilde W\to\mE(d-,\ell-)$, which induces, for $a\in\mA$ and $e\in\mE$, a morphism
\begin{equation}\label{eq:weighted-colimit-comparison-M} \rho_{a,e}: \mE(\ell a,e) \to \Psh_{\mV}(\mC) \bigl(\widetilde W(a),\mE(d-,e)\bigr) \end{equation}

\begin{proposition}\label{cor:weighted-colimit-strict-M}
The cocone $\lambda$ exhibits $\ell_!$ as the $W$-weighted colimit of $d_!$ in the equipment (according to \ref{def:weightedcolimdef}) if and only if the morphism $\rho_{a,e}$ of \eqref{eq:weighted-colimit-comparison-M} is a weak equivalence in $\mV$ for every $a\in\mA$ and $e\in\mE$.
\end{proposition}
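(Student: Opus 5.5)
We will use criterion (ii)/(i) of \Cref{prop:weightedcolimdef} and compute both sides explicitly at the level of enriched presheaves, reducing invertibility in $\Pro_{\mV}(\mE,\mA)$ to a representable-wise statement via \Cref{lem:homotopical-Yoneda-comparison-M} and \Cref{lem:detection-on-representables-M}.

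\emph{Step 1: the source.} Since $\ell$ is strict, $\ell^!=\overline{\res_\ell\y_{\mE}}$, and restricting along $\y_{\mE}$ gives the diagram $e\mapsto \mE(\ell-,e)\in\Psh_{\mV}(\mA)$. Each value is objectwise fibrant because $\mE$ is locally fibrant, so this is a projectively fibrant diagram $\mE\to\Psh_{\mV}(\mA)$ representing $\ell^!$.

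\emph{Step 2: the target.} From the proof of the closedness statement in \ref{prop:closed-equipment-M}, $[W,d^!]$ is represented by $\overline{G^\ell(W,R\,d^!)}$, where $G^\ell$ is the right adjoint of $-\otimes_{\mA}\widetilde W$. Using $d^!\y_{\mE}=\mE(d-,-)$, which is already projectively fibrant, we may take $R$ to be the identity. Then $G^\ell$ evaluated at $e$ and $a$ is $\Psh_{\mV}(\mC)\bigl(\widetilde W(a),\mE(d-,e)\bigr)$. Here we use that $\widetilde W$ is projectively cofibrant, hence objectwise cofibrant by \Cref{cor:projectively-cofibrant-objectwise-cofibrant-M}, so this hom is a derived hom.

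\emph{Step 3: identify $\lambda^{\#}$.} We must show that, after restriction along $\y_{\mE}$, the morphism $\lambda^{\#}$ is represented by the strict transformation $\rho=(\rho_{a,e})$. This amounts to unwinding the construction: $\lambda^{\#}$ is obtained by whiskering $\lambda$ with $\ell^!$, composing with the counit $\ell_!\ell^!\to\id$, and transposing under closedness. For strict $\ell$, the counit restricted to representables is the composition map $\mE(\ell-,e)\otimes\ell$-representables $\to\y_{\mE}(e)$, by the explicit $g$ of \eqref{eq:reconstruction-comparison-M}. Transposing via the enriched adjunction $(\ast)$ of \Cref{lem:diagram-derived-adjunction-M} then yields precomposition with the chosen representative $\widetilde W\to\mE(d-,\ell-)$, followed by composition in $\mE$, which is exactly $\rho_{a,e}$. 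This bookkeeping is the main obstacle: one must check that the choices of representatives are compatible, and that the isomorphisms $L^\vee\cong[L,\id]$ and $[W,D^\vee V]\cong[W,D^\vee]V$ introduce no twist. I would handle this by working throughout in $\Ho\bigl(\mV\Fun(\mE,\Psh_{\mV}(\mA))_{\mathrm{proj}}\bigr)$, where all objects involved are fibrant and the source is cofibrant after replacement. Morphisms there are then honest homotopy classes, and naturality of the transposition pins down the class.

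\emph{Step 4: conclude.} By \Cref{lem:homotopical-Yoneda-comparison-M}, a morphism in $\Pro_{\mV}(\mE,\mA)$ is invertible iff its restriction along $\y_{\mE}$ is an isomorphism in $\Ho\bigl(\mV\Fun(\mE,\Psh_{\mV}(\mA))_{\mathrm{proj}}\bigr)$. That restriction is a map between diagrams, so it is invertible iff it is an objectwise weak equivalence, that is, iff $\rho_{a,e}$ is a weak equivalence in $\mV$ for all $a\in\mA$ and $e\in\mE$. By (i) of \Cref{prop:weightedcolimdef}, this is exactly the statement.
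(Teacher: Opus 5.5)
Your proposal is correct and follows the paper's own route. You identify $\ell^!$ and $[W,d^!]$ on representables as $e\mapsto\mE(\ell-,e)$ and $e\mapsto\Psh_{\mV}(\mC)\bigl(\widetilde W(-),\mE(d-,e)\bigr)$, recognize the restriction of $\lambda^{\#}$ along $\y_{\mE}$ as $\rho$, and conclude by \Cref{lem:homotopical-Yoneda-comparison-M}. Your Step 3 also spells out the identification that the paper only asserts: the counit $\ell_!\ell^!\to\id_{\mE}$ is represented by the strict composition map $e\circ\ell_!(g)$ built from \eqref{eq:reconstruction-comparison-M}, as shown in the proof of \Cref{prop:equipment-implies-reconstruction-M}, and the closedness transposition is exactly the postcomposition adjunction $(\ast)$ of \Cref{lem:diagram-derived-adjunction-M} for the left Quillen functor $W$.
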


\begin{proof}
The conjoint $\ell^!$ is induced by left Kan extension of $\mE \ni e\mapsto\mE(\ell -,e) \in \operatorname{Psh}_{\mV}(\mA)$, while $[W,d^!]_{}$ by construction is obtained extending the functor which sends $e$ to the presheaf on $\mA$ whose value at $a$ is the object $\underline{\operatorname{Psh}_{\mV}(\mC)}(\widetilde W_a,\mE(d-,e)) \in \mV$. Hence, the restriction on $\y_{\mE}$ of ${\lambda}^{\#}: \ell^!\to[W,d^!]_{}$ identifies in the component $(a,e)$ with $\rho_{a,e}$. We conclude by \Cref{lem:homotopical-Yoneda-comparison-M}.
\end{proof}
Homotopy colimits can be reduced to strict weighted colimits, provided the latter exist, as follows.  
\begin{corollary}\label{cor:cofibrant-weight-computes-colimit-M}
Suppose that the $\widetilde W$-weighted colimit (in the sense of ordinary enriched category theory) of $d$ exists, and denote it by $\ell = \widetilde W \otimes_{\mC}d \colon \mA \to \mE.$ Then $$\ell_!=\colim^W d_!$$ in the sense of \ref{def:weightedcolimdef}.
\end{corollary}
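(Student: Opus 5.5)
The plan is to reduce the corollary to the criterion of \Cref{cor:weighted-colimit-strict-M}. First I will build the cocone $\lambda \colon W \to d^!\ell_!$ from the strict colimit structure. Then I will check that the associated comparison maps $\rho_{a,e}$ of \eqref{eq:weighted-colimit-comparison-M} are weak equivalences; in fact they will be isomorphisms.

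For the cocone, recall that $\widetilde W \colon \mA \to \Psh_{\mV}(\mC)$ is projectively cofibrant and $W = \Lan_{\y}\widetilde W$. The strict weighted colimit $\ell(a) = \widetilde W(a)\otimes_{\mC} d$ comes with its universal strict cocone, which is a $\mV$-natural transformation
$$\widetilde W \to \mE(d-,\ell-) = \res_d\y_{\mE}\ell.$$
By \Cref{lem:homotopical-Yoneda-comparison-M}, a $2$-cell $W \to d^!\ell_!$ in $\Pro_{\mV}(\mA,\mC)$ is the same as a morphism in $\Ho\bigl(\mV\Fun(\mA,\Psh_{\mV}(\mC))_{\mathrm{proj}}\bigr)$ from $\widetilde W$ to $d^!\ell_!\y_{\mA}$. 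We have $\ell_!\y_{\mA} = \y_{\mE}\ell$. The comparison $g$ of \eqref{eq:reconstruction-comparison-M} identifies $d^!P$ with $\res_d P$ for cofibrant-fibrant $P$. Since $\mE$ is locally fibrant-cofibrant, each $\y_{\mE}(e)$ is cofibrant and fibrant, so $d^!\y_{\mE}\ell \simeq \res_d\y_{\mE}\ell$ by \Cref{prop:equipment-implies-reconstruction-M}. So the strict universal cocone defines a class $\lambda$. Moreover, the representative transformation $\widetilde W \to \mE(d-,\ell-)$ used to define $\rho_{a,e}$ can be taken to be exactly the strict universal cocone itself. This choice is legitimate because $\widetilde W$ is cofibrant and the target is fibrant, so homotopy classes are represented by honest maps.

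With this representative, $\rho_{a,e}$ is the composite
$$\mE(\ell a,e) \to \Psh_{\mV}(\mC)\bigl(\widetilde W(a),\mE(d-,e)\bigr)$$
obtained by precomposing with the universal cocone component at $a$. By the very definition of the strict weighted colimit $\widetilde W(a)\otimes_{\mC} d$ in ordinary enriched category theory, this map is an isomorphism in $\mV$, naturally in $e$. In particular it is a weak equivalence for every $a$ and $e$, and \Cref{cor:weighted-colimit-strict-M} gives $\ell_! = \colim^W d_!$.

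The main obstacle is the bookkeeping in the first step. One must check that the $2$-cell $\lambda$ produced from the strict cocone restricts along $\y_{\mA}$ to precisely the class of the strict cocone. This requires the identifications $d^!\ell_!\y_{\mA} \simeq \mE(d-,\ell-)$ (via the map $g$ and the fibrancy of representables) and $W\y_{\mA} \simeq \widetilde W$ to be compatible. I would handle this by tracing the construction of $\overline{(-)}$ in \Cref{lem:homotopical-Yoneda-comparison-M}, where $W\y_{\mA}\cong\widetilde W$ on the nose. A secondary point is functoriality in $a$: $\ell$ is a $\mV$-functor $\mA\to\mE$ because the strict weighted colimit is taken pointwise in the parameter $a$. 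Once these identifications are fixed, the rest is the tautological universal property.
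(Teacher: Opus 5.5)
Your proposal is correct and takes the same route as the paper. The paper's proof is the single observation that the comparison map $\rho_{a,e}$ of \eqref{eq:weighted-colimit-comparison-M} is an isomorphism for the strict universal cocone, so \Cref{cor:weighted-colimit-strict-M} applies. Your extra bookkeeping makes explicit what the paper leaves implicit: you produce $\lambda$ from the strict cocone via $\ell_!\y_{\mA}\cong\y_{\mE}\ell$ and the weak equivalence $g$ of \Cref{prop:equipment-implies-reconstruction-M} on the cofibrant-fibrant representables $\y_{\mE}(e)$, and you take the strict cocone itself as the representative.
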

\begin{proof}
    In this case, the map $\rho_{a,e}$ is an isomorphism.
\end{proof}
Colimits of general vertical arrows (weak $\mV$-functors) can be reduced to colimits of strict $\mV$-functors by Fubini's theorem.
\begin{corollary}\label{cor:general-weighted-colimit-computation-M}
Let $D:\mC\to\mE$ be a vertical arrow, represented by some $\widetilde D:\mC\to\widetilde{\y(\mE)}$. Let $W$ a horizontal arrow and assume $\widetilde{W} \colon \mA \to \operatorname{Psh}_{\mV}(\mC)$ projectively cofibrant in $\mV\Fun\bigl(\mA,\operatorname{Psh}_{\mV}(\mC)\bigr)$. Suppose that the strict $(\widetilde W\otimes_{\mC}\widetilde D)$-weighted colimit of $\id_{\mE}$ exists, and denote it $$\ell = \bigl( \widetilde W\otimes_{\mC}\widetilde D \bigr) \otimes_{\mE}\id_{\mE}: \mA\to\mE.$$ Then $$\ell_!=\colim^W D$$
\end{corollary}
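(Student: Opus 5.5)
The plan is to reduce the weak colimit to a strict weighted colimit of $\id_{\mE}$, using \Cref{cor:formal-weighted-colimit-M} and then \Cref{cor:cofibrant-weight-computes-colimit-M}.

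First, apply \Cref{cor:formal-weighted-colimit-M}: $\colim^W D\cong\colim^{DW}\id_{\mE}$ whenever either side exists. It therefore suffices to show that $\ell_!$ is the $DW$-weighted colimit of $\id_{\mE}$. Here $\id_{\mE}$ is a strict $\mV$-functor, so $(\id_{\mE})_!=\Lan_{\y_{\mE}}\y_{\mE}\cong\id$, in line with \Cref{lem:representables-closed-M}. This puts us in the setting of \Cref{cor:cofibrant-weight-computes-colimit-M}, with $\mC$ replaced by $\mE$ and $d=\id_{\mE}$.

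Second, identify a projectively cofibrant $\mV$-functor $\mA\to\Psh_{\mV}(\mE)$ representing the horizontal composite $DW$. Take $D=\Lan_{\y_{\mC}}\widetilde D$, which is legitimate since $\widetilde D$ is cofibrant-valued. Take $W=\Lan_{\y_{\mA}}\widetilde W$, using the projective cofibrancy of $\widetilde W$. Then
\[
DW\y_{\mA}\cong D\widetilde W=\Lan_{\y_{\mC}}\widetilde D(\widetilde W-)\cong \widetilde W\otimes_{\mC}\widetilde D .
\]
It remains to check that $\widetilde W\otimes_{\mC}\widetilde D$ is projectively cofibrant in $\mV\Fun(\mA,\Psh_{\mV}(\mE))_{\mathrm{proj}}$. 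I would argue this through \Cref{lem:diagram-derived-adjunction-M}: postcomposition with the left Quillen functor $D$ is left Quillen on the projective diagram categories, so it sends the cofibrant $\widetilde W$ to a cofibrant diagram. Consequently $DW\cong\Lan_{\y_{\mA}}(\widetilde W\otimes_{\mC}\widetilde D)$ in $\Pro_{\mV}(\mA,\mE)$, and $\widetilde W\otimes_{\mC}\widetilde D$ is a legitimate choice of cofibrant presentation of the weight, as required in \Cref{sec:hocolim}.

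Third, apply \Cref{cor:cofibrant-weight-computes-colimit-M} with weight $DW$, cofibrant presentation $\widetilde W\otimes_{\mC}\widetilde D$, and $d=\id_{\mE}$. The hypothesis that the strict colimit $\ell=(\widetilde W\otimes_{\mC}\widetilde D)\otimes_{\mE}\id_{\mE}$ exists gives $\ell_!=\colim^{DW}(\id_{\mE})_!$. Combined with the first step, this yields $\ell_!=\colim^W D$.

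I expect the main obstacle to be the cocone bookkeeping. \Cref{cor:cofibrant-weight-computes-colimit-M} produces a specific cocone $DW\to\id^!\ell_!$. One must check that it corresponds, under the mate bijection in the proof of \Cref{prop:associativity-weighted-colimits-M}, to a cocone $W\to D^\vee\ell_!$ exhibiting the colimit. Since Fubini is stated as an "if and only if" on cocones, the existence of \emph{some} exhibiting cocone suffices, so this should be a matter of transporting along the isomorphism $\colim^W D\cong\colim^{DW}\id_{\mE}$ rather than a computation.
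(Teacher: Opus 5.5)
Your proposal is correct and is essentially the paper's own proof. You use Fubini (\Cref{cor:formal-weighted-colimit-M}) to reduce to $\colim^{DW}\id_{\mE}$, identify $DW$ as the Kan extension of $\widetilde W\otimes_{\mC}\widetilde D$ (projectively cofibrant by \Cref{lem:diagram-derived-adjunction-M}), and conclude with \Cref{cor:cofibrant-weight-computes-colimit-M}; your observation that only the existence of an exhibiting cocone matters is also how the paper implicitly treats the cocone bookkeeping.
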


\begin{proof}
The composite $DW$ is computed Kan extending the functor $\widetilde W\otimes_{\mC}\widetilde D: \mA\to\Psh_{\mV}(\mE)$. Since $\widetilde W$ is projectively cofibrant, $\widetilde W\otimes_{\mC}\widetilde D$ is too by \ref{lem:diagram-derived-adjunction-M}. Fubini's theorem \ref{prop:associativity-weighted-colimits-M} then ensures that $\colim^{DW} \id_{\mE} \cong \colim^WD$, and we conclude by \Cref{cor:cofibrant-weight-computes-colimit-M}.
\end{proof}
\subsubsection{A formula for homotopy colimits}\label{sec:computecolim}
Let us explicitate the formula in \ref{cor:cofibrant-weight-computes-colimit-M} using the cofibrant replacement in \Cref{ap:A}. Consider the composition $W\y_{\mA}:\mA\to\Psh_{\mV}(\mC)$. Since $W$ is left Quillen, each $W\y_{\mA}(a)$ is projectively cofibrant. Choose a projective cofibrant replacement $QW\y_{\mA}\xrightarrow{\sim}W\y_{\mA}$ in $\mV\Fun (\mA,\Psh_{\mV}(\mC))_{\mathrm{proj}}$. By \Cref{lem:homotopical-Yoneda-comparison-M}, $W \cong \overline {W\y_{\mA}}=\Lan_{\y_{\mA}}QW\y_{\mA}$. In other words, we may take $\widetilde W=QW\y_{\mA}$ in \ref{cor:cofibrant-weight-computes-colimit-M}, \ref{cor:cofibrant-weight-computes-colimit-M} and \ref{cor:general-weighted-colimit-computation-M}. The hypotheses in \Cref{prop:explicit-projective-replacement-M} are met, so that, whenever the indicated strict weighted colimit exists, and assuming the base of enrichment $\mV$ is a simplicial model category (so that $\Delta^{\bullet}$ is a cosimplicial resolution of the monoidal unit), $$(\colim^Wd)(a)=\int^{c\in\mC}\int^{[n]\in\Delta_{\mathrm{inj}}} \Delta^n\otimes B_nW\y_{\mA}(a)(c) \otimes d(c)$$ while for a general $\mV$ $$(\colim^Wd)(a) = \int^{c\in\mC} |\!|\!| B_\bullet W\y_{\mA}(a)(c)|\!|\!|\otimes d(c)$$ where $|\!|\!|-|\!|\!|$ is the fat realization defined in \ref{not:realization}, and where $$B_nW\y_{\mA}(a)(c) =$$$$\coprod_{\substack{a_0,\ldots,a_n\\c_0,\ldots,c_n}} \mA(a_n,a)\otimes\mC(c,c_n) \otimes\mA(a_{n-1},a_n)\otimes\mC(c_n,c_{n-1})\otimes \cdots\otimes \mA(a_0,a_1)\otimes\mC(c_1,c_0) \otimes W\y_{\mA}(a_0)(c_0) \label{eq:weighted-colimit-explicit-weight-M}$$
\subsection{An enriched Quillen's Theorem A}
\label{subsec:enriched-Quillen-A-M}
We will, at first, consider the case where the monoidal unit $\tu_{\mV}$ is terminal in $\mV$. We denote $\mathbf 1$ the terminal $\mV$-category and $t_{\mC} \colon \mC \to \mathbf1$ the unique $\mV$-functor with its associated vertical arrow $(t_{\mC})_!$, whose conjoint we denote $$t_{\mC}^! := \bigl((t_{\mC})_!\bigr)^\vee \colon \mathbf 1 \nrightarrow \mC$$ The right adjoint of the left Quillen functor representing $(t_{\mC})_!$ sends $E\in\mV$ to the constant presheaf on $\mC$ with value $E$. It follows that $t_{\mC}^!$ is induced by a projectively cofibrant replacement $Q_{\mC}(\cst_{\tu_{\mV}}) \to \cst_{\tu_{\mV}}$ of the constant-unit presheaf on $\mC$.

For a vertical arrow $F:\mA\to\mC$, there is a canonical invertible $2$-cell $$ \alpha_F:(t_{\mC})_!F\to (t_{\mA})_! $$ Taking the mate we obtain $$\beta_F \colon Ft_{\mA}^! \to t_{\mC}^!$$

\subsubsection{The $\mV$-realization} \label{sec:realization}Given a horizontal arrow $H \colon \mC \nrightarrow \mathbf{1}$, we define its $\mV$\emph{-realization} $\lVert H \rVert_{\mV}$ to be the homotopy type obtained as the composition $Ht_{\mC}^!$. Therefore we have $$\lVert H \rVert_{\mV} \in \Pro_{\mV}\bigl(\mathbf{1},\mathbf{1}\big) \simeq \operatorname{Ho}\mV.$$ 
For a general $H \colon \mA \nrightarrow \mB$ it is also convenient to write $\lVert H \rVert_{\mV} = H t_{\mA}^! \in \Ho \Psh_{\mV}\bigl(\mB\bigr)$.

\begin{definition}\label{def:classifying-functor-M}
We define the \emph{classifying type of a $\mV$-category} as the functor $$\mathbb B_{\mV} \colon \mV\Cat^h \to \Ho\mV$$ whose value at $\mC$ is computed by $\mV$-realizing the terminal arrow $(t_{\mC})_{!} \colon \mC \to \mathbf{1}$, $$\mathbb B_{\mV}\mC \coloneqq \lVert (t_{\mC})_{!} \rVert_{\mV}$$ and, for a vertical arrow $F:\mA\to\mC$, $$\mathbb B_{\mV}F = \bigl((t_{\mC})_!\beta_F\bigr) \circ \bigl(\alpha_Ft_{\mA}^!\bigr)^{-1}: \mathbb B_{\mV}\mA \to \mathbb B_{\mV}\mC.$$
\end{definition}

It can be computed as $$\mathbb B_{\mV}\mC \cong Q_{\mC}(\cst_{\tu_{\mV}}) \otimes_{\mC} \cst_{\tu_{\mV}}$$ in $\Ho\mV$, where $Q_{\mC}$ denotes a projectively cofibrant replacement in $\operatorname{Psh}_{\mV}\bigl(\mC\bigr)$.
\begin{definition}\label{def:commamodule}
    Let $c:\mathbf{1} \to \mC$ an object of $\mC$, and $c^! \colon \mC \to \mathbf{1}$ its conjoint. For a vertical arrow $F \colon \mA \to \mC$, we call $c^!F \colon \mA \nrightarrow\mathbf{1}$ its \emph{comma module}, and we denote it $$\mC(c,F-) \coloneqq c^!F $$
\end{definition}
Let $\widetilde{F} \colon \mA \to \widetilde{\y(\mC)} \subset \operatorname{Psh}_{\mV}(\mC)$ be a $\mV$-functor inducing the vertical arrow $F$ as $F=\operatorname{Lan}_{\y_{\mA}}\widetilde{F}$, we have $$\lVert\mC(c,F-)\rVert_{\mV} \cong Q_{\mA}(\cst_{\tu_{\mV}}) \otimes_{\mA} \widetilde F(-)(c)$$ Following \Cref{ap:A}, we may give formulas by computing the cofibrant replacement $Q$. Consider the simplicial objects in $\mV$ $$n \mapsto B_n\mA = \coprod_{a_0,\ldots,a_n} \mA(a_{n-1},a_n)\otimes\cdots\otimes \mA(a_0,a_1)$$ and similarly $$B_n(\widetilde F(-)(c)) = \coprod_{a_0,\ldots,a_n} \mA(a_{n-1},a_n)\otimes\cdots\otimes \mA(a_0,a_1)\otimes \widetilde F(a_0)(c),$$ where, if $F=f_!$ is induced by an ordinary $\mV$-functor $f:\mA\to\mC$, we may take $\widetilde F(a)(c)=\mC(c,fa)$.
\begin{proposition}\label{prop:classifying-object-resolution-M}
Let $| \! | \! | - |\!|\!|$ be the fat realization defined in \ref{not:realization}. There are natural isomorphisms in $\Ho\mV$: $$\mathbb B_{\mV}\mA \cong | \! | \! |B_\bullet\mA| \! | \! |, \qquad \lVert\mC(c,F-)\rVert_{\mV} \cong | \! | \! |B_\bullet(\widetilde F(-)(c))| \! | \! |.$$
\end{proposition}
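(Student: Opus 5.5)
Both isomorphisms follow from the two formulas stated just before the proposition,
$$\mathbb B_{\mV}\mA \cong Q_{\mA}(\cst_{\tu_{\mV}})\otimes_{\mA}\cst_{\tu_{\mV}}, \qquad \lVert\mC(c,F-)\rVert_{\mV}\cong Q_{\mA}(\cst_{\tu_{\mV}})\otimes_{\mA}\widetilde F(-)(c),$$
once we fix an explicit projectively cofibrant replacement of the constant-unit presheaf. I would take the two-sided bar resolution of \Cref{ap:A}, as in \Cref{sec:computecolim}. Define the simplicial object
$$B_n(\cst_{\tu_{\mV}})(a)=\coprod_{a_0,\ldots,a_n}\mA(a,a_n)\otimes\mA(a_{n-1},a_n)\otimes\cdots\otimes\mA(a_0,a_1)$$
in $\Psh_{\mV}(\mA)$, using $\cst_{\tu_{\mV}}(a_0)=\tu$. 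Set $Q_{\mA}(\cst_{\tu_{\mV}})=|\!|\!|B_\bullet(\cst_{\tu_{\mV}})|\!|\!|$, the fat realization of \ref{not:realization}.

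\textbf{Key steps.} I would carry them out in this order.

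First, I would invoke \Cref{prop:explicit-projective-replacement-M}. It should give that this fat realization is projectively cofibrant and maps by a weak equivalence to $\cst_{\tu_{\mV}}$. Its hypotheses are met because $\mA$ is locally cofibrant and $\tu$ is cofibrant. Each $B_n$ is then a coproduct of representables tensored with cofibrant objects, and the latching maps of the semisimplicial object are coproduct inclusions, so it is Reedy cofibrant on $\Delta_{\mathrm{inj}}$.

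Second, I would commute $-\otimes_{\mA}Y$ past the fat realization, for $Y=\cst_{\tu_{\mV}}$ or $Y=\widetilde F(-)(c)$. This is valid because $-\otimes_{\mA}Y$ is a left adjoint, the fat realization is a weighted colimit (a coend over $\Delta_{\mathrm{inj}}$), and both kinds of colimit commute. The enriched Yoneda (co-Yoneda) lemma then gives
$$\mA(-,a_n)\otimes_{\mA}Y\cong Y(a_n).$$
Levelwise this yields $B_n\mA$ when $Y=\cst_{\tu_{\mV}}$, and $B_n(\widetilde F(-)(c))$ when $Y=\widetilde F(-)(c)$, after reindexing the chain. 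So the derived tensor products are computed by $|\!|\!|B_\bullet\mA|\!|\!|$ and $|\!|\!|B_\bullet(\widetilde F(-)(c))|\!|\!|$.

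Third, I would check that the point-set tensor with this particular $Q$ really computes the homotopy type. For $\mathbb B_{\mV}$ this is the formula already quoted, and independence of the choice of $Q$ follows from \Cref{lem:cofibrant-weight-homotopy-invariance}. Here $Y$ is objectwise cofibrant: $\tu$ is cofibrant, and $\widetilde F(a)$ is cofibrant in $\Psh_{\mV}(\mC)$, hence objectwise cofibrant by \Cref{cor:projectively-cofibrant-objectwise-cofibrant-M}. Naturality in $\mA$ and in $F$ comes from functoriality of the bar construction; I would identify it with $\mathbb B_{\mV}F$ by checking on representatives.

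\textbf{Main obstacle.} I expect the hardest step to be the first: the homotopical properties of the fat realization without assuming $\mV$ simplicial. One must show the augmentation $|\!|\!|B_\bullet(\cst_{\tu_{\mV}})|\!|\!|\to\cst_{\tu_{\mV}}$ is a weak equivalence. The first thing I would try is the extra degeneracy, which exists objectwise after evaluating at $a$: insert $\id_a$ as a new first arrow of the chain. This contracts the augmented semisimplicial object, and combined with the homotopy invariance of fat realization of Reedy cofibrant objects it gives the equivalence. The rest is formal once the appendix supplies this.
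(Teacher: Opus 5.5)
Your proposal is correct and follows the paper's proof. That proof is the one-line observation that $(-)\otimes_{\mA}\cst_{\tu_{\mV}}$ (and equally $(-)\otimes_{\mA}\widetilde F(-)(c)$) preserves colimits, which reduces everything to the bar-type replacement of \Cref{prop:explicit-projective-replacement-M} followed by co-Yoneda levelwise, exactly as in your second step. Two small corrections: for a presheaf the chain should read $\mA(a,a_n)\otimes\mA(a_n,a_{n-1})\otimes\cdots\otimes\mA(a_1,a_0)$ (as you wrote it, the factors are not composable), and independence of the choice of $Q_{\mA}$ comes most directly from $(-)\otimes_{\mA}Y$ being left Quillen for objectwise cofibrant $Y$ (\Cref{lem:leftquiliffobjcof}) plus Ken Brown's lemma, rather than from \Cref{lem:cofibrant-weight-homotopy-invariance}, which controls the other variable.
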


\begin{proof}
Given that $(-)\otimes_{\mA}\cst_{\tu_{\mV}}$ preserves colimits, we reduce to \Cref{prop:explicit-projective-replacement-M}.
\end{proof}

\subsubsection{Enriched Theorem A}
\begin{theorem}
\label{th:enriched-Quillen-A-M}
Let $$F:\mA\to\mC$$ be a vertical arrow, and assume $\lVert\mC(c,F-)\rVert_{\mV}$ is contractible in $\mV$ for every $c \in \mC$. Then for every vertical arrow $$D:\mC\to\mE$$ if either $\colim(DF)$ or $\colim D$ exists the other does too, and the comparison map $$\colim(DF)\to \colim D$$ is invertible.

\end{theorem}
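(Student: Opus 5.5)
The plan is to reduce the statement to a comparison of weights, using Fubini's theorem. Here $\colim D$ means the colimit weighted by the terminal weight, i.e. $\colim^{t_{\mC}^!}D$ with $t_{\mC}^!\colon\mathbf 1\nrightarrow\mC$. Likewise $\colim(DF)=\colim^{t_{\mA}^!}(DF)$. By \Cref{prop:associativity-weighted-colimits-M} we have
$$\colim^{t_{\mA}^!}(DF)\cong\colim^{Ft_{\mA}^!}D$$
whenever either side exists. It therefore suffices to show that the mate $\beta_F\colon Ft_{\mA}^!\to t_{\mC}^!$ is invertible in $\Pro_{\mV}(\mathbf 1,\mC)$. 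Once this is known, any two isomorphic weights have the same colimits: by criterion (i) of \Cref{prop:weightedcolimdef}, precomposing a cocone with $\beta_F$ transports colimit cocones, since $[W,D^\vee]$ is functorial in $W$ and sends isomorphisms to isomorphisms. The comparison map $\colim(DF)\to\colim D$ is by construction the one induced by $\beta_F$, so it is invertible.

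To prove that $\beta_F$ is invertible, I would test it on objects using \Cref{lem:homotopical-Yoneda-comparison-M}. Both $Ft_{\mA}^!$ and $t_{\mC}^!$ are horizontal arrows $\mathbf 1\nrightarrow\mC$. Such an arrow corresponds to an object of $\Ho\Psh_{\mV}(\mC)$, and a map between two of them is an isomorphism if and only if it is an objectwise weak equivalence. So I would evaluate at each $c\in\mC$, which amounts to composing with the conjoint $c^!$ of $c\colon\mathbf 1\to\mC$. This is legitimate because $c^!$ is induced by restricting along $c$, and restriction is evaluation.

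Evaluating at $c$ gives the following. First, $c^!t_{\mC}^!=\lVert c^!\rVert_{\mV}$; it is computed as $Q_{\mC}(\cst_{\tu})(c)$, which is weakly equivalent to $\tu$, hence contractible. Second, $c^!Ft_{\mA}^!=\lVert\mC(c,F-)\rVert_{\mV}$ by \Cref{def:commamodule} and the definition of $\mV$-realization; this is contractible by hypothesis. Since $\tu$ is terminal, every morphism in $\Ho\mV$ between contractible objects is an isomorphism: both objects are isomorphic to $\tu$ in $\Ho\mV$, and $\tu$ is terminal there as well. Hence each component $c^!\beta_F$ is invertible, and so is $\beta_F$.

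The main obstacle is the middle step: making precise that invertibility of a $2$-cell between horizontal arrows $\mathbf 1\nrightarrow\mC$ can be detected after composing with all $c^!$. One also has to check that $c^!\beta_F$ really is the map between realizations under the identifications above. I would argue this concretely with representatives, using cofibrant replacements $\Lan_{\y}\widetilde F$ applied to $Q_{\mA}(\cst_{\tu})$, so that composition with $c^!$ becomes evaluation of presheaves at $c$. Then I would appeal to the fact that weak equivalences in the projective structure are objectwise. The identification of the comparison map with the one induced by $\beta_F$ is formal, via mates in the equipment.
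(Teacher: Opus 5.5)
Your proposal is correct and follows the paper's proof. You use Fubini to rewrite $\colim(DF)$ as $\colim^{Ft_{\mA}^!}D$, and then use invertibility of $\beta_F\colon Ft_{\mA}^!\to t_{\mC}^!$ to replace the weight by $t_{\mC}^!$.

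The only difference is that you spell out the step the paper states in one line. You detect invertibility of $\beta_F$ objectwise by whiskering with $c^!$ (which is evaluation at $c$), and you use that $\tu$ is terminal in $\Ho\mV$, so any map between contractible objects is invertible. That argument also makes your concern about identifying $c^!\beta_F$ with a specific map between realizations unnecessary.
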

\begin{proof}
 By Fubini's theorem \ref{prop:associativity-weighted-colimits-M} we have $\colim(DF) =\colim^{t_{\mA}^!}(DF) \cong\colim^{Ft_{\mA}^!}D $. Our assumption on the contractibility of $\lVert\mC(c,F-)\rVert_{\mV}$ for every $c$ says that the map $\lVert\mC(-,F-)\rVert_{\mV}=Ft_{\mA}^! \xrightarrow{\beta_F} t_{\mC}^{!}$ is invertible, therefore $\colim^{Ft_{\mA}^!}D\cong\colim^{t_{\mC}^!}D =\colim D$.
\end{proof}
\begin{corollary} Under the assumptions of \Cref{th:enriched-Quillen-A-M}, the map $$\mathbb B_{\mV}F: \mathbb B_{\mV}\mA \to \mathbb B_{\mV}\mC$$ is an isomorphism in $\Ho\mV$.
\end{corollary}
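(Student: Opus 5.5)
The plan is to read the statement off \Cref{def:classifying-functor-M}, where
$$\mathbb B_{\mV}F = \bigl((t_{\mC})_!\beta_F\bigr) \circ \bigl(\alpha_Ft_{\mA}^!\bigr)^{-1}.$$
Since $\alpha_F$ is an invertible $2$-cell, so is its whiskering $\alpha_Ft_{\mA}^!$. It therefore suffices to show that $(t_{\mC})_!\beta_F \colon (t_{\mC})_!Ft_{\mA}^! \to (t_{\mC})_!t_{\mC}^!$ is invertible in $\Pro_{\mV}(\mathbf 1,\mathbf 1)\simeq\Ho\mV$. Whiskering by the $1$-cell $(t_{\mC})_!$ is a functor on hom-categories of the $2$-category $\Pro_{\mV}$, so it preserves isomorphisms. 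The whole corollary thus reduces to proving that $\beta_F \colon Ft_{\mA}^!\to t_{\mC}^!$ is invertible in $\Pro_{\mV}(\mathbf 1,\mC)$.

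That invertibility is exactly what was used in the proof of \Cref{th:enriched-Quillen-A-M}, and I would justify it carefully as follows. By \Cref{lem:homotopical-Yoneda-comparison-M}, a $2$-cell in $\Pro_{\mV}(\mathbf 1,\mC)$ is invertible iff its restriction along $\y_{\mathbf 1}$ is an isomorphism in $\Ho\Psh_{\mV}(\mC)$. Equivalently, we may detect it representable-wise after composing with each conjoint $c^!\colon\mC\nrightarrow\mathbf 1$ for $c\in\mC$. Indeed, composing with $c^!$ computes evaluation at $c$ derivedly: $c^!$ is induced by $\mC(c,-)$-style evaluation, which is left Quillen by \Cref{lem:evaluation-both-quillen-M}, and weak equivalences in the projective structure are objectwise.

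Under this evaluation, the domain becomes $c^!Ft_{\mA}^! = \lVert\mC(c,F-)\rVert_{\mV}$ by \Cref{def:commamodule} and the definition of $\mV$-realization. The codomain becomes $c^!t_{\mC}^!$, which is the value at $c$ of $Q_{\mC}(\cst_{\tu_{\mV}})$. That value is weakly equivalent to $\tu_{\mV}$, which is terminal by assumption, so the codomain is contractible. The hypothesis that $\lVert\mC(c,F-)\rVert_{\mV}$ is contractible then says that both source and target of $c^!\beta_F$ are isomorphic to the terminal object of $\Ho\mV$. Hence any map between them is invertible, because $\tu_{\mV}$ is terminal in $\mV$ and the homotopy category inherits that terminal object. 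So $\beta_F$ is objectwise invertible, hence invertible.

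The main obstacle is making precise that "composing with $c^!$" agrees with evaluating the restriction along $\y_{\mathbf 1}$ at $c$ up to the identifications used. Concretely, one must check that $c^! \cong \overline{\mathrm{ev}_c}$, that composition in $\Pro_{\mV}$ is computed on cofibrant replacements as in \Cref{lem:representables-closed-M}, and that the two contractible objects are indeed connected by $c^!\beta_F$ rather than by some other map. Since any map between terminal objects is an isomorphism, the last point is harmless. The first two follow from \Cref{lem:diagram-derived-adjunction-M} together with \Cref{cor:projectively-cofibrant-objectwise-cofibrant-M}.
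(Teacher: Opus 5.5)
Your proposal is correct and follows the paper's proof, which simply observes that $\mathbb B_{\mV}F = \bigl((t_{\mC})_!\beta_F\bigr)\circ\bigl(\alpha_Ft_{\mA}^!\bigr)^{-1}$ is invertible because both $\alpha_F$ and $\beta_F$ are. Your objectwise check of $\beta_F$ makes explicit a step the paper only asserts in the proof of the enriched Theorem A: you compose with $c^!$, which is evaluation at $c$, and note that both ends are then terminal in $\Ho\mV$.
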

\begin{proof}
By \Cref{def:classifying-functor-M} $\mathbb B_{\mV}F = \bigl((t_{\mC})_!\beta_F\bigr) \circ \bigl(\alpha_Ft_{\mA}^!\bigr)^{-1}$ is invertible since both $\alpha_F$ and $\beta_F$ are.
\end{proof}

\begin{definition}
    A vertical arrow $F \colon \mA \to \mC$ is \emph{final} if the condition of \ref{th:enriched-Quillen-A-M} holds.
\end{definition}
Changing the base of enrichment, or varying the model structure on it, the notion of finality changes accordingly. Below are examples.

\begin{remark}[Weighted finality]
    More general, weighted versions of Quillen's theorem A reduce, in this framing, again simply to Fubini's Theorem (\ref{prop:associativity-weighted-colimits-M}). For example, given a vertical arrow $F$ and weight $W$, if the morphism $FF^{\vee}W \to W$ is invertible, then $\colim^{F^{\vee}W}DF \cong \colim^WD$.
\end{remark}

\subsection{Some computations of classifying objects}\label{sec:comutationsclassifying}

\subsubsection{Homotopy finality}\label{homotopyfinality}

Let $\mV=\sSet_{\mathrm{KQ}}$. The hypotheses of \ref{rem:locwellp} are met, so that for a locally Kan simplicial category $\mA$, the classifying space $\mathbb{B}_{\mathrm{KQ}}$ defined in $\ref{def:classifying-functor-M}$ has the homotopy type of the realization (computed by taking the diagonal) of the bisimplicial set produced by an application of the nerve $N$ to $\mA_n$ for each $n$; we write $N_{\bullet}(\mA_{\bullet})$ for it. To give a formula, its $n$-simplexes are $$\bigl(\operatorname{diag}N_{\bullet}(\mA_{\bullet})\bigr)_n=(N(\mA_n))_n \cong \coprod_{a_0,\ldots,a_n} \mA(a_0,a_1)_n \times\cdots\times \mA(a_{n-1},a_n)_n .$$ Invoking \cite[1.24 and 2.7]{Joy07} together with \cite[5.6 and 5.7]{JT07} (cf. \cite[4.3]{Ara25}), we get an isomorphism $\operatorname{diag}N_{\bullet}(\mA_{\bullet}) \cong N^{\mathrm{hc}}(\mA)$ naturally in $\Ho\sSet_{\mathrm{Joyal}}$, where $N^{\mathrm{hc}}(\mA)$ is the homotopy coherent nerve, and therefore $\mathbb B_{\mathrm{KQ}}\mA \cong N^{\mathrm{hc}}(\mA)$ naturally in $\Ho\sSet_{\mathrm{KQ}}$.

It follows that $\mathbb{B}_{\mathrm{KQ}}$ computes "the $\infty$-groupoidification" of the homotopy-coherent nerve, more precisely for any locally Kan category $\mA$ and any Kan complex $K$, there is a natural weak homotopy equivalence of mapping spaces $$\Map_{\mathrm{KQ}}\bigl(\mathbb{B}_{\mathrm{KQ}}\mA,K\bigr)\simeq \Map_{\mathrm{Joyal}}\bigl(N^{\mathrm{hc}}\mA,K\bigr) $$ (where $\Map_{\mM}$ denotes the \emph{Dwyer-Kan mapping space} (also called \emph{homotopy function complex}) of the model structure $\mM$).

For a vertical arrow induced by a strict simplicial functor $f:\mA\to\mC$, the realization $ \lVert\mC(c,f-)\rVert_{\mathrm{KQ}} $ is, up to isomorphism in $\Ho\sSet_{\mathrm{KQ}}$, the simplicial set that in degree $n$ is $$\coprod_{a_0,\ldots,a_n} \mC(c,f(a_0))_n \times \mA(a_0,a_1)_n \times\cdots\times \mA(a_{n-1},a_n)_n$$ For a more general vertical arrow $F$ represented by $\widetilde F$, one replaces $\mC(c,f(a_0))_n$ with $\widetilde F(a_0)(c)_n $ in the expression above.

If $\mA$ and $\mC$ are ordinary categories, then $\mathbb B_{\mathrm{KQ}}\mA=N\mA$ and $\lVert\mC(c,f-)\rVert_{\mathrm{KQ}} =N(c\downarrow f)$, so that \Cref{th:enriched-Quillen-A-M} specializes to the classical result of Quillen in its original formulation \cite{Qui73}.

\subsubsection{Joyal's Theorem A} The quasicategorical version of Quillen's theorem \cite[\href{https://kerodon.net/tag/02NY}{Tag 02NY}]{Ker} is also a direct consequence of the simplicial case of \Cref{th:enriched-Quillen-A-M}, using \Cref{thm:comparison-before-strictification}.

\subsubsection{1-categorical finality.} Given ordinary categories $\mA$ and $\mC$, we can instead consider them in the equipment associated to the trivial model structure on the category of sets, to which \ref{rem:locwellp} applies. In formula \ref{eq:realization} in \Cref{ap:A}, we can take $R^n(\cst_{\tu_{\mV}})=\{*\}$ for every $n$, obtaining bijections  $\lVert\mC(c,f-)\rVert_{\Set}  \cong \pi_0 N(c \downarrow f)$ and $\mathbb{B}_{\Set}(\mA) \cong \pi_0(N\mA)$. 

Therefore $f \colon \mA \to \mC$ is final precisely when $c \downarrow f$ is non-empty and connected for every $c$.

\subsubsection{The classifying $1$-type}\label{bifinality}
Let $\mV=\Cat$ endowed with its canonical model structure determined by equivalences of categories and isofibrations. Vertical arrows in our equipment are interpreted as weak $2$-functors, strict arrows are ordinary $2$-functors, formal colimits are bicolimits, and so on. 

Let $\mC$ be a $2$-category. It is convenient to see it as a double category $\mC=(\mC_0,\mC_1)$ with $\mC_1 = \coprod_{(x,y)}\mC(x,y)$ and $\mC_0 = \operatorname{obj}\mC$, so that we can take its \emph{internal nerve}: this is a simplicial object $\Delta^{\mathrm{op}} \to \Cat$ whose category of $n$-simplexes, for a general double category $\mathbb{D}=(\mD_0,\mD_1)$, is defined equal to $\mD_0$ and $\mD_1$ in the 0-th and first degrees, and for $n \geq 2$ by the pullback $N_{int}(\mathbb{D})_n=\mD_1 \times_{\mD_0} \mD_1 \times_{\mD_0} \cdots \times_{\mD_0} \mD_1$ of $n$ copies of $\mD_1$ over $\mD_0$. 
The formulas in \Cref{ap:A}, together with \Cref{def:classifying-functor-M}, say that there is an equivalence of categories $$\mathbb B_{\Cat}\bigl(\mC\bigr) \simeq I^{\bullet}\otimes_{\Delta}N_{int}(\mC)$$ where we have taken as cosimplicial resolution $R^{\bullet}$ (\ref{cosimpres})  the canonical one that in degree $n$ is the groupoid $I^{n}$ with $n+1$ objects each connected by a unique isomorphism.

To proceed further, we recall a slight modification of the double category of squares of an ordinary category $\mD \in \Cat$, we denote it $\operatorname{Sq}_{\widetilde H}(\mD)$. We have $\operatorname{Sq}_{\widetilde H}(\mD)_0=\mD$, while $\operatorname{Sq}_{\widetilde H}(\mD)_1$ is the category of commutative squares in $\mD$ whose top and bottom arrows are invertible, so that, applying the internal nerve to $\operatorname{Sq}_{\widetilde H}(\mD)$, we obtain a simplicial object that is easily recognized as the one having in degree $n$ the category of functors $I^n \to \mD$. 

Now, still seeing $\mC$ as a double category with $\mC_1 = \coprod_{(x,y)}\mC(x,y)$ in the way described at the beginning, we have, for every $\mD \in \Cat$, the natural isomorphisms of hom-sets $$\begin{aligned} \Fun\bigl( I^{\bullet}\otimes_{\Delta}N_{int}(\mC),\mD\bigr)  & \cong \Cat^{\Delta^{\mathrm{op}}}\bigl(N_{int}(\mC),(\mD)^{I^\mathrm{[\bullet]}}\bigr) \\ & \cong \Cat^{\Delta^{\mathrm{op}}}\bigl(N_{int}(\mC),N_{int}\operatorname{Sq}_{\widetilde H}(\mD)\bigr) \\ & \cong \operatorname{DblFun}\bigl(\mC,\operatorname{Sq}_{\widetilde H}(\mD)\bigr) \\ & \cong \Fun \bigl( (\tau_1\mC)^{gp} , \mD\bigr) \end{aligned}$$ where $\tau_1 \colon 2\Cat \to \Cat$ and $(-)^{gp} \colon \Cat \to  \Grpd$ are left adjoints to the respective inclusions,  and the last isomorphism relies on the following fact: in our identification of $\mC$ with a double category, the vertical part, $\operatorname{obj}\mC$, is discrete, so that given a double functor $\mC \to \operatorname{Sq}_{\widetilde H}(\mD)$, a $2$-cell in $\mC$ is sent to a diagram in $\mD$ whose vertical sides are identities and top and bottom side are isomorphisms.

\begin{proposition}
 We have proven that $$\mathbb{B}_{\Cat}(\mC) \simeq (\tau_1\mC)^{\mathrm{gp}}$$ is the \emph{fundamental groupoid} of $\mC$.    
\end{proposition}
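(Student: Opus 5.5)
The statement asserts that $\mathbb{B}_{\Cat}(\mC) \simeq (\tau_1\mC)^{\mathrm{gp}}$. The plan is to assemble the chain of natural bijections displayed just before it and then conclude by the Yoneda lemma. Two further points need checking: that the coend $I^{\bullet}\otimes_{\Delta}N_{int}(\mC)$ really computes $\mathbb{B}_{\Cat}(\mC)$ in $\Ho\Cat$, and that the resulting identification is an equivalence of categories and not merely a bijection on hom-sets.

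\textbf{Step 1: compute $\mathbb{B}_{\Cat}$ by the coend.} By \Cref{def:classifying-functor-M} and the formula $\mathbb B_{\mV}\mC \cong Q_{\mC}(\cst_{\tu_{\mV}}) \otimes_{\mC} \cst_{\tu_{\mV}}$, together with \Cref{prop:classifying-object-resolution-M}, we have $\mathbb{B}_{\Cat}(\mC)\cong |\!|\!|B_\bullet\mC|\!|\!|$. I will observe that $B_\bullet\mC$ is exactly the internal nerve $N_{int}(\mC)$, since $\mC(a_{n-1},a_n)\times\cdots\times\mC(a_0,a_1)$ summed over objects is the iterated pullback $\mC_1\times_{\mC_0}\cdots\times_{\mC_0}\mC_1$. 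Next I will use the cosimplicial resolution $I^\bullet$ of $\tu=[0]$ in the canonical model structure: each $I^n$ is contractible, and the latching maps are injective on objects, hence cofibrations. The fat realization formula of \Cref{ap:A} therefore reduces to the coend $I^\bullet\otimes_{\Delta}N_{int}(\mC)$. Two points must be checked here:
\begin{itemize}
\item passing from the injective (fat) coend to the full $\Delta$-coend is harmless, because every object of $\Cat$ is cofibrant and the degeneracies of $N_{int}(\mC)$ are split inclusions (Reedy cofibrancy);
\item $I^\bullet$ is Reedy cofibrant.
\end{itemize}

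\textbf{Step 2: identify the functor represented by the coend.} The first bijection in the displayed chain is the coend/end adjunction. The second uses $\Fun(I^n,\mD)\cong N_{int}(\operatorname{Sq}_{\widetilde H}(\mD))_n$. I will verify this degreewise: a functor $I^n\to\mD$ is a chain of $n$ composable isomorphisms, while a morphism of such functors is a ladder of commuting squares with invertible horizontal arrows. Both descriptions are compatible with the simplicial structure. The third bijection uses that the internal nerve $\operatorname{DblCat}\to\Cat^{\Delta^{\op}}$ is fully faithful. The fourth is the key combinatorial point. A double functor $\mC\to\operatorname{Sq}_{\widetilde H}(\mD)$ is given by two pieces of data:
\begin{itemize}
\item a function from the discrete set $\mC_0$ to objects of $\mD$;
\item a functor $\mC_1\to \operatorname{Sq}_{\widetilde H}(\mD)_1$ compatible with source, target and composition.
\end{itemize}
Because the vertical sides of each square are identities, this data amounts to an isomorphism $F(x)\to F(y)$ in $\mD$ for each $1$-cell $x\to y$ of $\mC$, subject to the constraint that $2$-cells force equal images. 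That is exactly a functor $(\tau_1\mC)^{\mathrm{gp}}\to\mD$.

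\textbf{Step 3: conclude.} The bijections are natural in $\mD$, so Yoneda in $\Cat$ gives an isomorphism $I^\bullet\otimes_\Delta N_{int}(\mC)\cong(\tau_1\mC)^{\mathrm{gp}}$. Combined with Step 1, this yields $\mathbb{B}_{\Cat}(\mC) \simeq (\tau_1\mC)^{\mathrm{gp}}$ in $\Ho\Cat$, that is, up to equivalence of categories.

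I expect the main obstacle to be Step 1, namely justifying that the strict $\Delta$-indexed coend with $I^\bullet$ represents the homotopy type given by the fat realization. If the Reedy argument is awkward, the first thing I would try is to show directly that both computations are invariant under replacing $I^\bullet$ by any other resolution, and then compare with the fat version via the standard latching-object argument of \Cref{ap:A}.
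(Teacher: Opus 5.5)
Your proposal is correct and follows the paper's argument: compute $\mathbb B_{\Cat}(\mC)$ as $I^\bullet\otimes_\Delta N_{int}(\mC)$ via \Cref{ap:A}, pass through the same chain of natural bijections involving $\operatorname{Sq}_{\widetilde H}(\mD)$, and conclude by Yoneda. The one step to tighten is your reason for using the $\Delta$-indexed realization instead of the fat one. Degeneracies are split monomorphisms in every simplicial object, so that is not what makes this work. What you actually need is either that the latching maps of $N_{int}(\mC)$ are injective on objects (because $\mathrm{ob}$ preserves colimits and latching maps of simplicial sets are injective), or, more directly, \Cref{rem:locwellp}, which applies since $[0]$ is terminal and monomorphisms are cofibrations in $\Cat$.
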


\subsubsection{$(2,1)$-categories}\label{(2,1)-finality} The canonical model structure on $\Cat$ restricts to one on $\Grpd$. The associated equipment on groupoid-enriched categories models the formal category theory of weak $(2,1)$-categories. Our formulas do not change, since we can still choose $I^n$ as a cosimplicial resolution, so that $\mathbb B_{\Cat} (\mC) \simeq \mathbb{B}_{\Grpd}(\mC)$. 

Categories enriched in groupoids are naturally seen as locally Kan by a hom-wise application of the ordinary nerve, therefore the considerations of $\Cref{homotopyfinality}$ apply: the classifying space $\mathbb{B}_{\mathrm{KQ}}$ defined there, evaluated on a $\Grpd$-enriched category $\mC$, has the homotopy type of the \emph{Duskin-Street nerve} of $\mC$, and $\mathbb{B}_{\Grpd}$ is the fundamental groupoid of such space. 

\subsubsection{The classifying $2$-type}

Moving up on the categorical ladder, for our equipment to model weak $3$-category theory we have two options: choose as enriching base $\mV = \bigl(2\Cat,\otimes_{Gray}\bigr)$ the category of $2$-categories endowed with Gray tensor product and Lack's model structure \cite{Lac04}, or pick $\bigl(2\Cat_{flex} , \times\bigr)$ the cartesian model category of \emph{flexible $2$-categories} with Campbell's model structure \cite{Cam26}.

As in the previous sections, the only thing one has to do to apply our formulas (\ref{prop:classifying-object-resolution-M}) is to come up with a cosimplicial resolution (\ref{cosimpres}) of the terminal object in $2\Cat_{flex}$. The one we used in $\Cat$ was the cosimplicial groupoid $I^n=\mathrm{[n]}^{\mathrm{gp}}$: this is the value of the right adjoint to the object functor $\mathrm{obj} \colon \Cat \to \Set$ on the set with $n+1$ elements; its natural categorification is therefore given by letting $E^{(-)} \colon \Set \to 2\Cat_{flex}$ denote the right adjoint to the functor $\mathrm{obj}$ \cite{Cam26}. Setting $E^n \coloneqq E^S$ for $S$ a set with $n+1$ elements, we obtain a cosimplicial resolution of $\mathbf{1}$ which works both in $2\Cat_{flex}$ with Campbell's model structure and in $2\Cat_{Gray}$ with Lack's.

\subsubsection{Motivic classifying space}\label{motivicclassifying}

Let $\mV$ be motivic spaces. For a $\mV$-category $\mC$, write $\mC(U)$ for the simplicial category with the same objects and hom-spaces $\mC(c,d)(U)$. Taking $R^n(\cst_{\tu_{\mV}})=\Delta^n$, viewed as a constant simplicial presheaf, \Cref{prop:classifying-object-resolution-M} gives, naturally in $\Ho\mV$, $$\mathbb B_{\mathrm{mot}}\mC \cong L_{\mathrm{mot}}\bigl( U\mapsto\operatorname{diag}N_{\bullet}(\mC(U)_{\bullet}) \bigr).$$ Thus the motivic classifying space is obtained by applying motivic localization to the presheaf of simplicial classifying spaces in \Cref{homotopyfinality}.

\subsubsection{Equivariant classifying space}\label{Equivariantclassifying}

We take $\mV$ to be simplicial $G$-set, for $G$ a discrete group. The model structure here is the \emph{genuine} one, where the weak equivalences and fibrations are detected by mapping out of every orbit. Since the orbits are precisely the connected objects, we have, for every $O \in \mO$, an isomorphism of homotopy types $\mathbb{B}_{\mathrm{equiv}}(\mC)^{O} \cong \mathbb{B}_{\mathrm{KQ}}(\mC^O)$ where $\mC^O$ is the simplicial category obtained from $\mC$ whose simplicial hom-set is $\mC^{O}(x,y)=\mC(x,y)^{O}$. It follows that equivariant finality reduces to orbit-wise homotopy finality.

\subsection{Adjunctions}\label{sec:adj}
In this section we study adjunctions of weak $\mV$-functors. 

We remark that, while every vertical arrow $F:\mA\to\mB$ comes, by the equipment property, with a right adjoint $F^{\vee}$ in the horizontal $2$-category (which should be thought of as a dual of $F$) this adjointness does not necessarily take place in $\mV\Cat^h$, since $F^{\vee}$ is not vertical in general.

Let $L:\mA\to\mB$ and $R:\mB\to\mA$ be vertical arrows. An \emph{adjunction} $L\dashv_{\mV\Cat^h}R$ consists of unit and counit $2$-cells $\eta:\id_{\mA}\to RL$ and $\epsilon:LR\to \id_{\mB}$ in the vertical $2$-category such that the triangle identities hold: $(\epsilon L)\circ(L\eta)=\id_L$ and $(R\epsilon)\circ(\eta R)=\id_R.$ The counit corresponds, under closedness (\ref{prop:closed-equipment-M}), to a $2$-cell $$\widehat \epsilon \colon R \to L^{\vee} $$

\begin{lemma}\label{prop:adjunction-exact-square-M}
A $2$-cell $\epsilon:LR\to 1_{\mB}$ is the counit of an adjunction $L\dashv_{\mV\Cat^h}R$ if and only if $\widehat \epsilon  \colon R\to L^\vee$ is invertible.
\end{lemma}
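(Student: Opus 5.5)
Since the companion functor $(-)_*$ is an inclusion (\Cref{conv:companion}) and fully faithful on hom-categories, I will treat vertical $2$-cells as $2$-cells in $\Pro_{\mV}$ between the corresponding horizontal arrows. The lemma then becomes a purely formal statement about the bicategory $\Pro_{\mV}$, in which $L$ has a right adjoint $L^\vee$ with unit $\eta^L\colon \id_{\mA}\to L^\vee L$ and counit $\epsilon^L\colon LL^\vee\to\id_{\mB}$ (\Cref{th:equipment-M}). First I make $\widehat\epsilon$ explicit. Besides the closedness description, $\widehat\epsilon$ is the mate of $\epsilon$ under $L\dashv L^\vee$:
$$\widehat\epsilon\colon R\xrightarrow{\eta^L R} L^\vee L R\xrightarrow{L^\vee\epsilon} L^\vee.$$
To justify this, I note that $L^\vee\cong[L,\id_{\mB}]$ with evaluation $\epsilon^L$, so the closedness transpose of $\epsilon\colon LR\to\id_{\mB}$ coincides with the mate. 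This is the same identification $L^\vee V\cong[L,V]$ already used before \Cref{prop:weightedcolimdef}.

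For the direction ($\Rightarrow$), suppose $\eta,\epsilon$ satisfy the triangle identities. Then $R$ is a right adjoint of $L$ in the vertical $2$-category. Since $(-)_*$ is a $2$-functor that is locally fully faithful, it preserves adjunctions, so $L\dashv R$ also holds in $\Pro_{\mV}$. Right adjoints are unique up to a unique compatible isomorphism, and the comparison $R\to L^\vee$ is exactly the mate of $\epsilon$ computed with the two adjunctions. Hence $\widehat\epsilon$ is invertible; concretely, its inverse is the mate $L^\vee\xrightarrow{\eta L^\vee}RLL^\vee\xrightarrow{R\epsilon^L}R$, and checking that these are mutually inverse is the standard triangle-identity calculation.

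For the direction ($\Leftarrow$), suppose $\widehat\epsilon$ is invertible. I transport the adjunction $L\dashv L^\vee$ along $\widehat\epsilon^{-1}$ and define
$$\eta:=\bigl(\id_{\mA}\xrightarrow{\eta^L}L^\vee L\xrightarrow{\widehat\epsilon^{-1}L}RL\bigr).$$
The transported counit is $LR\xrightarrow{L\widehat\epsilon}LL^\vee\xrightarrow{\epsilon^L}\id_{\mB}$. Using the formula for $\widehat\epsilon$, naturality, and the triangle identity for $L\dashv L^\vee$, this composite equals $\epsilon$. So $(\eta,\epsilon)$ satisfy the triangle identities in $\Pro_{\mV}$. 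The cells $\eta\colon\id_{\mA}\to RL$ and $\epsilon$ have vertical source and target, because vertical arrows are closed under composition (\Cref{lem:representables-closed-M}). Since $\mV\Cat^h(\mA,\mA)$ is a full subcategory of $\Pro_{\mV}(\mA,\mA)$, these cells are vertical $2$-cells, and the triangle identities continue to hold there.

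I expect the main obstacle to be the identification of the closedness transpose $\widehat\epsilon$ with the mate of $\epsilon$. The closedness in \Cref{prop:closed-equipment-M} is stated only through $\Ho\mV$-enriched natural isomorphisms, so I will check on $\pi_0$, via the Yoneda lemma in $\Pro_{\mV}(\mB,\mA)$, that precomposing with $L^\vee$ and composing with $\epsilon^L$ realizes the bijection. This is the same step as in the $\pi_0$ description of $(-)^{\#}$ in \Cref{sec:colim}. Once that identification holds, the rest is formal $2$-category theory.
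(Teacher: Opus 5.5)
Your proposal is correct and follows the same route as the paper: for ($\Rightarrow$) you push the adjunction into $\Pro_{\mV}$ along the inclusion and invoke uniqueness of right adjoints, and for ($\Leftarrow$) you produce an adjunction $L\dashv R$ in $\Pro_{\mV}$ and descend it to $\mV\Cat^h$ by local full faithfulness. You also supply two details the paper leaves implicit: the identification of the closedness transpose $\widehat\epsilon$ with the mate $L^\vee\epsilon\circ\eta^L R$, and the check that the transported counit is exactly $\epsilon$.
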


\begin{proof}
If $\epsilon$ is the counit of an adjunction in ${\mV\Cat^h}$, the equipment pseudofunctor ${\mV\Cat^h}\to \Pro_{\mV}$ carries it to an adjunction in $ \Pro_{\mV}$; uniqueness of right adjoints implies that the comparison map $\widehat \epsilon \colon R \to L^{\vee}$ is invertible. Conversely, if $\widehat \epsilon \colon R \to L^{\vee}$ is invertible, we have an adjunction $L \dashv R$ in $\Pro_{\mV}$. By local fully faithfulness we conclude that it takes place in ${\mV\Cat^h}$.
\end{proof}

\begin{proposition}\label{cor:vertical-right-adjoint-M}
A vertical arrow $L:\mA\to\mB$ admits a right adjoint in $\mV\Cat^h$ if and only if its conjoint $L^\vee:\mB\nrightarrow\mA$ is a vertical arrow.
\end{proposition}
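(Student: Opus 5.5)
The plan is to reduce everything to \Cref{prop:adjunction-exact-square-M}, together with the local full faithfulness of the inclusion $(-)_*\colon\mV\Cat^h\to\Pro_{\mV}$ from \Cref{th:equipment-M}. One point needs care first. Since $\mV\Cat^h(\mB,\mA)$ is defined as a \emph{full} subcategory of $\Pro_{\mV}(\mB,\mA)$ spanned by the representable Quillen profunctors, "vertical" means "isomorphic in $\Pro_{\mV}(\mB,\mA)$ to a representable". So the statement should be read up to isomorphism in $\Pro_{\mV}$.

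\emph{Necessity.} Suppose $R\colon\mB\to\mA$ is vertical and $L\dashv_{\mV\Cat^h}R$ holds, with counit $\epsilon$. By \Cref{prop:adjunction-exact-square-M}, the induced cell $\widehat\epsilon\colon R\to L^\vee$ is invertible in $\Pro_{\mV}(\mB,\mA)$. Hence $L^\vee$ is isomorphic to the representable profunctor $R$. Representability is defined up to isomorphism in $\Pro_{\mV}$ (\Cref{def:representable-Quillen-M}), so $L^\vee$ is itself representable, that is, a vertical arrow. Alternatively one can argue directly: the pseudofunctor $(-)_*$ preserves adjunctions, and right adjoints in the bicategory $\Pro_{\mV}$ are unique up to isomorphism, so $R\cong L^\vee$.

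\emph{Sufficiency.} Suppose $L^\vee$ is vertical, and set $R:=L^\vee$. The equipment supplies unit and counit $\eta\colon\id_{\mA}\Rightarrow L^\vee L$ and $\epsilon\colon LL^\vee\Rightarrow\id_{\mB}$ in $\Pro_{\mV}$, and these satisfy the triangle identities by \Cref{th:equipment-M}. The source and target of each cell are now composites of vertical arrows. Vertical arrows are closed under composition by \Cref{lem:representables-closed-M}, and identities are vertical. Since $\mV\Cat^h(\mA,\mA)\subseteq\Pro_{\mV}(\mA,\mA)$ and $\mV\Cat^h(\mB,\mB)\subseteq\Pro_{\mV}(\mB,\mB)$ are full, $\eta$ and $\epsilon$ are $2$-cells of $\mV\Cat^h$. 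The triangle identities are equalities of $2$-cells and therefore continue to hold there. So $L\dashv_{\mV\Cat^h}L^\vee$.

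Equivalently, for sufficiency one can take the counit $\epsilon$ and observe that $\widehat\epsilon$ is the identity, or more precisely the canonical isomorphism, and then invoke the converse direction of \Cref{prop:adjunction-exact-square-M}.

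The only real obstacle is bookkeeping. The horizontal composition of $\Pro_{\mV}$ must restrict to the composition of $\mV\Cat^h$, which holds because $\mV\Cat^h$ is a locally full sub-$2$-category. Also, "is a vertical arrow" must be understood as "lies in the essential image", which is automatic here because the hom-categories of $\mV\Cat^h$ are replete full subcategories.
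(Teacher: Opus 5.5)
Your proposal is correct and matches the paper's proof: necessity comes from \Cref{prop:adjunction-exact-square-M}, and sufficiency uses local full faithfulness of $\mV\Cat^h\subseteq\Pro_{\mV}$ to conclude that the equipment adjunction $L\dashv L^\vee$ already lives in $\mV\Cat^h$. The only cosmetic difference is that you take $R=L^\vee$ directly, using repleteness and inlining the converse of the lemma. The paper instead chooses an arbitrary vertical $R$ with an isomorphism $\phi\colon R\cong L^\vee$, builds $\epsilon$ from $L\phi$ and the counit, and cites the lemma via $\widehat\epsilon=\phi$.
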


\begin{proof}
If $L\dashv R$, then \Cref{prop:adjunction-exact-square-M} gives an isomorphism $R\cong L^\vee$, so $L^\vee$ is representable. Conversely, suppose that $L^\vee$ is representable, and choose a vertical arrow $R:\mB\to\mA$ and an isomorphism $\phi:R\xrightarrow{\cong}L^\vee$. By local full faithfulness, the composite $LR\xrightarrow{L\varphi}LL^\vee\to \id_{\mB}$ is induced by a $2$-cell $\epsilon:LR \to \id_{\mB}$ in the vertical $2$-category. Since $\widehat \epsilon = \phi$, we conclude by \Cref{prop:adjunction-exact-square-M}.
\end{proof}

\subsubsection{Weak adjunctions in $\mV$-category theory}

Let $L:\mA\to\mB$ be a vertical arrow, that is $L \cong \Lan_{\y_{\mA}}\widetilde L$ for a $\mV$-functor $\widetilde L: \mA\to\widetilde{\y(\mB)} \subset\Psh_{\mV}(\mB).$ Let $G:\Psh_{\mV}(\mB)\to\Psh_{\mV}(\mA)$ be the right Quillen adjoint of $L$. Remember that $L^\vee=\overline{G\y_{\mB}}$, and that $G$ need not be a vertical arrow of the equipment.
\begin{proposition}[Representability criterion for weak adjointness]\label{prop:vertical-right-adjoint-representability-M} In the above notation, $L$ admits a right adjoint in $\mV\Cat^h$ if and only if, for every $b\in\mB$ the presheaf $G\y_{\mB}(b)$ is isomorphic in $\Ho\Psh_{\mV}(\mA)$ to a representable.
\end{proposition}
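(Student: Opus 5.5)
By \Cref{cor:vertical-right-adjoint-M}, $L$ admits a right adjoint in $\mV\Cat^h$ if and only if the conjoint $L^\vee=\overline{G\y_{\mB}}$ is a vertical arrow, i.e.\ a representable Quillen $\mV$-profunctor in the sense of \Cref{def:representable-Quillen-M}. So the task is to show that $\overline{G\y_{\mB}}$ is representable exactly when each $G\y_{\mB}(b)$ is isomorphic in $\Ho\Psh_{\mV}(\mA)$ to a representable presheaf.

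\emph{Sufficiency.} Assume each $G\y_{\mB}(b)$ is homotopy representable. Fix a projective cofibrant replacement $q\colon Q(G\y_{\mB})\xrightarrow{\sim}G\y_{\mB}$ in $\mV\Fun(\mB,\Psh_{\mV}(\mA))_{\mathrm{proj}}$. By \Cref{cor:projectively-cofibrant-objectwise-cofibrant-M}, $Z:=Q(G\y_{\mB})$ is cofibrant-valued. Each $Z(b)$ is weakly equivalent to $G\y_{\mB}(b)$, so it is isomorphic in $\Ho\Psh_{\mV}(\mA)$ to a representable. The normal-form statement in \Cref{def:representable-Quillen-M} then produces an objectwise weak equivalence $Z\to Z^f$ with $Z^f$ valued in $\widetilde{\y(\mA)}$, together with an isomorphism $\overline Z\cong\overline{Z^f}$. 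Since $\overline{G\y_{\mB}}=\Lan_{\y_{\mB}}Z\cong\overline Z$ (as in the proof of \Cref{lem:homotopical-Yoneda-comparison-M}), the profunctor $L^\vee$ is representable.

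\emph{Necessity.} Conversely, suppose $L^\vee\cong\overline X$ in $\Pro_{\mV}(\mB,\mA)$ for some $X\colon\mB\to\widetilde{\y(\mA)}$. Restricting along $\y_{\mB}$ via \Cref{lem:homotopical-Yoneda-comparison-M} gives an isomorphism $L^\vee\y_{\mB}\cong X$ in $\Ho\bigl(\mV\Fun(\mB,\Psh_{\mV}(\mA))_{\mathrm{proj}}\bigr)$. Evaluation at $b$ sends objectwise weak equivalences to weak equivalences, so it descends to homotopy categories; in particular $L^\vee\y_{\mB}(b)\cong X(b)$ in $\Ho\Psh_{\mV}(\mA)$. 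The same construction gives $\gamma_b\colon L^\vee\y_{\mB}(b)\cong G\y_{\mB}(b)$. Composing, $G\y_{\mB}(b)\cong X(b)$, and $X(b)$ is homotopy representable by definition of $\widetilde{\y(\mA)}$.

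\emph{Main point to check.} The delicate step is justifying that isomorphisms in the homotopy category of diagrams, which are zigzags, yield objectwise isomorphisms in $\Ho\Psh_{\mV}(\mA)$. This holds because evaluation is homotopical (\Cref{lem:evaluation-both-quillen-M}). The argument also does not need $G\y_{\mB}(b)$ itself to be cofibrant: we only use its cofibrant replacement, which is why the normal form is applied to $Z$ rather than to $G\y_{\mB}$.
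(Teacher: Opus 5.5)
Your proof is correct and follows the paper's route: the paper simply declares the statement a rephrasing of \Cref{cor:vertical-right-adjoint-M}, using $L^\vee=\overline{G\y_{\mB}}$. You reduce to the same proposition and then spell out what the paper leaves implicit, namely why $\overline{G\y_{\mB}}$ is representable exactly when each $G\y_{\mB}(b)$ is homotopy representable, using the normal form in \Cref{def:representable-Quillen-M} in one direction and restriction along $\y_{\mB}$ in the other.
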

\begin{proof}
This is a rephrasing of \Cref{cor:vertical-right-adjoint-M}
\end{proof}

Let $R:\mB\to\mA$ be a vertical arrow induced (without loss of generality) by a projectively cofibrant functor $\widetilde R:\mB\to\widetilde{\y(\mA)} \subset \operatorname{Psh}_{\mV}(\mA)$. Let $$\epsilon:LR\to \id_{\mB}$$ be a $2$-cell with $G$ denoting the right Quillen adjoint of $L$ as above. After restriction along $\y_{\mB}$, and under the adjointness $L\dashv G$, the morphism $\epsilon$ determines a morphism $[\epsilon^\sharp]: \widetilde R\to G\y_{\mB}$ in $\Ho\bigl( \mV\Fun (\mB,\Psh_{\mV}(\mA))_{\mathrm{proj}} \bigr)$. Since we assumed $\widetilde R$ cofibrant, may choose an enriched natural transformation $$\epsilon^\sharp:\widetilde R\to G\y_{\mB}$$ representing it.
\begin{lemma}\label{prop:adjunction-model-M}
The $2$-cell $\epsilon$ is the counit of an adjunction $L\dashv_{\mV\Cat^h}R$ if and only if, for every $a\in\mA$ and $b\in\mB$, the morphism $$\epsilon^\sharp_{a,b}: \widetilde R(b)(a) \to \Psh_{\mV}(\mB) \bigl(\widetilde L(a),\y_{\mB}(b)\bigr)$$ is a weak equivalence in $\mV$.
\end{lemma}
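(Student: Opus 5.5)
By \Cref{prop:adjunction-exact-square-M}, $\epsilon$ is the counit of an adjunction $L\dashv_{\mV\Cat^h}R$ if and only if the transposed $2$-cell $\widehat\epsilon\colon R\to L^\vee$ is invertible in $\Pro_{\mV}(\mB,\mA)$. So the plan is to compute $\widehat\epsilon$ explicitly and read off its invertibility componentwise.

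First, identify $\widehat\epsilon$ after restriction along $\y_{\mB}$. Both $R$ and $L^\vee$ are Quillen profunctors $\Psh_{\mV}(\mB)\to\Psh_{\mV}(\mA)$: we have $R\cong\Lan_{\y_{\mB}}\widetilde R$, and $L^\vee=\overline{G\y_{\mB}}$. By \Cref{lem:homotopical-Yoneda-comparison-M}, invertibility of $\widehat\epsilon$ in $\Pro_{\mV}(\mB,\mA)$ is equivalent to invertibility of its restriction
\[
\widehat\epsilon\,\y_{\mB}\colon \widetilde R\to L^\vee\y_{\mB}
\]
in $\Ho\bigl(\mV\Fun(\mB,\Psh_{\mV}(\mA))_{\mathrm{proj}}\bigr)$.

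Next, compose with the canonical isomorphism $\gamma\colon L^\vee\y_{\mB}\cong G\y_{\mB}$ from the proof of \Cref{th:equipment-M}. I claim that $\gamma\circ\widehat\epsilon\,\y_{\mB}=[\epsilon^\sharp]$. To see this, unwind the closedness transpose: $\widehat\epsilon$ is the mate of $\epsilon$ under $L\dashv L^\vee$, namely $R\to L^\vee LR\to L^\vee$. Restricted along $\y_{\mB}$, and transported along $\gamma$, this becomes the transpose of $\epsilon\y_{\mB}\colon L\widetilde R\to\y_{\mB}$ under the derived adjunction $(\ast)$ of \Cref{lem:diagram-derived-adjunction-M}. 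The latter uses that $\widetilde R$ is cofibrant and $\y_{\mB}$ is fibrant, and it is exactly how $[\epsilon^\sharp]$ was defined. The key input here is the relation between $\gamma$ and the equipment counit $\epsilon_{L}\colon LL^\vee\to\id$, established in the proof of \Cref{th:equipment-M}: $\epsilon_{L,\y}$ corresponds to $\gamma$ under $(\ast)$. A naturality/triangle-identity argument of the kind used in the second triangle identity there then gives the claim. I expect this bookkeeping of mates to be the main obstacle, since one has to check that the transposes under closedness, under $L\dashv L^\vee$ in $\Pro_{\mV}$, and under the Quillen adjunction $L\dashv G$ all agree.

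Given the claim, $\widehat\epsilon$ is invertible if and only if $[\epsilon^\sharp]$ is an isomorphism in the homotopy category of the projective model structure. Isomorphisms there are detected objectwise, since projective weak equivalences are objectwise. Hence this holds if and only if $\epsilon^\sharp_b\colon\widetilde R(b)\to G\y_{\mB}(b)$ is a weak equivalence in $\Psh_{\mV}(\mA)$ for each $b$, which in turn holds if and only if each component
\[
\epsilon^\sharp_{a,b}\colon \widetilde R(b)(a)\to G\y_{\mB}(b)(a)
\]
is a weak equivalence in $\mV$.

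Finally, $G\y_{\mB}(b)(a)\cong\Psh_{\mV}(\mB)\bigl(L\y_{\mA}(a),\y_{\mB}(b)\bigr)=\Psh_{\mV}(\mB)\bigl(\widetilde L(a),\y_{\mB}(b)\bigr)$ by adjunction and the identification $L\y_{\mA}\cong\widetilde L$. This yields the stated criterion.
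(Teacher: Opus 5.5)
Your proposal is correct and follows the paper's own route. The paper likewise reduces via \Cref{prop:adjunction-exact-square-M} to invertibility of $\widehat\epsilon$, restricts along $\y_{\mB}$ using \Cref{lem:homotopical-Yoneda-comparison-M}, and identifies $\widehat\epsilon\,\y_{\mB}$ with $[\epsilon^\sharp]$ through a commutative triangle with $\lambda_L=\gamma$, which it simply asserts. You add the mate bookkeeping behind that triangle, together with the objectwise detection and the identification $G\y_{\mB}(b)(a)=\Psh_{\mV}(\mB)\bigl(\widetilde L(a),\y_{\mB}(b)\bigr)$, all of which the paper leaves implicit.
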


\begin{proof}
By \Cref{prop:adjunction-exact-square-M} it suffices to determine when $\widehat \epsilon $ is invertible. Let $\lambda_L:L^\vee\y_{\mB}\xrightarrow{\cong}G\y_{\mB}$ be the canonical isomorphism that exists by definition of $L^{\vee}$. Similarly $\lambda_R \colon R\y_{\mB}\cong\widetilde R$ by definition of $\widetilde R$. We have a commutative triangle $$\begin{tikzcd}[column sep=4.5em,row sep=3em] R\y_{\mB} \ar[r,"{\widehat{\epsilon} \y_{\mB}}"] \ar[dr,"{[\epsilon^\sharp]}"'] & L^\vee\y_{\mB} \ar[d,"\lambda_L","\cong"'] \\ & G\y_{\mB}. \end{tikzcd}$$ Hence $\widehat \epsilon$ is invertible if and only if (\ref{lem:homotopical-Yoneda-comparison-M}) $\widehat \epsilon\y_{\mB}$ is invertible if and only if $[\epsilon^\sharp]$ is invertible.
\end{proof}

\subsubsection{Weak adjointness of strict $\mV$-functors}\label{sec:adjstrict}

\begin{proposition}\label{cor:right-adjoint-strict-M}
Let $f:\mA\to\mB$ be a strict $\mV$-functor. The vertical arrow $f_!$ admits a right adjoint if and only if $\mB(f-,b)$ is isomorphic in $\Ho\Psh_{\mV}(\mA)$ to a representable for every $b\in\mB$.
\end{proposition}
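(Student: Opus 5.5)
This is a direct specialization of \Cref{prop:vertical-right-adjoint-representability-M} to the vertical arrow $L=f_!$. The plan is to identify the right Quillen adjoint $G$ of the left Quillen functor representing $f_!$, and then compute $G\y_{\mB}$.

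First we fix a representative. By \Cref{not:verticalstrictfunctor}, $f_!=\overline{\y_{\mB}f}=\Lan_{\y_{\mA}}(\y_{\mB}f)$, which is legitimate because $\y_{\mB}f$ is cofibrant-valued. We may therefore take $\widetilde L=\y_{\mB}f\colon\mA\to\Psh_{\mV}(\mB)$.

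Next we check that $\widetilde L$ takes values in $\widetilde{\y(\mB)}$ (\Cref{def:homotopy-representables-M}). Its values are representables $\y_{\mB}(fa)$, which are projectively cofibrant. Since $\mB$ is locally fibrant, they are also objectwise fibrant, hence projectively fibrant. Even without this check, \Cref{lem:representable-normal-form-M} allows replacing $\widetilde L$ by an objectwise equivalent functor with values in $\widetilde{\y(\mB)}$ without changing $f_!$ in $\Pro_{\mV}(\mA,\mB)$. So the hypotheses of \Cref{prop:vertical-right-adjoint-representability-M} are met.

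Then we identify $G$. As noted right after \Cref{not:verticalstrictfunctor}, the enriched right adjoint of $\Lan_{\y_{\mA}}(\y_{\mB}f)$ is $\res_f$. This follows from the enriched Yoneda lemma: $\Psh_{\mV}(\mB)(\y_{\mB}fa,P)\cong P(fa)$. Hence
\[
G\y_{\mB}(b)=\res_f\y_{\mB}(b)=\mB(f-,b).
\]
Applying \Cref{prop:vertical-right-adjoint-representability-M}, $f_!$ admits a right adjoint in $\mV\Cat^h$ if and only if each $\mB(f-,b)$ is isomorphic in $\Ho\Psh_{\mV}(\mA)$ to a representable. Equivalently, by \Cref{cor:vertical-right-adjoint-M}, this says that $f^!=\overline{\res_f\y_{\mB}}$ (\Cref{not:conjointofvertical}) is representable.

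The only step needing care is the claim that representability of $f^!$ can be tested objectwise on $\res_f\y_{\mB}$. One has to pass from the objectwise condition ``$\mB(f-,b)\cong$ representable in $\Ho\Psh_{\mV}(\mA)$'' to a $\mV$-functor $\mB\to\widetilde{\y(\mA)}$ whose bar construction is isomorphic to $f^!$. This is exactly what \Cref{lem:representable-normal-form-M} provides, applied to a projective cofibrant replacement $Q(\res_f\y_{\mB})$, whose values are cofibrant by \Cref{cor:projectively-cofibrant-objectwise-cofibrant-M} and weakly equivalent to $\mB(f-,b)$. This is already implicit in the proof of \Cref{prop:vertical-right-adjoint-representability-M}, so no new obstacle is expected.
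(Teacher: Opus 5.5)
Your proposal is correct and matches the paper's proof: specialize \Cref{prop:vertical-right-adjoint-representability-M} to $L=f_!$, identify the right Quillen adjoint as $\res_f$, and observe that $\res_f\y_{\mB}(b)=\mB(f-,b)$. Your extra checks, namely that $\y_{\mB}f$ lands in $\widetilde{\y(\mB)}$ and that the objectwise condition upgrades to representability of $f^!$ via \Cref{lem:representable-normal-form-M}, are sound elaborations of steps the paper leaves implicit.
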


\begin{proof}
Follows by \ref{prop:vertical-right-adjoint-representability-M} noting that the right Quillen adjoint of $f_!$ is $\res_f$, and $\res_f\y_{\mB}(b)=\mB(f-,b)$.
\end{proof}

\begin{proposition}\label{cor:adjunction-strict-M}
Let $f:\mA\to\mB$ and $g:\mB\to\mA$ be strict $\mV$-functors, and let $\epsilon:fg\to \id_{\mB}$ be an enriched natural transformation. The induced $2$-cell $f_!g_!\to \id_{\mB}$ is the counit of an adjunction $f_!\dashv_{\mV\Cat^h}g_!$ if and only if $$\mA(a,gb) \xrightarrow{\,f\,} \mB(fa,fgb) \xrightarrow{\,\mB(fa,\epsilon_b)\,} \mB(fa,b)$$ is a weak equivalence in $\mV$ for every $a\in\mA$ and $b\in\mB$.
\end{proposition}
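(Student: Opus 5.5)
The plan is to reduce the statement to \Cref{prop:adjunction-model-M}, applied with $L=f_!$ and $R=g_!$. First I fix the data that lemma needs. For $L=f_!$ we may take $\widetilde L=\y_{\mB}f$, which is valued in representables, so in $\widetilde{\y(\mB)}$ up to the fibrant replacement permitted by \Cref{lem:representable-normal-form-M}. The right Quillen adjoint is $G=\res_f$, hence $G\y_{\mB}(b)=\mB(f-,b)$. For $R=g_!$ we may take $\widetilde R=\y_{\mA}g$, that is $\widetilde R(b)(a)=\mA(a,gb)$. This functor is projectively cofibrant as an object of $\mV\Fun(\mB,\Psh_{\mV}(\mA))_{\mathrm{proj}}$ in the cases needed. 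If that is not automatic, I will instead replace it by $Q\widetilde R\to\widetilde R$ and carry along the objectwise weak equivalence; this does not affect the conclusion, because weak equivalences satisfy 2-out-of-3.

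Next I identify the morphism $\epsilon^\sharp\colon \widetilde R\to G\y_{\mB}$ that \Cref{prop:adjunction-model-M} attaches to the $2$-cell $f_!g_!\to\id_{\mB}$ induced by $\epsilon$. The key claim is that it can be represented by the strict enriched natural transformation
$$\mA(a,gb)\xrightarrow{f}\mB(fa,fgb)\xrightarrow{\mB(fa,\epsilon_b)}\mB(fa,b).$$
To prove the claim I restrict $f_!g_!$ along $\y_{\mB}$. Since $f_!=\Lan_{\y_{\mA}}\y_{\mB}f$, we get $f_!g_!\y_{\mB}(b)=f_!\y_{\mA}(gb)\cong\y_{\mB}(fgb)$, and the $2$-cell restricts to $\y_{\mB}(\epsilon_b)\colon \y_{\mB}(fgb)\to\y_{\mB}(b)$. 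I then transpose under the strict adjunction $f_!\dashv\res_f$. The unit of that adjunction at $\y_{\mA}(gb)$ is the map $\mA(-,gb)\to\mB(f-,fgb)$ given by the action of $f$ on homs. Composing it with $\res_f\y_{\mB}(\epsilon_b)$ yields exactly the displayed composite. Because everything here is strict and the source is (replaced to be) cofibrant, this strict transpose represents the derived transpose $[\epsilon^\sharp]$ given by \Cref{lem:diagram-derived-adjunction-M}.

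Finally, the target $\Psh_{\mV}(\mB)(\widetilde L(a),\y_{\mB}(b))=\Psh_{\mV}(\mB)(\y_{\mB}fa,\y_{\mB}b)$ is isomorphic to $\mB(fa,b)$ by the enriched Yoneda lemma. So the criterion of \Cref{prop:adjunction-model-M} becomes precisely the stated condition, and the proposition follows. I expect the main obstacle to be the bookkeeping in the middle step. One has to check that the representative chosen for the $2$-cell in $\Pro_{\mV}$, the cofibrant replacements, and the identification of $\y_{\mB}(\epsilon_b)$ are compatible, so that the strict transpose really represents $[\epsilon^\sharp]$. My first approach will be to invoke naturality of the derived adjunction isomorphism $(\ast)$ with respect to the cofibrant replacement map.
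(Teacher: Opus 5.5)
Your proposal is correct and follows the paper's route: the paper's proof is just the reduction to \Cref{prop:adjunction-model-M} with $\widetilde L=\y_{\mB}f$, $G=\res_f$ and $\widetilde R=\y_{\mA}g$. You also make explicit what the paper leaves implicit, namely that $\epsilon^\sharp$ is the strict transpose under $f_!\dashv\res_f$, and that the enriched Yoneda lemma gives $\Psh_{\mV}(\mB)(\y_{\mB}fa,\y_{\mB}b)\cong\mB(fa,b)$. Your cofibrancy caveat is also correct: $\y_{\mA}g$ is in general only objectwise cofibrant, and precomposing with a projective cofibrant replacement and using 2-out-of-3 is what justifies the ``without loss of generality'' in that lemma.
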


\begin{proof}
Follows by \Cref{prop:adjunction-model-M}.
\end{proof}

\subsection{A Whitehead theorem for $\mV$-categories}\label{sec:whitehead}

\begin{definition} [\cite{Woo82}]\label{def:fully-faithful-arrow-M} A vertical arrow $F:\mA\to\mB$ is \emph{fully faithful} if the unit $\eta_F \colon \id_{\mA} \to F^{\vee}F$ of its companion-conjoint adjunction is invertible.

\end{definition}

Let $F:\mA\to\mB$ be induced by a $\mV$-functor $\widetilde{F}:\mA\to\widetilde{\y(\mB)} \subset \Psh_{\mV}(\mB)$, and let $G$ be the right Quillen adjoint of $F=\Lan_{\y_{\mA}}\widetilde{F}$.

\begin{proposition}\label{prop:fully-faithful-arrow-M}
The vertical arrow $F$ is fully faithful if and only if, for every $a,a'\in\mA$, the morphism $$\mA(a,a') \to \Psh_{\mV}(\mB)\bigl(\widetilde{F}(a),\widetilde{F}(a')\bigr)$$ induced by $\widetilde{F}$ is a weak equivalence in $\mV$.
\end{proposition}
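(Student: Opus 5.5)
The plan is to reduce invertibility of $\eta_F$ to a statement about its restriction along $\y_{\mA}$, and then compute that restriction using the explicit unit built in the proof of \Cref{th:equipment-M}. By \Cref{lem:homotopical-Yoneda-comparison-M}, a $2$-cell in $\Pro_{\mV}(\mA,\mA)$ is invertible if and only if its restriction along $\y_{\mA}$ is invertible in $\Ho\bigl(\mV\Fun(\mA,\Psh_{\mV}(\mA))_{\mathrm{proj}}\bigr)$. Isomorphisms in the homotopy category of the projective model structure are exactly the maps whose underlying zig-zags are objectwise weak equivalences. So it suffices to show that $\eta_{\y_{\mA}}$ is a componentwise weak equivalence, i.e. that for all $a,a'$ the map $\y_{\mA}(a')(a)=\mA(a,a')\to (F^\vee F\y_{\mA}(a'))(a)$ is an isomorphism in $\Ho\mV$, exactly when the stated maps are weak equivalences.

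Recall from the proof of \Cref{th:equipment-M} that, taking $X=\widetilde F$ (so $\chi$ may be taken to be the identity, since $F=\Lan_{\y_{\mA}}\widetilde F$ and $\widetilde F$ is cofibrant-valued), one has
\[\eta_{\y_{\mA}}=(\Gamma_{\widetilde F})^{-1}\circ u_{\widetilde F}.\]
Here $\Gamma_{\widetilde F}\colon F^\vee\widetilde F\to G\widetilde F$ is an isomorphism in the homotopy category; this is where the hypothesis that $\widetilde F$ lands in $\widetilde{\y(\mB)}$ is used. Also $u_{\widetilde F}\colon \y_{\mA}\to G\widetilde F$ is the adjoint transpose of $\chi=\id$. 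Hence $\eta_F$ is invertible if and only if $u_{\widetilde F}$ is an objectwise weak equivalence. Alternatively, one can invoke \Cref{prop:equipment-implies-reconstruction-M}: $g_P$ is a weak equivalence for cofibrant-fibrant $P=\widetilde F(a')$, which identifies $F^\vee F\y_{\mA}(a')$ with $G\widetilde F(a')$ up to weak equivalence.

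Next I compute $u_{\widetilde F}$ concretely. The strict transpose of $\id\colon F\y_{\mA}=\widetilde F\to\widetilde F$ under $F\dashv G$ is the unit $\y_{\mA}\to GF\y_{\mA}=G\widetilde F$. Since $G$ is the right adjoint of $\Lan_{\y_{\mA}}\widetilde F$, we have $G(P)(a)=\Psh_{\mV}(\mB)(\widetilde F(a),P)$. At $(a,a')$ this unit is precisely the map
\[\mA(a,a')\to\Psh_{\mV}(\mB)\bigl(\widetilde F(a),\widetilde F(a')\bigr)\]
induced by $\widetilde F$. I also need that the derived adjunction isomorphism $(\ast)$ of \Cref{lem:diagram-derived-adjunction-M} sends $[\id]$ to the class of this strict transpose. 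This holds because $\y_{\mA}$ is objectwise cofibrant and $\widetilde F$ is objectwise fibrant, since it takes values in cofibrant-fibrant objects. Combining these facts gives both directions at once.

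The main obstacle I expect is bookkeeping: making sure that the homotopy-category maps $u_{\widetilde F}$, $\Gamma_{\widetilde F}$ and the replacement $\overline{(-)}$ used to define $F^\vee$ agree with the strict maps up to the canonical isomorphisms. In particular, I must ensure that the choice $\chi=\id$ is legitimate, which I expect follows from the defining isomorphism $\overline{\widetilde F}\cong F$ in \Cref{lem:homotopical-Yoneda-comparison-M}. A second point to check is that "invertible in $\Ho$ of the projective structure" can be tested componentwise; this is standard, since weak equivalences are objectwise and the model structure is saturated.
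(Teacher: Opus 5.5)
Your proposal is correct and follows the paper's proof. You restrict $\eta_F$ along $\y_{\mA}$ via \Cref{lem:homotopical-Yoneda-comparison-M}, and you identify that restriction, up to the invertible $\Gamma_{\widetilde F}$ from the proof of \Cref{th:equipment-M}, with $u_{\widetilde F}$, the restriction along $\y_{\mA}$ of the strict unit of $F\dashv G$, whose $(a,a')$-component is the map induced by $\widetilde F$. Your extra checks only make explicit what the paper's one-line argument leaves implicit: taking $\chi$ to be the canonical isomorphism $F\y_{\mA}\cong\widetilde F$, and noting that $(\ast)$ sends strict maps to their strict transposes because $\y_{\mA}$ is objectwise cofibrant and $\widetilde F$ is objectwise fibrant.
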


\begin{proof}
As always, using \ref{lem:homotopical-Yoneda-comparison-M}, in order to check the invertibility of $\eta_F:\id_{\mA}\to F^\vee F$ we restrict on $\y_{\mA}$. This restriction  is constructed in the proof of \Cref{th:equipment-M} from the morphism $u_{\widetilde{F}}:\y_{\mA}\to G\widetilde{F}$ obtained by restriction on $\y_{\mA}$ of the unit of the Quillen adjunction $F \dashv G$.
\end{proof}

\begin{corollary}\label{cor:fully-faithful-strict-M}
Let $u:\mA\to\mB$ be a strict $\mV$-functor. The associated vertical arrow $u_!$ is fully faithful if and only if $$\mA(a,a') \to \mB(ua,ua')$$ is a weak equivalence in $\mV$ for every $a,a'\in\mA$.
\end{corollary}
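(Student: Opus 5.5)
This is an application of \Cref{prop:fully-faithful-arrow-M} to the vertical arrow $u_!$, so the plan is to make the data in that proposition explicit for a strict $\mV$-functor. By \Cref{not:verticalstrictfunctor}, $u_! = \overline{\y_{\mB}u} = \Lan_{\y_{\mA}}\y_{\mB}u$. Hence we may take $\widetilde F = \y_{\mB}u \colon \mA \to \Psh_{\mV}(\mB)$.

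First I check that this is a legitimate choice of $\widetilde F$, i.e.\ that $\y_{\mB}u$ lands in $\widetilde{\y(\mB)}$. Each $\y_{\mB}(ub)$ is isomorphic in $\Ho\Psh_{\mV}(\mB)$ to a representable, trivially. It is projectively cofibrant because $\Psh_{\mV}(\mB)(\y_{\mB}b,-)\cong ev_b$ preserves trivial fibrations and the unit is cofibrant. It is projectively fibrant because $\mB$ is locally fibrant, so $\mB(-,b)$ is objectwise fibrant.

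If one prefers not to rely on fibrancy, the normal-form remark in \Cref{def:representable-Quillen-M} gives instead an objectwise weak equivalence $\y_{\mB}u \to Z^f$ with values in $\widetilde{\y(\mB)}$. In that case I would check that postcomposing with it does not change the homotopy class of the comparison maps. This holds because the maps $\Psh_{\mV}(\mB)(\y_{\mB}ua,\y_{\mB}ua')\to\Psh_{\mV}(\mB)(\y_{\mB}ua, Z^f(a'))$ are weak equivalences between fibrant objects, since the source is cofibrant.

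Next, the enriched Yoneda lemma gives an isomorphism $\Psh_{\mV}(\mB)(\y_{\mB}(ua),\y_{\mB}(ua'))\cong \mB(ua,ua')$. Under it, the map $\mA(a,a')\to \Psh_{\mV}(\mB)(\widetilde F(a),\widetilde F(a'))$ induced by $\widetilde F=\y_{\mB}u$ is precisely the action $u_{a,a'}\colon\mA(a,a')\to\mB(ua,ua')$. This is the naturality of the Yoneda isomorphism composed with the functoriality of $u$. \Cref{prop:fully-faithful-arrow-M} then says that $u_!$ is fully faithful if and only if every $u_{a,a'}$ is a weak equivalence, which is the statement.

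The only step that needs care is the legitimacy of $\widetilde F=\y_{\mB}u$ as a representative inside $\widetilde{\y(\mB)}$, in particular its fibrancy. I expect local fibrancy of $\mB$ to settle this; the replacement argument above is the fallback if it does not.
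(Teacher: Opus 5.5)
Your proposal is correct and is exactly the paper's argument. The paper's proof simply takes $F=u_!$ and $\widetilde F=\y_{\mB}u$ in \Cref{prop:fully-faithful-arrow-M}, leaving the Yoneda identification $\Psh_{\mV}(\mB)(\y_{\mB}ua,\y_{\mB}ua')\cong\mB(ua,ua')$ implicit. Your check that $\y_{\mB}u$ lands in $\widetilde{\y(\mB)}$ is sound, since objects of the equipment are locally fibrant-cofibrant and so $\mB(-,b)$ is projectively fibrant; this makes the fallback replacement argument unnecessary.
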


\begin{proof}
Follows by taking $F=u_{!}$ and $\widetilde{F}=\y_{\mB}u$ in \Cref{prop:fully-faithful-arrow-M}.
\end{proof}

\begin{definition}\label{def:essentially-surjective-arrow-M}
We define a vertical arrow $F:\mA\to\mB$ to be \emph{essentially surjective} when there is an $\widetilde{F}:\mA\to\widetilde{\y(\mB)} \subset \Psh_{\mV}(\mB) $ inducing it such that for every $b\in\mB$, there exist $a\in\mA$ and an isomorphism $\widetilde{F}(a)\cong\y_{\mB}(b)$ in $\Ho\Psh_{\mV}(\mB)$.
\end{definition}

\begin{proposition}\label{prop:vertical-equivalence-M}
For a vertical arrow $F:\mA\to\mB$ the following are equivalent: \begin{itemize}
    
\item[(i)] $F$ is a fully faithful and essentially surjective arrow (\ref{def:fully-faithful-arrow-M},\ref{def:essentially-surjective-arrow-M});
\item[(ii)] $F$ is an adjoint equivalence in the $2$-category $\mV\Cat^h$.
\end{itemize} 
\end{proposition}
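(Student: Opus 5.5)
We argue through the companion--conjoint adjunction $F\dashv F^\vee$ in $\Pro_{\mV}$ together with \Cref{cor:vertical-right-adjoint-M} and \Cref{prop:adjunction-exact-square-M}. Throughout, fix $\widetilde F:\mA\to\widetilde{\y(\mB)}$ inducing $F$ and let $G$ be the right Quillen adjoint of $F=\Lan_{\y_{\mA}}\widetilde F$. Recall that $F^\vee\y_{\mB}\cong G\y_{\mB}$ and $G\y_{\mB}(b)(a)=\Psh_{\mV}(\mB)(\widetilde F(a),\y_{\mB}(b))$.

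For (ii)$\Rightarrow$(i), let $R$ be an adjoint inverse of $F$ in $\mV\Cat^h$, with invertible unit and counit. By \Cref{prop:adjunction-exact-square-M} we have $R\cong F^\vee$. Under this identification the unit of $F\dashv R$ corresponds to the companion--conjoint unit $\eta_F$, by uniqueness of adjoints in $\Pro_{\mV}$ and the fact that the pseudofunctor $\mV\Cat^h\to\Pro_{\mV}$ preserves adjunctions. Hence $\eta_F$ is invertible, and $F$ is fully faithful.

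For essential surjectivity, the counit $FR\cong\id_{\mB}$ restricted along $\y_{\mB}$ gives $F\widetilde R(b)\cong\y_{\mB}(b)$ in $\Ho\Psh_{\mV}(\mB)$, where $\widetilde R(b)$ is isomorphic in $\Ho\Psh_{\mV}(\mA)$ to some $\y_{\mA}(a)$. Since $\mathbf LF$ preserves these isomorphisms and $F\y_{\mA}(a)\cong\widetilde F(a)$, we obtain $\widetilde F(a)\cong\y_{\mB}(b)$. The only point to check is that essential surjectivity is independent of the chosen $\widetilde F$. This holds because any two choices are objectwise weakly equivalent by \Cref{lem:homotopical-Yoneda-comparison-M}.

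For (i)$\Rightarrow$(ii), we first show that $F^\vee$ is vertical, using \Cref{prop:vertical-right-adjoint-representability-M}. Given $b$, choose $a$ with $\widetilde F(a)\cong\y_{\mB}(b)$ in $\Ho\Psh_{\mV}(\mB)$. Both $\widetilde F(a)$ and $\y_{\mB}(b)$ are cofibrant-fibrant, the latter because $\mB$ is locally fibrant-cofibrant. The isomorphism is therefore realized by a homotopy equivalence, and it induces
\[
G\y_{\mB}(b)=\Psh_{\mV}(\mB)\bigl(\widetilde F(-),\y_{\mB}(b)\bigr)\simeq\Psh_{\mV}(\mB)\bigl(\widetilde F(-),\widetilde F(a)\bigr),
\]
using that the enriched hom between cofibrant and fibrant objects is homotopy invariant in a $\mV$-model category. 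By full faithfulness (\Cref{prop:fully-faithful-arrow-M}), the right-hand side is weakly equivalent to $\mA(-,a)=\y_{\mA}(a)$. So $G\y_{\mB}(b)$ is isomorphic to a representable in $\Ho\Psh_{\mV}(\mA)$. Hence $F$ has a right adjoint $R\cong F^\vee$ in $\mV\Cat^h$, whose unit is identified with $\eta_F$ and is therefore invertible.

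It remains to show that the counit $\epsilon:FF^\vee\to\id_{\mB}$ is invertible. By \Cref{lem:homotopical-Yoneda-comparison-M} it suffices to check this after restricting along $\y_{\mB}$, that is, componentwise in $\Ho\Psh_{\mV}(\mB)$. By the triangle identity $\epsilon F\circ F\eta=\id_F$ with $\eta$ invertible, $\epsilon F$ is invertible. Evaluating on $\y_{\mA}(a)$, the component $\epsilon_{\widetilde F(a)}$ is invertible in $\Ho\Psh_{\mV}(\mB)$. Naturality of $\epsilon$ in $\Ho$ with respect to the isomorphism $\widetilde F(a)\cong\y_{\mB}(b)$ then gives invertibility of $\epsilon_{\y_{\mB}(b)}$ for every $b$. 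Thus $\epsilon$ is invertible, and the adjunction $F\dashv R$ is an adjoint equivalence.

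The main obstacle is this last transport: $\epsilon$ is only a $2$-cell in $\Pro_{\mV}$, so its components must be made natural with respect to homotopy-category isomorphisms between non-representable objects. I would handle this by passing to the derived pseudofunctor $\mC\mapsto\Ho\Psh_{\mV}(\mC)$ used in the proof of \Cref{prop:equipment-implies-reconstruction-M}. There $\mathbf L\epsilon$ is an honest natural transformation on $\Ho\Psh_{\mV}(\mB)$, and its invertibility on representables forces invertibility of $\epsilon$ by \Cref{lem:homotopical-Yoneda-comparison-M}.
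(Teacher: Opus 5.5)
Your proof is correct and follows essentially the paper's argument. For (ii)$\Rightarrow$(i) you use $R\cong F^\vee$ to get full faithfulness, and restrict $FR\cong\mathrm{id}$ along $\y_{\mB}$ to get essential surjectivity. For (i)$\Rightarrow$(ii) you use the triangle identity to make the counit invertible on each $\widetilde F(a)$, transport this to $\y_{\mB}(b)$ by essential surjectivity, and obtain verticality of $F^\vee$ from $G\y_{\mB}(b)\cong G\widetilde F(a)\cong\y_{\mA}(a)$.

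The differences are cosmetic. You prove verticality of $F^\vee$ first rather than last. You also spell out the choice-independence of $\widetilde F$ and the naturality transport through the derived pseudofunctor $\mathcal C\mapsto\mathrm{Ho}\,\mathrm{Psh}_{\mathcal V}(\mathcal C)$, both of which the paper leaves implicit.
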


\begin{proof}
Suppose first that $F$ is an adjoint equivalence, and let $H:\mB\to\mA$ be an inverse, induced by $\widetilde{H}:\mB\to\widetilde{\y(\mA)}$. We have $H\cong F^{\vee}$ so that $F$ is fully faithful. In order to see that it is essentially surjective, we restrict the isomorphism $FH\cong \id_{\mB}$ along $\y_{\mB}$ obtaining $\y_{\mB}(b) \cong FH\y_{\mB}(b) \cong F\widetilde{H}(b)$, and we conclude by choosing for every $b \in \mB$ an element $a\in\mA$ and an isomorphism $\widetilde{H}(b)\cong\y_{\mA}(a)$. In the other direction, suppose that $F$ is fully faithful and essentially surjective, then the unit of $F\dashv F^\vee$ is invertible by definition. Regarding the counit $\epsilon:FF^\vee\to \id_{\mB}$, the triangle identity implies that it is invertible after restriction to every object of the form $F\y_{\mA}(a)$, and hence to every $\widetilde{F}(a)$. By essential surjectivity, it is therefore invertible after restriction to every representable $\y_{\mB}(b)$, which means (\ref{lem:homotopical-Yoneda-comparison-M}) that it is invertible. Since $\mV\Cat^h$ is locally full in $\Pro_{\mV}$ the unit and counit belong to the vertical $2$-category as long as $F^{\vee}$ is a vertical arrow. We check this. For every $a\in\mA$, \Cref{prop:fully-faithful-arrow-M} gives an isomorphism $\y_{\mA}(a)\cong G\widetilde{F}(a)$ in $\Ho\Psh_{\mV}(\mA)$. Given $b\in\mB$, we choose $a\in\mA$ and an isomorphism $\y_{\mB}(b)\cong \widetilde{F}(a)$, so that $G\y_{\mB}(b)\cong\y_{\mA}(a)$, from which we obtain that $F^\vee=\overline{G\y_{\mB}}$ is a vertical arrow. 
\end{proof}

\begin{definition}[\cite{DK80}]
  A strict $\mV$-functor $u:\mA\to\mB$ is a \emph{Dwyer-Kan equivalence} if $$\mA(a,a')\to\mB(ua,ua')$$ is a weak equivalence in $\mV$ for every $a,a'\in\mA$, and the induced functor $$u_{\pi_0}:\mA_{\pi_0}\to\mB_{\pi_0}$$ is essentially surjective, where $\mA_{\pi_0}(a,a') \coloneqq (\Ho\mV)\bigl(\tu_{\mV},\mA(a,a')\bigr)$ (and $\operatorname{obj} \mA_{\pi_0} \coloneqq \operatorname{obj} \mA$).
\end{definition}

\begin{corollary}[Whitehead theorem for $\mV$-categories]\label{cor:DK-equivalence-vertical-M}
A strict $\mV$-functor $u:\mA\to\mB$ is a Dwyer--Kan equivalence if and only if the associated vertical arrow $u_!:\mA\to\mB$ is an adjoint equivalence in $\mV\Cat^h$.
\end{corollary}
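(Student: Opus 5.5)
The plan is to reduce the statement to \Cref{prop:vertical-equivalence-M} applied to $F=u_!$ with the canonical choice $\widetilde F=\y_{\mB}u\colon\mA\to\Psh_{\mV}(\mB)$, and then to match the two halves of the Dwyer--Kan condition with full faithfulness (\Cref{def:fully-faithful-arrow-M}) and essential surjectivity (\Cref{def:essentially-surjective-arrow-M}).

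\textbf{Full faithfulness.} By \Cref{cor:fully-faithful-strict-M}, $u_!$ is fully faithful exactly when each $\mA(a,a')\to\mB(ua,ua')$ is a weak equivalence. This is the first half of the Dwyer--Kan condition, so it needs no further work.

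\textbf{Essential surjectivity.} I must show that $u_{\pi_0}$ is essentially surjective if and only if $u_!$ is essentially surjective in the sense of \Cref{def:essentially-surjective-arrow-M}.

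First, the representables $\y_{\mB}(b)$ are cofibrant by Yoneda and the cofibrancy of the homs. They are also fibrant, since $\mB$ is locally fibrant and projective fibrations are objectwise. So $\y_{\mB}u$ already lands in $\widetilde{\y(\mB)}$ and is an admissible choice of $\widetilde F$.

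Second, for cofibrant-fibrant objects,
$$\Ho\Psh_{\mV}(\mB)\bigl(\y_{\mB}(b),\y_{\mB}(b')\bigr)\cong\pi_0\underline{\Psh_{\mV}(\mB)}\bigl(\y_{\mB}(b),\y_{\mB}(b')\bigr)\cong(\Ho\mV)\bigl(\tu,\mB(b,b')\bigr)=\mB_{\pi_0}(b,b').$$
The first isomorphism uses the standard identification of homotopy classes with $\pi_0$ of derived mapping objects in a $\mV$-model category with cofibrant unit. The second is the enriched Yoneda lemma. These isomorphisms should be compatible with composition, so $b\mapsto\y_{\mB}(b)$ induces a fully faithful functor $\mB_{\pi_0}\to\Ho\Psh_{\mV}(\mB)$. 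Consequently $\y_{\mB}(ua)\cong\y_{\mB}(b)$ in $\Ho\Psh_{\mV}(\mB)$ if and only if $ua\cong b$ in $\mB_{\pi_0}$, which gives the equivalence for the choice $\widetilde F=\y_{\mB}u$.

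\textbf{Independence of $\widetilde F$.} \Cref{def:essentially-surjective-arrow-M} asks only for the existence of some inducing $\widetilde F$. So I must also check that any other $\widetilde F'$ inducing $u_!$ is objectwise isomorphic in $\Ho\Psh_{\mV}(\mB)$ to $\y_{\mB}u$. This follows from \Cref{lem:homotopical-Yoneda-comparison-M}: restricting $u_!\cong\overline{\widetilde F'}$ along $\y_{\mA}$ gives an isomorphism in $\Ho\mV\Fun(\mA,\Psh_{\mV}(\mB))$, and evaluation at each $a$ is homotopical.

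\textbf{Conclusion.} With both halves matched, \Cref{prop:vertical-equivalence-M} converts \emph{fully faithful and essentially surjective} into \emph{adjoint equivalence in $\mV\Cat^h$}, giving both directions at once.

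\textbf{Main obstacle.} The step I expect to need the most care is the identification of $\Ho\Psh_{\mV}(\mB)(\y b,\y b')$ with $\mB_{\pi_0}(b,b')$ together with its compatibility with composition. I would handle it by using cofibrancy of $\y b$ and fibrancy of $\y b'$ to compute homotopy classes via a cylinder $\tu\otimes I$ built from a cylinder on the unit. Under Yoneda, this reduces to homotopy classes $\tu\to\mB(b,b')$ in $\mV$, and $\mB(b,b')$ is fibrant, so these are exactly the morphisms of $\Ho\mV$.
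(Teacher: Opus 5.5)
Your proposal is correct and follows the paper's argument. The paper's proof is the same combination of \Cref{cor:fully-faithful-strict-M} with \Cref{prop:vertical-equivalence-M}. You have made explicit the step it leaves implicit: essential surjectivity of $u_!$ (for any inducing $\widetilde F$, in particular $\y_{\mB}u$) matches essential surjectivity of $u_{\pi_0}$ via $\Ho\Psh_{\mV}(\mB)\bigl(\y_{\mB}(b),\y_{\mB}(b')\bigr)\cong(\Ho\mV)\bigl(\tu,\mB(b,b')\bigr)$.

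One small correction: $\y_{\mB}(b)$ is cofibrant because the unit is cofibrant, not because the homs are, since it is the image of $\tu$ under the left Quillen functor $\mB(-,b)\otimes-$ left adjoint to $ev_b$. The derived version of that same Quillen adjunction also gives the displayed isomorphism directly, without cylinders.
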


\begin{proof}
Follows from \Cref{cor:fully-faithful-strict-M} and \Cref{prop:vertical-equivalence-M}.
\end{proof}

\section{Proof of the equivalence with the formal category theory of quasicategories}\label{sec:RV}
\subsection{The Riehl-Verity equipment}

An $\infty$-cosmos is a category strictly enriched in quasi-categories, equipped with a distinguished class of morphisms called \emph{isofibrations} and admitting certain limits \cite[1.2.1]{RV22}. The example of interest for us is the full simplicial subcategory $\Qc\subset \sSet$ spanned by the quasi-categories, with its cartesian closed structure.
\subsubsection{The homotopy $2$-category of an $\infty$-cosmos}

For any $\infty$-cosmos $\mK$, Riehl-Verity define its homotopy $2$-category $\hh\mK$ by changing the enrichment along the homotopy category functor $\h\colon\Qc\to\Cat$; see \cite[1.4.1]{RV22}. This $\hh\mK$ has the same objects as $\mK$, its $1$-cells are the vertices of the functor quasi-categories, and its $2$-cells are homotopy classes of $1$-simplexes in those functor quasi-categories.

\subsubsection{Modules in an $\infty$-cosmos}

\begin{definition}\label{def:twosidedisofibration}
A \emph{two-sided isofibration from $A$ to $B$} in an $\infty$-cosmos $\mK$ is a span $$A\xleftarrow{q}E\xrightarrow{p}B$$ for which the induced morphism $(q,p)\colon E\to A\times B$ is an isofibration.
\end{definition}

A \emph{two-sided fibration} is a two-sided isofibration which is cocartesian in the $A$-variable and cartesian in the $B$-variable, satisfying the compatibility specified in \cite[7.1.4]{RV22}.

\begin{definition}\label{def:module}\cite[7.4.1 and 7.4.2]{RV22} A \emph{module from $A$ to $B$} is a two-sided isofibration $$E\to A\times B$$ which is cocartesian on the left, cartesian on the right, and discrete as an object of $\mK/(A\times B)$. Equivalently, it is a two-sided fibration $A\xleftarrow{q}E\xrightarrow{p}B$ which is discrete as an object of $A\backslash\operatorname{Fib}(\mK)/B$.
\end{definition}

By \cite[12.2.3]{RV22}, an isofibration of quasi-categories is discrete in the slice if and only if all of its fibers are discrete objects of $\Qc$, which means that they are Kan complexes.

Riehl-Verity construct a virtual double category $\Mod(\mK)$ whose objects are those of $\mK$, whose vertical arrows are the functors in $\mK$, and whose horizontal arrows are modules \cite[8.1.14]{RV22}.

\begin{theorem}\cite[8.2.6] {RV22}\label{thm:virtual-equipment-modules} For every $\infty$-cosmos $\mK$, the virtual double category $\Mod(\mK)$ of modules is a virtual equipment.
\end{theorem}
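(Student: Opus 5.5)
This statement is quoted from Riehl--Verity \cite[8.2.6]{RV22}, and the paper invokes it as a black box. Still, I would reconstruct its proof as follows. A virtual equipment is a virtual double category with two extra properties. First, every object $A$ has a unit, i.e.\ a horizontal arrow $A\nrightarrow A$ through which nullary cells factor uniquely. Second, every niche $A\xrightarrow{f}C\overset{E}{\nrightarrow}D\xleftarrow{g}B$ admits a restriction $E(g,f)\colon A\nrightarrow B$ together with a cartesian cell. So the plan is to produce both constructions in $\Mod(\mK)$ and verify their universal properties. These universal properties are phrased via the multicells of \cite[8.1.14]{RV22}, which are homotopy classes of maps of spans out of composite fibred products of modules.

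For units I would take the arrow module $A^{\mathbf 2}\twoheadrightarrow A\times A$. It is a two-sided fibration that is discrete in the slice, by the results of \cite[Ch.~7]{RV22}. Its unit property is the Yoneda lemma for modules: a cell out of $A^{\mathbf 2}$ into any module is determined by its restriction along the identity section $A\to A^{\mathbf 2}$. Riehl--Verity prove this as the two-sided discrete Yoneda lemma, which I would invoke and then extend to multi-source cells by pulling back along the remaining factors.

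For restrictions I would form the strict pullback $E(g,f)=(f\times g)^*E\to A\times B$ in $\mK$. Pullback preserves isofibrations and discreteness in the slice, and it preserves two-sided cocartesian/cartesian fibrations, since these are defined by lifting properties stable under pullback. The canonical cell $E(g,f)\to E$ over $f\times g$ is then cartesian. A multicell into $E$ whose vertical boundary factors through $f,g$ corresponds to a map of spans into the pullback, and this correspondence is bijective on homotopy classes because pullback along isofibrations is a simplicial limit in the $\infty$-cosmos. Companions and conjoints then arise as restrictions of the units, $f_*=A^{\mathbf 2}(1,f)$ and $f^\vee=A^{\mathbf 2}(f,1)$.

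The main obstacle is the unit axiom, i.e.\ showing that arbitrary composable strings of modules interact correctly with the arrow module. This requires the fibred Yoneda lemma in full generality, uniformly in the parameter, and the homotopy-invariance of the notion of cell. I would handle it by first reducing to sliced $\infty$-cosmoi $\mK/(A\times B)$, where modules are discrete objects, and then applying the representability results of \cite[Ch.~8]{RV22} there.
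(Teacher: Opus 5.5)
\textbf{Verdict.} The paper itself gives no argument here; it only cites \cite[8.2.6]{RV22}. Your architecture is the standard one: units are $\mathrm{Hom}_A=A^{\mathbf 2}$, and restrictions are strict pullbacks $(f\times g)^*E$. The restriction half is fine as written. The strict pullback of an isofibration is a simplicial pullback, so it induces an isomorphism of fibred mapping Kan complexes and hence a bijection on cells. The gap is in the unit half, at the step you defer to your last paragraph.

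\textbf{Where the unit argument fails.} The unit axiom is not only about nullary cells. Take any string $E_1,\dots,E_k,E_{k+1},\dots,E_n$ passing through $A$, and put $S=E_1\times_{A_1}\cdots\times_{A_{k-1}}E_k$ and $T=E_{k+1}\times_{A_{k+1}}\cdots\times_{A_{n-1}}E_n$. For every module $F$ over $A_0\times A_n$, pulled back along the vertical boundary, restriction along the identity section must induce a bijection on components of
\[
\Fun_{A_0\times A_n}\bigl(S\times_A A^{\mathbf 2}\times_A T,\;F\bigr)\longrightarrow\Fun_{A_0\times A_n}\bigl(S\times_A T,\;F\bigr).
\]
The two-sided Yoneda lemma you invoke settles $n=0$. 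It works by transporting along the free arrow using the (co)cartesian structure of the \emph{target}. For $0<k<n$ that mechanism is unavailable. The arrow to be contracted lives in the interior object $A$, which $F$ over $A_0\times A_n$ does not see. Also, $F$ is not pulled back from a module over $A\times A$. So pulling back the $n=0$ statement along the remaining factors does not produce the general case. Working in the slice over the target's base does not address this either. Appealing to the representability results of Chapter~8 risks circularity, since they are formulated in terms of the structure you are constructing.

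\textbf{The missing idea.} What you need is the fibrational structure of the \emph{source} module adjacent to $A$; when $n\geq 1$ at least one of $E_k$, $E_{k+1}$ exists.
\begin{itemize}
\item Since $E_k\twoheadrightarrow A_{k-1}\times A$ is cartesian over $A$, the inclusion $E_k\hookrightarrow E_k\times_A A^{\mathbf 2}$ has a right adjoint $(e,\alpha)\mapsto\alpha^*e$. This adjunction is fibred over $A_{k-1}\times A$; it is the Chevalley-type characterisation of cartesian fibrations in \cite[Ch.~5 and~7]{RV22}.
\item Dually, since $E_{k+1}$ is cocartesian over $A$, the inclusion $E_{k+1}\hookrightarrow A^{\mathbf 2}\times_A E_{k+1}$ has a fibred left adjoint.
\end{itemize}
A fibred adjunction is preserved by base change along the remaining factors, because simplicial functors between slices preserve adjunctions. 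This gives an adjunction over $A_0\times A_n$ between $S\times_A T$ and $S\times_A A^{\mathbf 2}\times_A T$. Applying $\Fun_{A_0\times A_n}(-,F)$ with $F$ discrete yields an adjunction between Kan complexes. Such an adjunction is an equivalence, which gives the required bijection of cells. With this lemma replacing your last paragraph, and the two-sided Yoneda lemma covering $n=0$, the outline becomes a proof.

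\textbf{Minor points.} The unit condition requires every cell with an empty slot at $A$ to factor uniquely through $\iota_A$, not only nullary cells. Also, (co)cartesian fibrations in an $\infty$-cosmos are defined through the homotopy $2$-category rather than by lifting properties. Their stability under pullback is nonetheless a theorem in \cite{RV22}, which is all you use.
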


Specializing to $\mK=\Qc$, we now identify modules with maps of simplicial sets $B^{\mathrm{\op}}\times A \to \mS$ and prove that horizontal compositions exist in the sense of \cite[ 8.3.1]{RV22}, so that we obtain an equipment in the sense of definition \ref{def:woodequip}.

Let $\Cat_{\infty}$ denote the quasi-category obtained from the simplicial category $\Qc$ by applying the core functor to each hom quasi-category and then the homotopy coherent nerve to the resulting simplicial category. Let $\mS \subset\Cat_{\infty}$ be the full sub-quasi-category spanned by the Kan complexes. For quasi-categories $A$ and $B$, let $$\operatorname{Mod}(A,B)$$ denote the quasi-category presented by Stevenson's bivariant model structure on $\sSet_{/A\times B}$ \cite{Ste18}. Its fibrant objects are precisely his bifibrations, which are the modules of Riehl-Verity.

Stevenson proves that a map between bifibrations is a bivariant equivalence if and only if it is a weak homotopy equivalence on every fiber \cite[4.27]{Ste18}.

\begin{proposition}[]\label{prop:stevenson-modules}
For any quasi-categories $A$ and $B$, there is an equivalence, natural in both variables, of quasi-categories $$\operatorname{Mod}(A,B) \simeq \Fun(B^{\op}\times A,\mS).$$

\end{proposition}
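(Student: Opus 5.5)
The plan is to pass through straightening--unstraightening and reduce everything to fibers. First I will identify bivariant fibrations over $A\times B$ with certain fibrations over a single base. Stevenson's bifibrations $E\to A\times B$, cocartesian in $A$ and cartesian in $B$ with Kan fibers, should correspond to left fibrations over $B^{\op}\times A$. Concretely, I will compare the bivariant model structure on $\sSet_{/A\times B}$ with the covariant model structure on $\sSet_{/B^{\op}\times A}$, using the twisted-arrow/``reversal'' construction on the $B$-coordinate. The cleanest route I see is Lurie's correspondence between bifibrations and left fibrations over a product (HTT 2.4.7 together with Stevenson's comparison, \cite{Ste18}). Under it, a bifibration $E\to A\times B$ goes to a left fibration $\widetilde E\to B^{\op}\times A$ with the same fibers up to weak homotopy equivalence.

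Next I will check that this comparison is a Quillen equivalence. I will detect weak equivalences between fibrant objects fiberwise on both sides. For bifibrations this is \cite[4.27]{Ste18}. For left fibrations it is the standard fiberwise criterion for covariant equivalences. This yields an equivalence of underlying quasi-categories
\[
\operatorname{Mod}(A,B)\simeq \mathrm{LFib}(B^{\op}\times A),
\]
where the right-hand side is the quasi-category of the covariant model structure. I will then compose with the Joyal--Lurie straightening equivalence $\mathrm{LFib}(C)\simeq\Fun(C,\mS)$ for $C=B^{\op}\times A$, which is the one place where $\mS$ (the Kan complexes inside $\Cat_\infty$) enters.

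For naturality, I will show that both steps commute with pullback along functors $A'\to A$ and $B'\to B$, up to coherent equivalence. On the modules side pullback is right Quillen for both model structures, and straightening is natural in the base with respect to restriction. It therefore suffices to note that the reversal construction in the first step is itself compatible with base change; that is immediate if it is defined by a pullback-stable formula.

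The main obstacle I expect is the first step: choosing a bifibration $\leftrightarrow$ left fibration comparison that is a genuine simplicial Quillen equivalence, natural in $(A,B)$. Cartesianness in the $B$-variable must become covariance over $B^{\op}$, and the naive op of a simplicial set reverses orientation in the wrong coordinate. My first attempt will be to factor through $\Fun(A,\mathrm{RFib}(B))$: straighten in $A$ using the cocartesian straightening for the bifibration viewed over $A$ with fibers right fibrations over $B$, then straighten in $B$ contravariantly, and finally use currying $\Fun(A,\Fun(B^{\op},\mS))\simeq\Fun(B^{\op}\times A,\mS)$. Naturality then reduces to the naturality of the two one-variable straightenings.
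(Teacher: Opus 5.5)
Your proposal is correct and follows essentially the same route as the paper: identify the bivariant model structure on $\sSet_{/A\times B}$ with the covariant model structure on $\sSet_{/B^{\op}\times A}$ via Stevenson's comparison, which passes through Lurie's correspondences, and then apply straightening for left fibrations. The ``main obstacle'' you anticipate does not arise in the paper, because no single direct Quillen equivalence is needed: Stevenson's zigzag $\bigl(\sSet_{/B^{\op}\times A}\bigr)_{\mathrm{cov}}\simeq\operatorname{Corr}(A,B)\simeq\bigl(\sSet_{/A\times B}\bigr)_{\mathrm{biv}}$ of Quillen equivalences already identifies the underlying quasi-categories, and the paper imports naturality from the literature on two-variable fibrations instead of arguing it by hand as you do.
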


\begin{proof}
For quasi-categories $A$ and $B$, Stevenson constructs a zigzag of Quillen equivalences $$\bigl(\sSet_{/B^{\op}\times A}\bigr)_{\mathrm{cov}} \simeq_{} \operatorname{Corr}(A,B) \simeq_{} \bigl(\sSet_{/A\times B}\bigr)_{\mathrm{biv}};$$ see \cite[3.23 and 4.40]{Ste18}. Passing to the quasi-categories presented by these model categories shows that $\operatorname{Mod}(A,B)$ is the quasi-category presented by the covariant model structure on $\sSet_{/B^{\op}\times A}$. The straightening theorem for left fibrations (the dual of \cite[2.2.1.2]{HTT}) recognizes the latter as $\Fun(B^{\op}\times A,\mS)$. For naturality, see \Cref{rem:natural-modules-profunctors}.
\end{proof}
\begin{remark}\label{rem:natural-modules-profunctors}
The equivalences of \Cref{prop:stevenson-modules} may be chosen naturally in both variables, see \cite[6.18, 6.19 and A.2]{HHLN23}. 
\end{remark}

\subsubsection{Horizontal composition of modules}

The pullback of two composable modules need not itself be a module, see \cite[7.4.ii]{RV22}. It is, however, a two-sided fibration by \cite[7.2.6]{RV22}. The existence of horizontal composites will follow by reflecting two-sided fibrations into modules.

\begin{lemma}\label{lem:stevenson-module-reflection}
Let $T\to A\times C$ be a two-sided fibration in an $\infty$-cosmos. Suppose that for any such $T$ there is a module $\overline T\to A\times C$ and a map $$\eta_T\colon T\to\overline T$$ over $A\times C$, such that precomposition with $\eta_T$ induces a homotopy equivalence of Kan complexes $$\Map_{A\times C}(\overline T,H) \xrightarrow{\;\simeq\;} \Map_{A\times C}(T,H)$$ for every module $H\to A\times C$. Then the virtual equipment of modules admits all composites.
\end{lemma}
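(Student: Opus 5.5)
The plan is to construct, for any composable pair of modules $E\colon A\nrightarrow B$ and $F\colon B\nrightarrow C$, the candidate composite as the reflection $\overline{T}$ of the pullback $T\coloneqq E\times_B F\to A\times C$. We then check that $\overline T$, together with the evident cell, satisfies the universal property of a horizontal composite in the sense of \cite[8.3.1]{RV22}. By \cite[7.2.6]{RV22}, $T$ is a two-sided fibration from $A$ to $C$, so the hypothesis provides a module $\overline T$ and a map $\eta_T\colon T\to\overline T$ over $A\times C$. The composite cell $\mu\colon (E,F)\Rightarrow\overline T$ in $\Mod(\mK)$ is the cell corresponding to $\eta_T$. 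In Riehl--Verity's description, cells with source the sequence $(E,F)$ and target a module are exactly maps out of the pullback $E\times_B F$ over the appropriate product, up to homotopy.

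Next we verify that $\mu$ is universal. A cell with source $(E_1,\dots,E_{i-1},E,F,E_{i+2},\dots,E_n)$ and target $H$, with possibly nonidentity vertical boundary $f\colon A_0\to X$ and $g\colon A_n\to Y$, is by definition a homotopy class of maps over $A_0\times A_n$ from the iterated pullback $E_1\times_{A_1}\cdots\times_{A_{n-1}}E_n$ into the pullback $(f\times g)^*H$. Let $S$ and $S'$ denote the iterated pullbacks with the string $(E,F)$, respectively with $\overline T$ in its place. We must show that precomposition with $\mathrm{id}\times\eta_T\times\mathrm{id}\colon S\to S'$ is bijective on such homotopy classes. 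The key reduction is to express $S$ and $S'$ as pulled back from $T$ and $\overline T$ along the two-sided fibration $P\coloneqq E_1\times\cdots\times E_{i-1}$ on one side and $P'\coloneqq E_{i+2}\times\cdots\times E_n$ on the other. Maps out of them over $A_0\times A_n$ then correspond, by the adjunctions given by pullback and composition along $P,P'$, to maps $T\to \{P,\{P',(f\times g)^*H\}\}$ and $\overline T\to\{\ldots\}$ over $A\times C$. Here the right-hand sides are the right closures, namely the modules corresponding to $\Fun(P,-)$ and $\Fun(P',-)$ over the base. Once this is in place, the hypothesis applied to that module yields the equivalence $\Map_{A\times C}(\overline T,-)\simeq\Map_{A\times C}(T,-)$, and taking $\pi_0$ gives the bijection of cells. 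Unary and nullary cases (identity modules as units) are already handled by \Cref{thm:virtual-equipment-modules}, since it is a virtual equipment.

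The main obstacle is this middle step: showing that the relevant target, built from $H$ by "exponentiating" along the outer strings, is again a module over $A\times C$ (cocartesian on the left, cartesian on the right, discrete). Only then does the hypothesis, which is restricted to modules $H$, apply. I would first handle the simple case $n=2$ (only $E,F$, with vertical boundary $f,g$), where the target is just $(f\times g)^*H$. This is a module because restriction of modules along functors is a module \cite[7.4]{RV22}. For longer strings, I would try instead to avoid exponentials by inducting: compose adjacent pairs from the inside out. This is legitimate because once $\mu$ is universal against cells with target a module and arbitrary outer strings of length zero, the standard argument for virtual double categories (\cite[Ch.~8]{RV22}, or Cruttwell--Shulman) shows that in a virtual equipment universality against unary-framed cells with restricted targets implies full universality. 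The restrictions $H(f,g)$ exist because $\Mod(\mK)$ is a virtual equipment by \Cref{thm:virtual-equipment-modules}. So the general case reduces to the $n=2$ case just verified.
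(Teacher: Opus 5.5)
Your first step is sound. By \cite[7.2.6]{RV22}, $T=E\times_B F$ is a two-sided fibration, so $\eta_T$ gives a cell $\mu\colon(E,F)\Rightarrow\overline T$. Restrictions $(f\times g)^*H$ of modules are again modules, so taking $\pi_0$ of the hypothesised equivalence shows that $\mu$ is universal among cells whose source is \emph{exactly} $(E,F)$, with arbitrary vertical boundary. For comparison, the paper gives no argument here; it simply cites Riehl--Verity's Exercise~8.3.ii(ii).

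The gap is the passage to cells with a nonempty horizontal context $(P,E,F,P')$, which is what \cite[8.3.1]{RV22} requires. The principle you invoke does not exist: in a virtual equipment, universality against context-free cells does not imply full universality. Restrictions let you absorb the vertical boundary $f,g$, but they do nothing about the outer strings $P,P'$. A cell that is universal only for its exact source is merely weakly opcartesian, and such cells need not be opcartesian. Already for one-object virtual double categories (multicategories) this is the gap between weakly universal multimorphisms and Hermida's representability, where closure of universal arrows under composition is an extra axiom.

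Your proposed ``inside out'' induction is circular. To compare cells out of $(P,E,F)$ with cells out of $(P,\overline T)$, you need exactly the contextual universal property of $\mu$ that you are trying to prove.

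What is actually required is the step you flagged as the main obstacle and then bypassed. You must show that $P\times_A(-)\times_C P'$ carries $\eta_T$ to a map inducing equivalences on $\Map_{A_0\times A_n}(-,H')$ for every module $H'$. Equivalently, you must show that your right closures $\{P,\{P',H'\}\}$ exist and are modules over $A\times C$. This does not follow formally from the virtual-equipment structure of $\Mod(\mK)$. In the only case the paper uses, $\mK=\Qc$, it can be supplied through \Cref{prop:stevenson-modules}: there modules are $\mS$-valued profunctors, and the closures are computed by ends.
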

\begin{proof}
This is \cite[8.3.ii(ii)]{RV22}.
\end{proof}

\begin{corollary}
\label{thm:actual-equipment-of-modules}
The virtual equipment $\Mod(\Qc)$ admits all horizontal composites, and is therefore an equipment according to definition \ref{def:woodequip}.
\end{corollary}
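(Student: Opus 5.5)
The plan is to verify the hypothesis of \Cref{lem:stevenson-module-reflection} for $\mK=\Qc$, taking as $\overline T$ a fibrant replacement of $T$ in Stevenson's bivariant model structure on $\sSet_{/A\times C}$. First I would fix a two-sided fibration $T\to A\times C$ of quasi-categories, for instance the pullback $E\times_B F\to A\times C$ of composable modules $E\to A\times B$ and $F\to B\times C$, which is a two-sided fibration by \cite[7.2.6]{RV22}. The object $T$ is cofibrant in $(\sSet_{/A\times C})_{\mathrm{biv}}$, since the cofibrations there are the monomorphisms. I would then factor $T\to A\times C$ as a bivariant trivial cofibration $\eta_T\colon T\to\overline T$ followed by a bivariant fibration $\overline T\to A\times C$. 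The fibrant objects are exactly Stevenson's bifibrations, which are the modules of \Cref{def:module}, so $\overline T$ is a module.

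Next I would check the universal property. The bivariant model structure is simplicial: mapping spaces over $A\times C$ are computed by the simplicial enrichment of $\sSet_{/A\times C}$, and Stevenson shows it is compatible with the Kan--Quillen structure, so that $K\times(-)$ is left Quillen \cite{Ste18}. Granting this, for every module $H$, which is fibrant, precomposition with a trivial cofibration between cofibrant objects,
\[
\Map_{A\times C}(\overline T,H)\to\Map_{A\times C}(T,H),
\]
is a trivial fibration of Kan complexes, and in particular a homotopy equivalence. One point needs care: the mapping spaces in \Cref{lem:stevenson-module-reflection} are the Riehl--Verity ones, namely the underlying Kan complexes of the sliced functor quasi-categories $\Fun_{A\times C}(-,H)$. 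I would argue that for $H$ discrete over $A\times C$ these functor quasi-categories are already Kan complexes \cite[12.2.3]{RV22}. They should then agree with the simplicial mapping spaces of the bivariant structure, because the bivariant structure is a left Bousfield localization of the Joyal slice structure. A map that is a bivariant equivalence therefore induces equivalences on mapping spaces into bivariant-fibrant objects.

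With these two steps, \Cref{lem:stevenson-module-reflection} gives all horizontal composites in $\Mod(\Qc)$. The virtual equipment also has units, namely the arrow modules $A^{\mathbf 2}\to A\times A$, which are part of the virtual equipment structure of \Cref{thm:virtual-equipment-modules}. A virtual equipment with all composites and units is a pseudo double category with companions and conjoints. Its horizontal bicategory, together with the vertical homotopy $2$-category $\hh\Qc$, therefore gives an equipment in the sense of \Cref{def:woodequip}. Local full faithfulness follows from the companion correspondence recalled in \Cref{sec:formal-category-theory-equipments}.

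I expect the main obstacle to be the comparison of mapping spaces: identifying the Riehl--Verity simplicial $\Map_{A\times C}$ for the non-module source $T$ with the derived mapping space of Stevenson's bivariant structure. I would first try to reduce this to the enriched Joyal slice structure and to the statement that the bivariant structure is its enriched left Bousfield localization, citing \cite[4.27]{Ste18} for the fibrewise characterization of weak equivalences between fibrant objects.
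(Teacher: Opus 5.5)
Your proposal is correct and follows the paper's proof: take $\overline T$ to be a fibrant replacement of $T$ in Stevenson's bivariant model structure \cite[Theorem 4.19]{Ste18}, note that it is a module, and apply \Cref{lem:stevenson-module-reflection}. The paper settles the mapping-space comparison you flag as the main obstacle simply by appealing to Stevenson's definition of bivariant weak equivalences \cite[Definition 4.12]{Ste18}, so your simplicial-enrichment and Bousfield-localization argument is a more detailed justification of the same step rather than a different route.
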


\begin{proof}
\Cref{prop:stevenson-modules} and the fibrant replacement in Stevenson's bivariant model structure (\cite[Theorem 4.19]{Ste18}), together with its definition of the weak equivalences (\cite[Definition 4.12]{Ste18}), allow us to apply \Cref{lem:stevenson-module-reflection}.
\end{proof}

\subsection{An intermediate equipment of quasicategorical profunctors}
\begin{definition}\cite[\href{https://kerodon.net/tag/03MB}{03MB}]{Ker} A \emph{profunctor} $P\colon A\nrightarrow B$ from a quasi-category $A$ to $B$ is a map of simplicial sets $P\colon B^{\op}\times A\to\mS $.
\end{definition}

\begin{definition}
 Let $\mP(A)=\Fun(A^{\op},\mS)$. A left adjoint $L:\mP(A)\to\mP(B)$ will be called \emph{representable} if there exist a functor $f:A\to B$ and an equivalence $L\y_A\simeq\y_B f.$
\end{definition}

By the quasicategorical universal property of $\mP(A)$ \cite[5.1.5.6]{HTT}, restriction along $\y_A$ induces an equivalence $$\Fun^L(\mP(A),\mP(B)) \xrightarrow{\;\simeq\;} \Fun(A,\mP(B)) \cong \Fun(A \times B^{\op},\mS).$$
\begin{notation}\label{notation:overlinequasicat}
  We denote a chosen quasi-inverse to the above equivalence by $$\overline{(-)} \colon \Fun(A \times B^{\op},\mS) \to \Fun^L(\mP(A),\mP(B)).$$
\end{notation}
Under this equivalence, a \emph{representable} $L$, represented by $f \colon A \to B $, corresponds to the quasicategorical left Kan extension $\overline{\y_Bf}:\mP(A)\to\mP(B)$.

The functor $\overline{\y_Bf}$ is left adjoint to the restriction functor $f^*:\mP(B)\to\mP(A)$ \cite[4.3.3.7]{HTT}. Under currying, $\y_Bf$ is the profunctor $$(b,a) \mapsto \Map_B(b,f(a))$$ while $f^*\y_B$ corresponds to the profunctor from $B$ to $A$ given by $$(a,b) \mapsto \Map_B(f(a),b).$$

\begin{definition}
 Composition of left adjoints defines a $2$-category $q Pro$ whose objects are all small quasi-categories, and whose hom-categories are $$qPro(A,B) = \operatorname{h}\bigl(\Fun^L\bigl(\mP(A),\mP(B)\bigr)\bigr).$$ The identity $1$-cell of $A$ is $\id_{\mP(A)}$.
   
\end{definition}

 Let $Rep$ be the locally full sub-$2$-category of $qPro$ with the same objects and with the hom-categories spanned by the representable left adjoints.

\begin{proposition}
\label{prop:presheaf-equipment}
The inclusion $$Rep \to qPro$$ is an equipment in the sense of definition \ref{def:woodequip}.
\end{proposition}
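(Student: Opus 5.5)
We check the three conditions of \Cref{def:woodequip} for $Rep\to qPro$. The plan is to work at the level of quasi-categories of left adjoints and only pass to homotopy categories at the end, so that adjunctions and full faithfulness can be read off from the equivalence $\Fun^L(\mP(A),\mP(B))\simeq\Fun(A,\mP(B))$ of \cite[5.1.5.6]{HTT}, in the same spirit as \Cref{lem:homotopical-Yoneda-comparison-M} in the enriched setting.

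\emph{Conditions (i) and (ii).} Condition (i) is immediate, since $Rep$ and $qPro$ have the same objects. Condition (ii) holds because $Rep$ is by definition the \emph{locally full} sub-$2$-category on the representable left adjoints. The inclusion $Rep(A,B)\to qPro(A,B)$ is therefore fully faithful: it is an inclusion of full subcategories of $\h\Fun^L(\mP(A),\mP(B))$. We should also check that $Rep$ is really a sub-$2$-category, i.e.\ that representables are closed under composition and contain identities. For identities, $\id_{\mP(A)}\y_A=\y_A\id_A$. For composition, suppose $L\y_A\simeq \y_B f$ and $L'\y_B\simeq\y_C g$. Then
$$L'L\y_A\simeq L'\y_B f\simeq \y_C gf,$$
so $L'L$ is represented by $gf$.

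\emph{Condition (iii).} Let $L$ be represented by $f\colon A\to B$. By uniqueness of left adjoints, $L$ restricts to $\y_B f$ on $\y_A$ and agrees with $\overline{\y_Bf}$, which is left adjoint to $f^*\colon\mP(B)\to\mP(A)$ \cite[4.3.3.7]{HTT}. The candidate conjoint is $L^\vee:=\overline{f^*\y_B}$, the colimit-preserving extension of $b\mapsto \Map_B(f-,b)$. The key step is to show that $L^\vee\simeq f^*$ as functors $\mP(B)\to\mP(A)$. Restriction $f^*$ is computed pointwise, so it preserves all small colimits; it also agrees with $\overline{f^*\y_B}$ on representables. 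By the universal property of $\mP(B)$ as free cocompletion, the two functors are therefore equivalent. In particular $f^*$ is a left adjoint, hence a legitimate $1$-cell of $qPro$.

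The $\infty$-categorical adjunction $\overline{\y_Bf}\dashv f^*$ lives in the $(\infty,2)$-category of presentable quasi-categories and colimit-preserving functors. Its unit $\eta\colon\id\to f^*\overline{\y_Bf}$ and counit $\epsilon\colon\overline{\y_Bf}f^*\to\id$ are natural transformations between left adjoints. Passing to homotopy classes, these give $2$-cells in $qPro(A,A)$ and $qPro(B,B)$. The triangle identities hold up to homotopy, hence exactly in the homotopy categories $\h\Fun^L(\mP(-),\mP(-))$. This exhibits $L\dashv L^\vee$ in $qPro$, and transporting along $L\simeq\overline{\y_Bf}$ handles a general representable $L$.

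\emph{Main obstacle.} The delicate point is the passage from the $\infty$-categorical adjunction to a strict adjunction in the $2$-category $qPro$. Composition in $qPro$ is strictly associative on functors, but the unit and counit of \cite[4.3.3.7]{HTT} are only specified up to coherent homotopy, and the triangle identities hold only up to specified $2$-simplices. I would handle this using the fact that an adjunction of quasi-categories induces an adjunction in the homotopy $2$-category $\hh\Qc$ \cite{RV22}. Restricting that adjunction along the locally full inclusion of colimit-preserving functors gives the required adjunction in $qPro$. The only remaining check is that whiskering in $\hh\Qc$ is compatible with whiskering in $qPro$, which holds since both are computed by composition of functors followed by taking homotopy classes.
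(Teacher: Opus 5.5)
Your proposal is correct and follows the paper's argument. In both, the crux is that the right adjoint $f^*$ of $\overline{\y_Bf}$ preserves small colimits and is therefore itself a $1$-cell of $qPro$, with the adjunction supplied by \cite[4.3.3.7]{HTT}; the paper reaches this via the adjoint functor theorem \cite[5.5.2.9]{HTT}, while you identify $f^*\simeq\overline{f^*\y_B}$ via \cite[5.1.5.6]{HTT}. What you add (conditions (i)--(ii), closure of $Rep$ under composition, and the descent of the $\infty$-categorical adjunction to the strict $2$-category $qPro$ through the homotopy $2$-category and local fullness) only makes explicit what the paper leaves implicit.
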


\begin{proof}
 Suppose that $L:\mP(A)\to\mP(B)$ is representable, that is $L \simeq \overline{\y_Bf}$ for some $f:A\to B$. To check the equipment property we need to construct the required right adjoint $f^{\vee}$ to $\overline{\y_Bf}$ in $qPro$. It therefore suffices to show that the right adjoint $f^*$ of $\overline{\y_Bf}$ is an arrow of $qPro$. This holds since restriction preserves small colimits and therefore belongs to $qPro(B,A)$ by the quasicategorical adjoint functor theorem \cite[5.5.2.9]{HTT}.
\end{proof}

\begin{definition}
Let $\bE_{qP}$ be the double category associated to the equipment \ref{prop:presheaf-equipment}.  Its objects are small quasi-categories, its vertical arrows are representable left adjoints, and its horizontal arrows are arbitrary left adjoints between presheaf quasi-categories.  A square $$\begin{tikzcd} A \arrow[r,"P"] \arrow[d,"V"'] & B \arrow[d,"K"] \\ A' \arrow[r,"M"'] & B' \end{tikzcd}$$ is a morphism $K\circ P\to M\circ V$ in $\operatorname{h}\Fun^L(\mP(A),\mP(B')).$
\end{definition}

\begin{lemma}
\label{lem:kernel-square-correspondence}
Let $f:A\to A'$, $g:B\to B'$, $P:B^{\op}\times A\to\mS$, and $Q:(B')^{\op}\times A'\to\mS$.

There is a natural equivalence of mapping spaces: $$\Map_{\Fun^L(\mP(A),\mP(B'))} \bigl(\overline{\y_{B'} g} \circ \overline{ P},\overline{Q} \circ \overline{\y_{A'} f}\bigr) \simeq \Map_{\Fun(B^{\op}\times A,\mS)} \bigl(P,Q\circ(g^{\op}\times f)\bigr).$$ 
\end{lemma}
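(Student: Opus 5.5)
The plan is to reduce both sides to mapping spaces between functors out of $A$, using the universal property of $\mP(A)$ and then the adjunction $\overline{\y_{B'}g}\dashv g^*$. First, restriction along $\y_A$ is an equivalence $\Fun^L(\mP(A),\mP(B'))\simeq\Fun(A,\mP(B'))$, so it induces equivalences on mapping spaces. This turns the left-hand side into
\[
\Map_{\Fun(A,\mP(B'))}\bigl(\overline{\y_{B'}g}\circ\overline P\circ\y_A,\ \overline Q\circ\overline{\y_{A'}f}\circ\y_A\bigr).
\]
By the choice of quasi-inverse in \Cref{notation:overlinequasicat}, there are equivalences $\overline P\circ\y_A\simeq P^\flat$ and $\overline{\y_{A'}f}\circ\y_A\simeq\y_{A'}f$. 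Here $P^\flat\colon A\to\mP(B)$ denotes the curried form of $P$. Applying $\overline Q$ to the second equivalence gives $\overline Q\circ\y_{A'}\circ f\simeq Q^\flat\circ f$. So the target becomes $Q^\flat f$, which curries to $Q\circ(\id_{(B')^{\op}}\times f)$ as a functor $A\to\mP(B')$.

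Second, I treat the source. The functor $\overline{\y_{B'}g}$ is left adjoint to $g^*\colon\mP(B')\to\mP(B)$ by \cite[4.3.3.7]{HTT}. Postcomposition then gives an adjunction $\Fun(A,\mP(B))\rightleftarrows\Fun(A,\mP(B'))$, which is the quasicategorical analogue of \Cref{lem:diagram-derived-adjunction-M}. It yields a natural equivalence
\[
\Map\bigl(\overline{\y_{B'}g}\circ P^\flat,\ Q^\flat f\bigr)\simeq\Map_{\Fun(A,\mP(B))}\bigl(P^\flat,\ g^*\circ Q^\flat\circ f\bigr).
\]
Now $g^*\circ Q^\flat\circ f$ is the functor $a\mapsto Q(g(-),f(a))$, which corresponds under currying to $Q\circ(g^{\op}\times f)$. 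Currying $\Fun(A,\mP(B))\cong\Fun(B^{\op}\times A,\mS)$ is an isomorphism of simplicial sets, so composing all these steps gives the asserted equivalence.

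For naturality, each step is natural in $P$ and $Q$, and in $f,g$ up to coherent equivalence. The identifications $\overline P\y_A\simeq P^\flat$ are components of the unit/counit of the chosen equivalence, and the adjunction equivalence is natural in both arguments.

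I expect the main obstacle to be the quasicategorical adjunction-on-mapping-spaces step. Specifically, I need that postcomposing with an adjunction gives an adjunction of functor quasi-categories with a mapping-space equivalence. It is also delicate to identify $g^*\circ Q^\flat$ with precomposition of $Q$ by $g^{\op}$ strictly enough for the currying to match. I would handle this by citing that $\Fun(A,-)$ preserves adjunctions, since it is a simplicially enriched functor on the homotopy $2$-category, together with \cite[5.2.2]{HTT}. For the identification, note that $g^*$ is literally precomposition with $g^{\op}$ on $\Fun((B')^{\op},\mS)$.
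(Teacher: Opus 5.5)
Your proposal is correct and takes the same route as the paper. The paper's proof is a one-line appeal to exactly your ingredients: restriction along $\y_A$ via the universal property of presheaves \cite[5.1.5.6]{HTT}, the identification $\overline{\y_{A'}f}\,\y_A\simeq\y_{A'}f$, the adjunction $\overline{\y_{B'}g}\dashv g^*$ (postcomposed on functor quasi-categories), and currying; you have just written out the intermediate identifications, such as $g^*\circ Q^\flat\circ f$ with $Q\circ(g^{\op}\times f)$, that the paper leaves implicit.
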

\begin{proof}
Follows from the adjunction $\overline{\y_{B'}g} \dashv g^*$, the universal property of presheaves \cite[5.1.5.6]{HTT}, and the currying isomorphism, using $\overline{\y_{A'}f}\y_A\simeq\y_{A'}f$.
\end{proof}
\subsection{Comparison of the latter with the equipment of simplicial categories} Let $\bE_{\Delta}$ denote the double category associated to the equipment constructed in \ref{th:equipment-M} for $\mV = \sSet_{\mathrm{KQ}}$. We abbreviate the notation by writing $\Pro_{\sSet}= \Pro_{\Delta}$ and $\sSet\Cat^{h} =\Cat^{h}_{\Delta}$.

\begin{proposition}
\label{prop:simplicial-presheaf-comparison}
For small locally Kan simplicial categories $\mA$ and $\mB$, there is an equivalence of categories $$\Phi_{\mA,\mB}: \Pro_{\Delta}(\mA,\mB) \xrightarrow{\;\simeq\;}        \operatorname{h}\Fun^L \bigl(\mP(N^{\mathrm{hc}}\mA),\mP(N^{\mathrm{hc}}\mB)\bigr).$$
\end{proposition}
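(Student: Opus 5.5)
We reduce both sides to categories of bimodules and then compare those by rigidification. On the left, \Cref{lem:homotopical-Yoneda-comparison-M} gives a homotopy equivalence of relative categories between $\mV\Fun^{L}_{\mathrm{Quil}}\bigl(\Psh_{\Delta}(\mA),\Psh_{\Delta}(\mB)\bigr)$ with cofibrant-wise equivalences and $\Psh_{\Delta}(\mA^{\op}\otimes\mB)_{\mathrm{proj}}$ with objectwise weak equivalences. Passing to localizations, we obtain an equivalence
\[
\Pro_{\Delta}(\mA,\mB)\simeq \Ho\,\sSet\Fun(\mB^{\op}\times\mA,\sSet)_{\mathrm{proj}}.
\]
On the right, the universal property of presheaves \cite[5.1.5.6]{HTT} together with currying (\Cref{notation:overlinequasicat}) gives
\[
\operatorname{h}\Fun^L\bigl(\mP(N^{\mathrm{hc}}\mA),\mP(N^{\mathrm{hc}}\mB)\bigr)\simeq \operatorname{h}\Fun\bigl(N^{\mathrm{hc}}(\mB)^{\op}\times N^{\mathrm{hc}}\mA,\mS\bigr).
\]
It therefore suffices to produce an equivalence between these two bimodule categories and to define $\Phi_{\mA,\mB}$ as the composite.

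For the middle step we invoke Lurie's rigidification theorem \cite[4.2.4.4]{HTT}. For a simplicial category $\mC$ and a combinatorial simplicial model category $\mM$, there is an equivalence
\[
N\bigl((\mM^{\mC}_{\mathrm{proj}})^{\circ}\bigr)\simeq\Fun\bigl(N^{\mathrm{hc}}\mC,N(\mM^{\circ})\bigr).
\]
We apply it with $\mC=\mB^{\op}\times\mA$ and $\mM=\sSet_{\mathrm{KQ}}$, using $N(\sSet_{\mathrm{KQ}}^{\circ})\simeq\mS$. The theorem needs no cofibrancy of $\mC$, and $\mB^{\op}\times\mA$ is again locally Kan, so the hypotheses are met. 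Taking homotopy categories, the homotopy category of the quasi-category of fibrant-cofibrant objects of $\mM^{\mC}_{\mathrm{proj}}$ is $\Ho(\mM^{\mC}_{\mathrm{proj}})$.

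The main obstacle is identifying $N^{\mathrm{hc}}(\mB^{\op}\times\mA)$ with $N^{\mathrm{hc}}(\mB)^{\op}\times N^{\mathrm{hc}}(\mA)$. Both $N^{\mathrm{hc}}$ and the product commute as needed only up to Joyal equivalence: the comparison map $N^{\mathrm{hc}}(\mB^{\op}\times\mA)\to N^{\mathrm{hc}}(\mB^{\op})\times N^{\mathrm{hc}}(\mA)$ is a categorical equivalence because $N^{\mathrm{hc}}$ is a right Quillen equivalence between fibrant objects, hence preserves products up to equivalence. In addition $N^{\mathrm{hc}}(\mB^{\op})\cong N^{\mathrm{hc}}(\mB)^{\op}$, since $\mathfrak{C}[\Delta^n]^{\op}\cong\mathfrak{C}[\Delta^n]$ via reversal. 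Precomposition with a categorical equivalence induces an equivalence on $\Fun(-,\mS)$, and this settles the step.

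Finally, we should record that the composite $\Phi_{\mA,\mB}$ sends a representable Quillen profunctor $\overline{\y_{\mB}f}$ to $\overline{\y_{N^{\mathrm{hc}}\mB}N^{\mathrm{hc}}f}$. This follows from the compatibility of rigidification with the simplicial Yoneda embedding, and it will be needed afterwards to compare the vertical fragments, though it is not required for the bare equivalence of categories.
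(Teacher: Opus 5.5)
Your proof is correct and follows the paper's argument: the homotopical Yoneda lemma, Lurie's rigidification \cite[4.2.4.4]{HTT} and the universal property of presheaves \cite[5.1.5.6]{HTT}. The only difference is that you curry all the way to bimodules on $\mB^{\op}\times\mA$ and rigidify once with values in $\sSet$, whereas the paper rigidifies $\sSet$-functors $\mA\to\Psh_{\sSet}(\mB)$ with values in $\Psh_{\sSet}(\mB)_{\mathrm{proj}}$. Also, the step you call the main obstacle is immediate: $N^{\mathrm{hc}}$ is a right adjoint, so $N^{\mathrm{hc}}(\mB^{\op}\times\mA)\cong N^{\mathrm{hc}}(\mB^{\op})\times N^{\mathrm{hc}}(\mA)$ is an isomorphism of simplicial sets, not merely a categorical equivalence.
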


\begin{proof}
Lemma \ref{lem:homotopical-Yoneda-comparison-M}, specialized to simplicial sets with the Kan-Quillen model structure, gives $$\Pro_{\Delta}(\mA,\mB) \simeq \Ho\bigl( \sSet\Fun (\mA,\Psh_{\sSet}(\mB))_{\mathrm{proj}} \bigr).$$ Then, using \cite[4.2.4.4]{HTT}, we recognize the right-hand side as the category $\operatorname{h}\Fun(N^{\mathrm{hc}}\mA,\mP(N^{\mathrm{hc}}\mB)),$ and \cite[5.1.5.6]{HTT} ensures the latter is equivalent to $\operatorname{h}\Fun^L \bigl(\mP(N^{\mathrm{hc}}\mA),\mP(N^{\mathrm{hc}}\mB)\bigr).$ 
\end{proof}
At the level of connected components, $\Phi_{\mA,\mB}(F)$ is the functor obtained by applying the homotopy coherent nerve to $F\y_{\mA}$ and then left Kan extending along the quasicategorical Yoneda embedding; therefore it sends representable Quillen profunctors to representable quasicategorical profunctors. Finally, the factorization through Yoneda property defining representability is reflected by construction, once we know that $\Phi$ is an equivalence and therefore bijective on isomorphism classes of objects.
\begin{proposition}\label{prop:simplicial-equipment-comparison}
The equivalences of \Cref{prop:simplicial-presheaf-comparison} assemble into a morphism of equipments $$\Phi:\bE_{\Delta}\to\bE_{qP},$$ where $\Phi(\mA)=N^{\mathrm{hc}}\mA$.
\end{proposition}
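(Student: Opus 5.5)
We have to assemble the hom-wise equivalences $\Phi_{\mA,\mB}$ of \Cref{prop:simplicial-presheaf-comparison} into a double pseudofunctor that is pseudo in both directions. Since both $\bE_{\Delta}$ and $\bE_{qP}$ come from equipments in the sense of \Cref{def:woodequip} (\Cref{th:equipment-M} and \Cref{prop:presheaf-equipment}), it suffices to produce a pseudofunctor of bicategories $\Phi\colon \Pro_{\Delta}\to qPro$ on horizontal fragments that restricts to $\Cat^h_{\Delta}\to Rep$. Companions and conjoints are then preserved automatically. Squares in both double categories are $2$-cells $K\circ P\Rightarrow M\circ V$ in the horizontal bicategory, with vertical arrows included via the companion, which is the identity inclusion by \Cref{conv:companion}. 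Hence a pseudofunctor on horizontal fragments preserving the vertical sub-bicategory induces the double pseudofunctor, with vertical composition strict up to the same coherence cells.

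\textbf{Step 1: strictify through $\infty$-categories.} I would make $\Phi$ functorial using Lurie's comparison. Write $\mathbf{N}$ for the $\infty$-categorical localization. By \Cref{lem:homotopical-Yoneda-comparison-M}, a $1$-cell of $\Pro_{\Delta}$ is an enriched left Quillen functor $F\colon\Psh_{\sSet}(\mA)\to\Psh_{\sSet}(\mB)$. Restricting to cofibrant-fibrant objects and applying $N^{\mathrm{hc}}$ gives a functor between the underlying quasicategories. By \cite[4.2.4.4]{HTT}, these underlying quasicategories are $\mP(N^{\mathrm{hc}}\mA)$ and $\mP(N^{\mathrm{hc}}\mB)$. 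The induced map is the total left derived functor $\mathbf{L}F$, which preserves colimits because $F$ is left Quillen.

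Deriving is pseudofunctorial: $\mathbf{L}(F'F)\simeq \mathbf{L}F'\circ\mathbf{L}F$ canonically, since left Quillen functors preserve cofibrant objects. The unit cell $\mathbf{L}\id\simeq\id$ is similar. Cofibrant-wise equivalences go to invertible natural transformations, so the construction descends to the localizations $\Pro_{\Delta}(\mA,\mB)$. The associativity and unit coherence of these comparison cells follows from the universal property of derived functors, i.e.\ uniqueness of the comparison maps in the homotopy category.

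\textbf{Step 2: identify with $\Phi_{\mA,\mB}$.} Restricted along $\y$, $\mathbf{L}F$ is $N^{\mathrm{hc}}(F\y_{\mA})$ followed by the straightening identification. By \cite[5.1.5.6]{HTT}, the colimit-preserving functor $\mathbf{L}F$ is then equivalent to the left Kan extension of this restriction, i.e.\ to $\Phi_{\mA,\mB}(F)$ as described after \Cref{prop:simplicial-presheaf-comparison}. So the functor of Step 1 agrees hom-wise with the equivalences already constructed.

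\textbf{Step 3: vertical arrows.} A representable $F=\overline X$ with $X\colon\mA\to\widetilde{\y(\mB)}$ is sent to a left adjoint whose restriction to $\y$ lands in the essential image of the Yoneda embedding. Since $\mP(N^{\mathrm{hc}}\mB)$ has representables closed under equivalence, this factors as $\y f$ for some $f\colon N^{\mathrm{hc}}\mA\to N^{\mathrm{hc}}\mB$. Thus $\Phi$ restricts to $\Cat^h_{\Delta}\to Rep$, with $\Phi(\mA)=N^{\mathrm{hc}}\mA$ on objects.

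The main obstacle is the coherence in Steps 1 and 2. The composition comparison $\mathbf{L}F'\circ\mathbf{L}F\simeq\mathbf{L}(F'F)$ must be compatible with the Yoneda-restriction identification of \Cref{prop:simplicial-presheaf-comparison}, which passes through the non-canonical replacements $\overline{(-)}$. I would handle this by working throughout with derived functors on cofibrant-fibrant subcategories, which are functorial on the nose up to the canonical cells. Only at the end would I transport along \cite[4.2.4.4]{HTT}, whose naturality in the enriched category gives the required compatibility.
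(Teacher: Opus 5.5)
Your proposal is correct in outline, but it gets pseudofunctoriality by a different mechanism than the paper.

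\textbf{The paper's route.} After the same reduction to the horizontal fragment, the paper obtains the composition comparison directly from \cite[5.1.5.6]{HTT}. Both $\Phi(GF)$ and $\Phi(G)\Phi(F)$ are left Kan extensions, along the quasicategorical Yoneda embedding, of the same diagram $N^{\mathrm{hc}}(GF\y_{\mA})$. The coherence isomorphisms then come from the uniqueness in that universal property.

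\textbf{Your route.} You identify $\Phi_{\mA,\mB}(F)$ with the $\infty$-categorical left derived functor $\mathbf{L}F$. Composition is then handled by the strict equality of restrictions of left Quillen functors, and you transport along \cite[4.2.4.4]{HTT} and \cite[5.1.5.6]{HTT} only at the end. This is longer. It also needs the extra fact that the simplicial Yoneda embedding corresponds to the quasicategorical one under \cite[4.2.4.4]{HTT}. In exchange, it makes explicit a step the paper's one-line argument leaves implicit. To see that $\Phi(G)\Phi(F)$ restricted along $\y$ is $N^{\mathrm{hc}}(GF\y_{\mA})$, one needs $\Phi(G)$ to compute $G$ on the cofibrant, generally non-representable, presheaves $F\y_{\mA}(a)$. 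That is exactly your identification $\Phi(G)\simeq\mathbf{L}G$.

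\textbf{Two points to tighten.}

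First, a left Quillen functor $F$ does not preserve fibrant objects. So ``restricting to cofibrant-fibrant objects and applying $N^{\mathrm{hc}}$'' does not literally give a functor between the homotopy coherent nerves of the cofibrant-fibrant parts. For the same reason, derived functors on cofibrant-fibrant subcategories are not ``functorial on the nose'': you must insert fibrant replacements $R$, and $RF'RF$ and $RF'F$ are only related by a natural weak equivalence. The strict functoriality you want lives on the cofibrant subcategories, where $(F'F)|_{\mathrm{c}}=F'|_{\mathrm{c}}\circ F|_{\mathrm{c}}$ holds on the nose between homotopical functors. Pass from there to the $\infty$-categorical localizations $\Psh_{\sSet}(\mA)^{\mathrm{c}}[W^{-1}]$, which are equivalent to the homotopy coherent nerves of the cofibrant-fibrant parts.

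Second, the coherence equations are equations between $2$-cells in $\mathrm{h}\Fun^L\bigl(\mP(N^{\mathrm{hc}}\mA),\mP(N^{\mathrm{hc}}\mB)\bigr)$. This is finer than natural transformations between functors of homotopy categories. So the classical uniqueness of comparison maps between derived functors on $\Ho$ does not settle them. The right justification is the $\infty$-categorical universal property of localization. Restriction along $\Psh_{\sSet}(\mA)^{\mathrm{c}}\to\Psh_{\sSet}(\mA)^{\mathrm{c}}[W^{-1}]$ is fully faithful on functor quasicategories, hence on their homotopy categories. Under it, every required equality reduces to the strict ones on cofibrant objects.
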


\begin{proof}
In order to exhibit a morphism of equipments it suffices to check pseudofunctoriality on the horizontal fragment. The latter is ensured by the quasicategorical universal property of the presheaf construction. In fact it suffices to note that $\Phi(GF)$ and $\Phi(G)\Phi(F)$ are the left Kan extensions of the same diagram given by the homotopy coherent nerve applied to $GF\y_{\mA}$. Therefore we obtain coherent isomorphisms giving pseudofunctoriality.
\end{proof}
\begin{proposition}
\label{prop:Phi-Delta-Verdugo-conditions}
The comparison $\Phi_{}$ satisfies $(w1)$, $(w2)$, $(w3')$, and $(w4)$ of \ref{def:equipequivalence}.
\end{proposition}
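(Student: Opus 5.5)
We check the four conditions in turn, reducing each to \Cref{prop:simplicial-presheaf-comparison} or to classical results comparing simplicial categories and quasi-categories.

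\emph{Condition $(w1)$.} Every small quasi-category $C$ is equivalent to $N^{\mathrm{hc}}\mathfrak C[C]$, and after a hom-wise fibrant replacement (for instance $\mathrm{Ex}^\infty$ or $\mathrm{Sing}|{-}|$ applied to the hom-objects) we may take the simplicial category to be locally Kan. Since $\mathrm{Ex}^\infty$ preserves finite products, it preserves composition, so this replacement is again a simplicial category. By the Quillen equivalence between the Bergner and Joyal model structures \cite[2.2.5.1]{HTT}, the unit $C\to N^{\mathrm{hc}}\mathfrak C[C]$ is a categorical equivalence. An equivalence of quasi-categories $f\colon C\to C'$ yields an equivalence in the vertical $2$-category $Rep$: the representable left adjoints $\overline{\y f}$ and $\overline{\y g}$ for a quasi-inverse $g$ compose to functors equivalent to the identity. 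Hence every object of $\bE_{qP}$ is vertically equivalent to some $\Phi(\mA)$.

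\emph{Condition $(w3')$.} This is exactly essential surjectivity of $\Phi_{\mA,\mB}$, which holds because $\Phi_{\mA,\mB}$ is an equivalence of categories by \Cref{prop:simplicial-presheaf-comparison}.

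\emph{Condition $(w2)$.} A vertical arrow $L\colon \mP(N^{\mathrm{hc}}\mA)\to\mP(N^{\mathrm{hc}}\mB)$ in $Rep$ is in particular a horizontal arrow, so by $(w3')$ it is isomorphic to $\Phi(F)$ for some $F\in\Pro_{\Delta}(\mA,\mB)$. We must show that $F$ is representable. The restriction $F\y_{\mA}$ corresponds under \cite[4.2.4.4]{HTT} to $L\y\simeq \y f$. Each value $F\y_{\mA}(a)$ is therefore isomorphic in $\Ho\Psh_{\sSet}(\mB)\simeq \operatorname{h}\mP(N^{\mathrm{hc}}\mB)$ to a representable, because the straightening equivalence sends $\y_{\mB}(b)$ to the quasi-categorical representable $\y(b)$. \Cref{lem:representable-normal-form-M} then replaces $F\y_{\mA}$, after cofibrant replacement, by a functor valued in $\widetilde{\y(\mB)}$, which exhibits $F$ as representable. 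I expect this step to be the main obstacle: one must make sure that the identification of \cite[4.2.4.4]{HTT} is compatible with the two Yoneda embeddings up to natural equivalence, so that ``representable up to homotopy'' means the same thing on both sides. I would settle this by citing the comparison of the simplicial Yoneda embedding with the quasi-categorical one, \cite[5.1.3]{HTT}.

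\emph{Condition $(w4)$.} A square with given boundary is a $2$-cell $K\circ P\to M\circ V$ in a hom-category. On the simplicial side these $2$-cells are morphisms in $\Pro_{\Delta}(\mA,\mB')$; on the other side they are morphisms in $\operatorname{h}\Fun^L$. By \Cref{prop:simplicial-equipment-comparison}, $\Phi$ is pseudofunctorial with invertible comparison cells, so $\Phi(K\circ P)\cong\Phi K\circ\Phi P$, and similarly for $M\circ V$. The action of $\Phi$ on squares with this boundary is therefore conjugate, via these isomorphisms, to the action of $\Phi_{\mA,\mB'}$ on hom-sets. That action is bijective because $\Phi_{\mA,\mB'}$ is fully faithful. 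This bijectivity is also consistent with \Cref{lem:kernel-square-correspondence}.
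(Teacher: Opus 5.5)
Your proposal is correct and follows the paper's own route. You get $(w1)$ from Joyal's Quillen equivalence \cite[2.2.5.1]{HTT}, and $(w2)$, $(w3')$, $(w4)$ from the hom-wise equivalences of \Cref{prop:simplicial-presheaf-comparison} together with the fact that representability is reflected, which you spell out in more detail than the paper does. One small precision in $(w1)$: the Quillen equivalence makes the derived unit $C\to N^{\mathrm{hc}}(\mathrm{Ex}^\infty\mathfrak C[C])$, into the locally Kan hom-wise replacement, a categorical equivalence; it says nothing about the underived unit $C\to N^{\mathrm{hc}}\mathfrak C[C]$, which is what you literally wrote.
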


\begin{proof}
For $(w1)$, it is a Theorem of Joyal that for any quasi-category $A$ there is a locally Kan simplicial category $\mA$ and a Joyal weak equivalence $A\simeq N^{\mathrm{hc}}\mA$ \cite[2.2.5.1]{HTT}, which gives a vertical equivalence in $\bE_{qP}$.
The conditions $(w2)$ and $(w3')$, as well as $(w4)$ all hold by construction of $\Phi$ and \Cref{prop:simplicial-presheaf-comparison}.
\end{proof}

\subsection{Comparison with the Riehl--Verity equipment}
Let $\bE_{RV}$ be the equipment of modules (\ref{thm:actual-equipment-of-modules}) associated to Riehl-Verity's $\infty$-cosmos of quasicategories. We construct a morphism $$\Psi \colon \bE_{RV} \to \bE_{qP}$$ which is the identity on objects and on arrows is defined in the following way. For a module $E:A\nrightarrow B$ we take $P_E:B^{\op}\times A\to\mS$ the corresponding profunctor (\ref{prop:stevenson-modules}) and we set $$\Psi(E):=\overline{P_E}: \mP(A)\to\mP(B),$$ following notation \ref{notation:overlinequasicat}. In particular, on a vertical arrow it is $\Psi(f)=\overline{\y f}$; on squares, we use \Cref{rem:natural-modules-profunctors,lem:kernel-square-correspondence}.

\begin{lemma}\label{lem:module-comparison-coherence}
The assignment above defines a morphism of equipments.
\end{lemma}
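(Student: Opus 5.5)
Since $\bE_{RV}$ is an equipment by \Cref{thm:actual-equipment-of-modules} and $\bE_{qP}$ by \Cref{prop:presheaf-equipment}, I would check three things for $\Psi$: it acts functorially on vertical arrows, it is a pseudofunctor on the horizontal fragment, and it respects vertical composition of squares strictly and horizontal composition of squares up to the coherence isomorphisms. As recalled in the proof of \Cref{prop:simplicial-equipment-comparison}, a morphism of equipments is determined by its horizontal fragment together with compatibility with companions. So I would first note that a vertical arrow $f$ of $\bE_{RV}$ has companion the module $\mathrm{Hom}_B(B,f)$, i.e.\ the comma two-sided fibration. Under \Cref{prop:stevenson-modules} its profunctor is $(b,a)\mapsto \Map_B(b,fa)$, which is $\y_B f$ after currying. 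Hence $\Psi(f_*)\simeq \overline{\y_B f}=\Psi(f)$, and vertical functoriality reduces to $\overline{\y_C gf}\simeq\overline{\y_C g}\circ\overline{\y_B f}$, which holds because both restrict along $\y_A$ to $\y_C gf$.

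Next I would define the action on squares. A square in $\Mod(\Qc)$ with boundary $E\colon A\nrightarrow B$, $F\colon A'\nrightarrow B'$, $f\colon A\to A'$, $g\colon B\to B'$ is, by \cite[8.1]{RV22}, a map of spans $E\to F$ over $f\times g$. Equivalently, it is a map $E\to (f\times g)^*F$ in the homotopy category of modules from $A$ to $B$. Transporting it through the natural equivalence of \Cref{prop:stevenson-modules} and \Cref{rem:natural-modules-profunctors} gives a homotopy class $P_E\to P_F\circ(g^{\op}\times f)$. \Cref{lem:kernel-square-correspondence} then gives a $2$-cell $\overline{\y_{B'}g}\circ\overline{P_E}\to\overline{P_F}\circ\overline{\y_{A'}f}$, which is a square of $\bE_{qP}$. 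Naturality of both equivalences in all variables gives vertical functoriality on squares and identity preservation. As a by-product, this shows $\Psi$ is bijective on squares with fixed boundary, which will be needed later.

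The main obstacle is horizontal pseudofunctoriality. I need a coherent isomorphism $\Psi(F\circ E)\simeq\Psi(F)\circ\Psi(E)$, where $F\circ E$ is the reflection $\overline{E\times_B F}$ from \Cref{lem:stevenson-module-reflection}. My first approach would identify this reflection with bivariant fibrant replacement of the pullback, as in \Cref{thm:actual-equipment-of-modules}. The profunctor of $E\times_B F$ is computed fiberwise, and its fibrant replacement presents the coend $\int^{b}P_F(c,b)\times P_E(b,a)$, i.e.\ the colimit over the two-sided fibration. That coend is exactly the profunctor of $\overline{P_F}\circ\overline{P_E}$ restricted along $\y_A$, by the universal property \cite[5.1.5.6]{HTT}.

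To avoid heavy computation, I would instead verify the universal property. Maps out of $\overline{E\times_B F}$ into a module $H$ are maps of two-sided fibrations $E\times_BF\to H$, by \Cref{lem:stevenson-module-reflection}. These in turn correspond, via \Cref{lem:kernel-square-correspondence}, to the natural cells $P_F\circ_B P_E\to P_H$, meaning to maps $\overline{P_F}\circ\overline{P_E}\to\overline{P_H}$. Yoneda in $\operatorname{h}\Fun^L$ then gives the isomorphism. Its naturality, and the associativity and unit coherences, follow because both sides are determined by the same universal property, up to unique isomorphism in the homotopy categories.
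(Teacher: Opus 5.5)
Your overall strategy matches the paper's: reduce to horizontal pseudofunctoriality, identify the composite module with the coend of the associated profunctors, and conclude by the universal property of $\mP(-)$. The gap is in the step you actually rely on. In your last paragraph you claim that maps of two-sided fibrations $E\times_BF\to H$ over $A\times C$ correspond, via \Cref{lem:kernel-square-correspondence}, to maps $P_F\circ_BP_E\to P_H$. That lemma cannot give this. It only compares unary squares with representable vertical boundary, i.e.\ maps $\overline{\y_{B'}g}\circ\overline{P}\to\overline{Q}\circ\overline{\y_{A'}f}$ with maps $P\to Q\circ(g^{\op}\times f)$, and it works entirely on the presheaf side. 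It says nothing about the span $E\times_BF$, which is a two-sided fibration but not a module, so \Cref{prop:stevenson-modules} does not even attach a profunctor to it. The coend has to come from an actual computation. You must show that $E\times_BF$, or equivalently its reflection $\overline{E\times_BF}$, corresponds under Stevenson's zigzag, naturally in $(a,c)$, to $(c,a)\mapsto\int^bP_F(c,b)\times P_E(b,a)$. This is precisely the input the paper takes from \cite[5.2.1(3)]{AF20}. Your universal-property argument presupposes it rather than avoiding it, and your first sketch only asserts it (``its fibrant replacement presents the coend'').

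To close the gap, either cite that result or argue fiberwise, with naturality in $(a,c)$:
\begin{itemize}
\item the reflection of a two-sided fibration into modules is fiberwise groupoidification;
\item the fiber of $E\times_BF$ over $(a,c)$ is $E_a\times_BF^c$, where $p\colon E_a\to B$ is the right fibration classifying $P_E(-,a)$ and $F^c\to B$ is the left fibration classifying $P_F(c,-)$;
\item the weak homotopy type of $E_a\times_BF^c$ is $\colim_{E_a}P_F(c,p(-))\simeq\int^bP_E(b,a)\times P_F(c,b)$.
\end{itemize}
Associativity and unit coherence likewise do not follow from the binary universal property alone. You need the same identification for threefold pullbacks $E\times_BF\times_CG$, i.e.\ iterated coends, together with a comparison of the two bracketings; this is what the Fubini theorem for coends \cite[06A2]{Ker} supplies in the paper's proof. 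The rest of your proposal is fine: vertical functoriality through companions, and the action on squares through \Cref{prop:stevenson-modules} and \Cref{lem:kernel-square-correspondence}.
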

\begin{proof}
As in the other case, it is sufficient to check pseudofunctoriality on the horizontal fragment. The composition of modules in $\mathbb{E}_{\mathrm{RV}}$ is given by taking their composite as two-sided fibrations and then reflecting the result onto modules. In quasicategories this means taking the coend of the associated profunctors, by \cite[5.2.1(3)]{AF20}. The quasicategorical Fubini's theorem for coends \cite[\href{https://kerodon.net/tag/06A2}{06A2}]{Ker} and the universal property of the presheaf construction then ensure the pseudofunctoriality of our definition.
\end{proof}
\begin{proposition}
\label{prop:Phi-RV-Verdugo-conditions}
The comparison $\Psi$ satisfies $(w1)$, $(w2)$, $(w3')$, and $(w4)$ of \ref{def:equipequivalence}.
\end{proposition}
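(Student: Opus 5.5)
We check the four conditions of \Cref{def:equipequivalence} one by one, taking advantage of the fact that $\Psi$ is the identity on objects and is built from equivalences that are already available.

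\emph{Condition $(w1)$.} This is immediate: every object of $\bE_{qP}$ is a small quasi-category $A$, and $A=\Psi(A)$.

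\emph{Condition $(w3')$.} A horizontal arrow $A\nrightarrow B$ of $\bE_{qP}$ is a left adjoint $L\colon\mP(A)\to\mP(B)$. By \cite[5.1.5.6]{HTT} we have $L\simeq\overline{L\y_A}$, and $L\y_A$ corresponds to a profunctor $P\colon B^{\op}\times A\to\mS$. By \Cref{prop:stevenson-modules}, which is essentially surjective, $P\simeq P_E$ for some module $E$. Hence $L\cong\Psi(E)$ in the hom-category $\h\Fun^L(\mP(A),\mP(B))$.

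\emph{Condition $(w2)$.} A vertical arrow is a representable $L$ with $L\y_A\simeq\y_Bf$. Then $L\simeq\overline{\y_Bf}=\Psi(f)$, and this equivalence is an isomorphism in the vertical fragment because $Rep$ is locally full in $qPro$.

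\emph{Condition $(w4)$.} Fix a boundary consisting of $f\colon A\to A'$, $g\colon B\to B'$, and modules $E\colon A\nrightarrow B$, $E'\colon A'\nrightarrow B'$. A square in $\bE_{RV}$ with this boundary is a map of modules $E\to (f\times g)^*E'$ over $A\times B$, taken up to homotopy. This is the set $\pi_0$ of the mapping space in $\operatorname{Mod}(A,B)$. Applying \Cref{prop:stevenson-modules} and its naturality (\Cref{rem:natural-modules-profunctors}), under which restriction $(f\times g)^*$ corresponds to precomposition $Q\circ(g^{\op}\times f)$, this set is identified with $\pi_0\Map_{\Fun(B^{\op}\times A,\mS)}(P_E,P_{E'}\circ(g^{\op}\times f))$. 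By \Cref{lem:kernel-square-correspondence}, that set is in turn $\pi_0$ of the mapping space of squares in $\bE_{qP}$ with boundary $\Psi(f),\Psi(g),\Psi(E),\Psi(E')$. It then remains to check that the composite bijection is exactly the action of $\Psi$ on squares. This holds by definition, since $\Psi$ was defined on squares through precisely these two identifications.

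\emph{Main obstacle.} The delicate step is $(w4)$ in the following respect. Riehl-Verity squares are defined as $2$-cells in the homotopy $2$-category of the $\infty$-cosmos, that is, homotopy classes of maps of spans. We must identify these with $\pi_0$ of the Stevenson bivariant mapping spaces between fibrant objects, using that modules are bifibrant in $\sSet_{/A\times B}$. We also need to verify that restriction of modules along $f\times g$ is modelled by pullback, which is right Quillen for the bivariant structures. Naturality is then supplied by \cite{HHLN23}.
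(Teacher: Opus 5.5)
Your proposal is correct and follows the same route as the paper. Conditions $(w2)$ and $(w3')$ come from $L\simeq\overline{L\y_A}$ together with the essential surjectivity of Stevenson's equivalence (\Cref{prop:stevenson-modules}), and $(w4)$ comes from \Cref{lem:kernel-square-correspondence} with the naturality of \Cref{rem:natural-modules-profunctors}, which is exactly how $\Psi$ is defined on squares. You are more explicit than the paper: it leaves $(w1)$ implicit (trivial, since $\Psi$ is the identity on objects), and it does not spell out the identification of Riehl--Verity cells with $\pi_0$ of mapping spaces of modules that you flag as the delicate point.
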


\begin{proof}
Let $L:\mP(A)\to\mP(B)$ be a left adjoint; then by \Cref{prop:stevenson-modules} there is a module with associated profunctor given by (the currying of) $L\y_A$; the equivalence $\overline{L\y_A} \simeq L$ thus gives both $(w3')$ and $(w2)$, whereas $(w4)$ follows by \Cref{lem:kernel-square-correspondence}.
\end{proof}

\subsection{Comparison with topological categories}
Let $\mathsf{Top}$ denote compactly generated weakly Hausdorff spaces with the Quillen-Serre model structure, and let $\bE_{\mathsf{Top}}$ be the equipment constructed in \Cref{th:equipment-M} for this base.

\begin{proposition}\label{prop:simplicial-topological-equipment}
Geometric realization induces an equipment biequivalence $$|-|\colon \bE_{\Delta}\to\bE_{\mathsf{Top}}.$$
\end{proposition}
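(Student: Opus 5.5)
Geometric realization $|-|\colon \sSet_{\mathrm{KQ}}\to\mathsf{Top}$ is strong symmetric monoidal; for compactly generated weak Hausdorff spaces, $|K\times L|\cong|K|\times|L|$. It is left Quillen and part of a Quillen equivalence with right adjoint $\mathrm{Sing}$. The plan is to build the comparison at the level of hom-categories, as was done for $\Phi$ in \Cref{prop:simplicial-presheaf-comparison}, and then to check $(w1)$–$(w4)$ of \Cref{def:equipequivalence}.

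\emph{Objects.} A locally Kan simplicial category $\mA$ is sent to $|\mA|$, obtained by applying $|-|$ homwise. Its homs are CW complexes, hence cofibrant; every space is fibrant in Quillen-Serre. So $|\mA|$ is an object of $\bE_{\mathsf{Top}}$.

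\emph{Hom-categories.} By \Cref{lem:homotopical-Yoneda-comparison-M},
$$\Pro_{\Delta}(\mA,\mB)\simeq\Ho\bigl(\sSet\Fun(\mA^{\op}\otimes\mB,\sSet)_{\mathrm{proj}}\bigr),$$
and similarly for $\mathsf{Top}$. I would define the comparison as the composite
$$\sSet\Fun(\mA^{\op}\otimes\mB,\sSet)\xrightarrow{|-|}\mathsf{Top}\Fun(|\mA|^{\op}\times|\mB|,\mathsf{Top}).$$
The key input is the standard change-of-enrichment fact: for a monoidal Quillen equivalence $\mV\rightleftarrows\mN$ and a locally cofibrant $\mC$, the induced adjunction
$$\mV\Fun(\mC,\mV)_{\mathrm{proj}}\rightleftarrows\mN\Fun(|\mC|,\mN)_{\mathrm{proj}}$$
(left Kan extension along $\mC\to\mathrm{Sing}|\mC|$ composed with realization, right adjoint $\mathrm{Sing}$ followed by restriction) is a Quillen equivalence. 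Here one uses that the unit $\mC\to\mathrm{Sing}|\mC|$ is a Dwyer-Kan equivalence, and that projective model structures are invariant under Dwyer-Kan equivalences between locally cofibrant categories; the latter follows from \Cref{lem:cofibrant-weight-homotopy-invariance} and \Cref{lem:evaluation-both-quillen-M}. This step gives $(w3')$ and $(w4)$ at once.

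\emph{Compatibility and the equipment morphism.} Pseudofunctoriality of horizontal composition comes from the coend formula $\overline Y\circ\overline X\cong\overline{\overline Y\circ X}$ in \Cref{lem:representables-closed-M}: realization commutes with coends, and preserves cofibrant replacements up to weak equivalence because it is strong monoidal and left Quillen. Representability is preserved since $|\y_{\mB}(b)|=\y_{|\mB|}(b)$. Conversely, a Quillen profunctor into $|\mB|$ that is homotopy-representable corresponds under the Quillen equivalence to a homotopy-representable one, giving $(w2)$.

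\emph{Objects up to equivalence and the main obstacle.} For $(w1)$, given a topological category $\mC$, the category $\mathrm{Sing}\,\mC$ is locally Kan and the counit $|\mathrm{Sing}\,\mC|\to\mC$ is a strict functor that is the identity on objects and a homwise weak equivalence. By \Cref{cor:DK-equivalence-vertical-M} it is therefore an adjoint equivalence in the vertical fragment. I expect the main obstacle to be the rigorous Quillen equivalence of projective presheaf categories over $\mA$ versus over $|\mA|$. The plan is to reduce it, via the derived unit and counit and detection on representables (\Cref{lem:detection-on-representables-M}), to the fact that $K\to\mathrm{Sing}|K|$ is a weak equivalence for all $K$.
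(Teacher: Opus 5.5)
Your proposal is correct and follows essentially the same route as the paper. Both arguments use homwise realization and the Quillen equivalence between the projective presheaf categories over $\mC$ and over $|\mC|$, which gives $(w2)$, $(w3')$ and, by local full faithfulness, $(w4)$; pseudofunctoriality holds because realization commutes with the relevant coends; and $(w1)$ comes from the identity-on-objects local weak equivalence $|\mathrm{Sing}\,\mC|\to\mC$, which you rightly make precise through \Cref{cor:DK-equivalence-vertical-M}.

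Two cosmetic remarks. The bimodule category should read $\Psh_{\sSet}(\mA^{\op}\otimes\mB)=\sSet\Fun(\mA\otimes\mB^{\op},\sSet)$. The Quillen equivalence is immediate because the adjunction is objectwise $|-|\dashv\mathrm{Sing}$, with objectwise weak equivalences on both sides, so the detour through Dwyer--Kan invariance along $\mC\to\mathrm{Sing}|\mC|$ is unnecessary.
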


\begin{proof}
We let $|\mC|$ be the topological category obtained by applying the geometric realization functor locally to the simplicial mapping sets of $\mC$. We recall the classical fact that geometric realization is left adjoint to the singular functor, and as such is part of a Quillen equivalence. As always, we only need to define the morphism of equipments pseudo-functorially on the horizontal fragment. 

Let $H \colon \mA \nrightarrow \mB$ a horizontal arrow in $\mathbb{E}_{\Delta}$. Taking $\mC = \mA^{\mathrm{\op}} \times \mB$, we use that realization extends to a left Quillen equivalence $|-|\colon \Psh_{\sSet}(\mC) \to  \Psh_{\mathsf{Top}}(|\mC|)$, so that we can define $|H|=\Lan_{\y_{|\mA|}}|H\y_{\mA}|$, thus obtaining a pseudofunctor. We are left to show that it is an equivalence of equipments. The conditions $(w2),(w3')$ are met owing to the Quillen equivalence $\Psh_{\sSet}(\mC) \to  \Psh_{\mathsf{Top}}(|\mC|)$, and local full faithfulness gives $(w4)$. Finally, the map $|\operatorname{Sing} \mC |\to \mC$ is locally a weak homotopy equivalence and an identity on object, therefore we have also $(w1)$.

\end{proof}

\subsection{Model invariance}
\begin{theorem}
\label{thm:comparison-before-strictification}
There are equivalences of equipments (definition \ref{def:equipequivalence}) $$\mathbb{E}_{\mathsf{Top}}\xleftarrow{\;|-|\;} \bE_{\Delta} \xrightarrow{\;\Phi\;} \bE_{qP} \xleftarrow{\;\Psi\;} \bE_{\mathrm{RV}}$$

\end{theorem}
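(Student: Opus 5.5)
The theorem is an assembly of the three comparison results proved in this section. For each of the three arrows I will check that it is a morphism of equipments (a double pseudofunctor) and that it satisfies $(w1)$, $(w2)$, $(w3')$ and $(w4)$ of \Cref{def:equipequivalence}.

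\emph{The arrow $|-|\colon\bE_{\Delta}\to\mathbb E_{\mathsf{Top}}$.} This is \Cref{prop:simplicial-topological-equipment}. The one point worth re-checking is that $|-|$ lands in the objects of $\mathbb E_{\mathsf{Top}}$. Realizations of Kan complexes are cofibrant and fibrant in the Quillen-Serre structure, so $|\mC|$ is again locally fibrant-cofibrant. Moreover, $|-|$ sends representable Quillen profunctors to representable ones, because the derived functor of the Quillen equivalence $\Psh_{\sSet}(\mB)\to\Psh_{\mathsf{Top}}(|\mB|)$ sends $\y_{\mB}(b)$ to $\y_{|\mB|}(b)$.

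\emph{The arrow $\Phi\colon\bE_{\Delta}\to\bE_{qP}$.} \Cref{prop:simplicial-equipment-comparison} shows that $\Phi$ is a morphism of equipments, and \Cref{prop:Phi-Delta-Verdugo-conditions} gives the four conditions. The content behind $(w2)$ and $(w3')$ is that each $\Phi_{\mA,\mB}$ of \Cref{prop:simplicial-presheaf-comparison} is an equivalence of hom-categories that preserves and reflects representability. Reflection needs a short argument: if $\Phi(F)\y\simeq\y f$ for a functor $f\colon N^{\mathrm{hc}}\mA\to N^{\mathrm{hc}}\mB$, then each value $F\y_{\mA}(a)$ is isomorphic in $\Ho\Psh_{\sSet}(\mB)$ to a representable. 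This uses that the straightening equivalence \cite[4.2.4.4]{HTT} is compatible with the Yoneda embeddings. \Cref{lem:representable-normal-form-M} then upgrades $F$ to a representable Quillen profunctor.

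\emph{The arrow $\Psi\colon\bE_{\mathrm{RV}}\to\bE_{qP}$.} \Cref{lem:module-comparison-coherence} shows that $\Psi$ is a morphism of equipments, and \Cref{prop:Phi-RV-Verdugo-conditions} gives the conditions. $(w1)$ is immediate because $\Psi$ is the identity on objects. $(w4)$ comes from the mapping-space equivalence of \Cref{lem:kernel-square-correspondence}, passed to $\pi_0$ and combined with the naturality in \Cref{rem:natural-modules-profunctors}.

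\emph{Main obstacle.} I expect the delicate part to be the coherence issue in $(w4)$. Squares in all three double categories are $\pi_0$ of mapping spaces, with the boundary fixed only up to the chosen equivalences. To obtain a bijection for every prescribed boundary, and not merely up to isomorphism of boundaries, I would transport along the pseudofunctor coherence isomorphisms: the action on squares with a fixed boundary is a bijection if and only if it is one after conjugating by invertible globular cells. Once this is in place, the three conditions combine to give the statement, and \Cref{prop:RVstructureequiv} yields the corresponding equivalence of $\mL_{RV}$-structures.
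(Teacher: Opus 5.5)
Your proposal is correct and follows the paper's route: the theorem is proved by assembling the component comparison results, and the paper's own proof is just a citation of the propositions establishing $(w1)$--$(w4)$ for $\Phi$ and for $|-|$. Your version is somewhat more complete than that one-line proof. It also invokes the corresponding results for $\Psi$, which the paper's proof does not cite explicitly, and it spells out two points the paper leaves implicit: the reflection of representability for $\Phi$, and the transport of squares along the pseudofunctor coherence cells in $(w4)$.
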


\begin{proof}
\Cref{prop:Phi-Delta-Verdugo-conditions} and \Cref{prop:simplicial-topological-equipment}.
\end{proof}

\begin{corollary}[Model invariance I]\label{cor:modelinvariance}
   Formulas in Riehl-Verity's language for formal category theory are invariant under the equivalences of \Cref{thm:comparison-before-strictification}.
\end{corollary}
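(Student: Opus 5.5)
The corollary follows by combining \Cref{thm:comparison-before-strictification} with the invariance results of \Cref{subsec:equivalences}. \Cref{thm:comparison-before-strictification} gives a zig-zag of equipment biequivalences
$$\mathbb{E}_{\mathsf{Top}}\xleftarrow{|-|}\bE_{\Delta}\xrightarrow{\Phi}\bE_{qP}\xleftarrow{\Psi}\bE_{\mathrm{RV}}.$$
The plan has three steps.

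\textbf{Step 1: each leg is an $\mL_{RV}$-equivalence.} We apply \Cref{prop:RVstructureequiv} to each of $|-|$, $\Phi$ and $\Psi$. Each is a morphism of equipments satisfying $(w1)$, $(w2)$, $(w3')$ and $(w4)$, by \Cref{prop:simplicial-topological-equipment}, \Cref{prop:Phi-Delta-Verdugo-conditions} and \Cref{prop:Phi-RV-Verdugo-conditions}. So each induces an equivalence of $\mL_{RV}$-structures in Makkai's sense, \cite[11.2.25]{RV22}.

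Some care is needed about where these structures live. $\bE_{\mathrm{RV}}$ is a virtual equipment that we showed admits all composites (\Cref{thm:actual-equipment-of-modules}). Its $\mL_{RV}$-structure is the one Riehl and Verity attach to it, \cite[11.3.7]{RV22}. For the pseudo double categories, I would use the horizontal strictification of \cite{GP99,Cam19}, as in the proof of \Cref{prop:RVstructureequiv}. The point to check is that passing from a pseudo double category with all composites to its underlying virtual double category does not change the $\mL_{RV}$-structure. This holds because $\mL_{RV}$ only involves multi-cells with a single target, and these are recovered from squares out of horizontal composites.

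\textbf{Step 2: compose the zig-zag.} Equivalence of $\mL$-structures is an equivalence relation: it is witnessed by spans of very surjective maps, and such spans compose by pullback \cite{Mak95}. Hence all four structures $\mathbb{E}_{\mathsf{Top}}$, $\bE_{\Delta}$, $\bE_{qP}$ and $\bE_{\mathrm{RV}}$ are pairwise $\mL_{RV}$-equivalent.

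\textbf{Step 3: apply Makkai's invariance theorem.} By \cite[11.2.28]{RV22}, $\mL_{RV}$-equivalent structures satisfy the same formulas in any context, with parameters transported along the span. This is exactly the claimed model invariance. For the strict pieces one can alternatively use \Cref{Verdugostructureequiv} with Verdugo's language.

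I expect the main obstacle to be Step 1's identification of the $\mL_{RV}$-structure of a genuine (pseudo) equipment with that of its underlying virtual equipment, particularly for $\bE_{\mathrm{RV}}$. There, composites exist only up to the reflection of \Cref{lem:stevenson-module-reflection}. I would first try to show that the comparison maps from strictification are themselves very surjective on every sort of $\mL_{RV}$. This would reduce the question to the fact that the strictification is locally an isomorphism on cells with prescribed boundary.
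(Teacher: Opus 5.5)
Your proposal is correct and follows the paper's argument. The paper's proof is just \Cref{prop:RVstructureequiv} applied to each leg of the zig-zag, followed by Makkai's invariance theorem \cite[11.2.28]{RV22}. The obstacle you anticipate in Step~1 is already handled inside the proof of \Cref{prop:RVstructureequiv}, via horizontal strictification together with the span of strict surjective equivalences from \cite[5.8]{CL26}. Your Step~2 is harmless but not needed, since invariance is claimed leg by leg.
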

\begin{proof}
    Follows by \Cref{prop:RVstructureequiv} and \cite[11.2.28]{RV22}.
\end{proof}
Let $\mathbb{E}^{\mathrm{str}}_{\mathrm{RV}} \to \mathbb{E}_{\mathrm{RV}}$ be the horizontal strictification of $\mathbb{E}_{\mathrm{RV}}$. We obtain equivalences where all equipments are strict $$\mathbb{E}_{\mathsf{Top}}\xleftarrow{\;|-|\;} \bE_{\Delta} \xrightarrow{\;\Phi\;} \bE_{qP} \xleftarrow{\;\Psi'\;} \mathbb{E}^{\mathrm{str}}_{\mathrm{RV}}$$

\begin{corollary}[Model invariance II]
    Formulas in Verdugo's language for formal category theory are invariant under the equivalences $|-|$, $\Phi$ and $\Psi'$.
\end{corollary}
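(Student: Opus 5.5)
The plan is to reduce Model invariance II to \Cref{Verdugostructureequiv}, which requires strict double categories on both ends of each comparison, and then to invoke Makkai's invariance theorem in the form used by Verdugo \cite[7.10--7.12]{Ver25}. So I first check that $\mathbb{E}_{\mathsf{Top}}$, $\bE_{\Delta}$, $\bE_{qP}$ and $\mathbb{E}^{\mathrm{str}}_{\mathrm{RV}}$ are strict double categories.

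\textbf{Step 1: strictness.} For $\bE_{\Delta}$ and $\mathbb{E}_{\mathsf{Top}}$, horizontal composition is composition of enriched left Quillen functors $\Psh_{\mV}(\mA)\to\Psh_{\mV}(\mB)$, which is strictly associative and unital, and $\Pro_{\mV}$ is a strict $2$-category by \Cref{def:Pro-M}. Vertical arrows and squares are likewise inherited strictly. For $\bE_{qP}$, composition of functors $\mP(A)\to\mP(B)$ of simplicial sets is strictly associative, and the hom-categories are homotopy categories, so $qPro$ is a strict $2$-category. Finally, $\mathbb{E}^{\mathrm{str}}_{\mathrm{RV}}$ is strict by the construction of horizontal strictification \cite{GP99,Cam19}.

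\textbf{Step 2: each comparison is an equipment biequivalence.} For $|-|$ this is \Cref{prop:simplicial-topological-equipment}, and for $\Phi$ it is \Cref{prop:Phi-Delta-Verdugo-conditions}. For $\Psi'$, I define it as the composite $\mathbb{E}^{\mathrm{str}}_{\mathrm{RV}}\to\bE_{\mathrm{RV}}\xrightarrow{\Psi}\bE_{qP}$. The strictification map satisfies $(w1)$–$(w4)$ of \Cref{def:equipequivalence}: it is bijective on objects and vertical arrows, and a biequivalence on the horizontal bicategories. $\Psi$ satisfies the same conditions by \Cref{prop:Phi-RV-Verdugo-conditions}. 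The conditions $(w1)$–$(w4)$ are stable under composition, because they amount to biequivalence of the vertical and horizontal fragments (see the remark after \Cref{def:equipequivalence}) and biequivalences compose.

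\textbf{Step 3: conclude.} \Cref{Verdugostructureequiv} turns each of $|-|$, $\Phi$, $\Psi'$ into an equivalence of $\mL_V$-structures. Verdugo's invariance result (Makkai's theorem, \cite[6.31, 7.12]{Ver25}) then shows that $\mL_V$-formulas are preserved and reflected along them.

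The main obstacle is the strictness required in Step 1. Strictness of $\bE_{qP}$ depends on the chosen model of $\mP(A)$ and on composition of simplicial maps, which is honestly associative, so I expect this to be fine. The subtler points are two. First, the equivalence $\Phi$ is itself only a pseudo double functor, whereas \Cref{Verdugostructureequiv} asks only that source and target be strict, so this should suffice. Second, $\Psi'$ is built from a pseudofunctor into a strict target. If Campbell–Leinster's span construction \cite[5.8]{CL26} needs strict morphisms, I would first replace each pseudo comparison by a span of strict surjective equivalences through a strictified intermediate, exactly as in the proof of \Cref{prop:RVstructureequiv}.
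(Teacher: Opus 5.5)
Your proposal is correct and follows the paper's argument. The paper's proof is only the one-line appeal to \Cref{Verdugostructureequiv} together with Verdugo's invariance theorem \cite[6.32]{Ver25}; your Steps 1 and 2 make explicit what the paper leaves implicit, namely that all four equipments are strict and that $\Psi'$, the composite of the strictification map with $\Psi$, still satisfies $(w1)$--$(w4)$.
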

\begin{proof}
Follows from \Cref{Verdugostructureequiv} and \cite[6.32]{Ver25}.
\end{proof}

\section{Addendum: the enhancement to an $\infty$-equipment}\label{sec:inftyequip}

We shall see that the equipment constructed in \Cref{th:equipment-M} is a quotient of an $\bigl(\infty,2\bigr)$-categorical equipment. 

For our purpose it is convenient to adopt as a model of $(\infty,2)$-categories the one given by complete twofold Segal spaces \cite{Bar05}. We recall that a twofold Segal space is a bisimplicial object in Kan complexes satisfying the Segal condition in each variable, and such that $X_{0,\bullet}$ is essentially discrete. It is called \emph{locally complete} when $X_{p,\bullet}$ is complete for every $p$, and \emph{complete} when in addition $X_{\bullet,0}$ is complete.

The construction \ref{def:Pro-M-hom} upgrades to a category $\Pro^{rel}_{\mV}$ enriched in relative categories, simply by the compatibility of the composition of left Quillen functors with cofibrant-wise equivalences. We let $\Cat_{\infty}$ denote the very large quasicategory of possibly large quasicategories, and stipulate that $\mS$ in this section denotes large spaces. Localization preserves coproducts and finite products as a functor $L_{DK} \colon N(\Cat_{rel}) \to \Cat_{\infty}$ from the very large $1$-category of relative categories. Given that $\Cat_{rel}$ is cartesian, we can consider the internal nerve of $\Pro^{rel}_{\mV}$, producing a simplicial object $N_{\Cat_{rel}}(\Pro^{rel}_{\mV}) \colon \Delta^{\op} \to \Cat_{rel}$. By composition we therefore obtain a functor $$\N_{\bullet,\bullet}(\Pro^{rel}_{\mV}) \colon N(\Delta^{\op}) \to N(\Cat_{rel}) \xrightarrow{L_{DK}} \Cat_{\infty} \hookrightarrow \Fun(\Delta^{\op},\mS)$$
That is, the large Kan complex $$\N_{p,q}(\Pro^{rel}_{\mV}) = \Fun\bigl(\mathrm{[q]},L_{DK}\bigl(N_{\Cat_{rel}}(\Pro^{rel}_{\mV})_p\bigr)\bigr)^{\simeq}$$ is given by the core (the maximal Kan subcomplex) of the quasicategory of functors from $\mathrm{[q]}$ to the localization of the relative category of strings of length $p$ of composable Quillen profunctors.

In the $q$-direction the Segal maps are equivalences by construction; in the $p$-direction, the Segal condition follows from the limit-preservation of $\Fun(\mathrm{[q]},-)^{\simeq}$. We obtain a locally complete two-fold Segal space. We now restrict, in each degree $p$, to the full replete subcategory spanned by strings of representable Quillen profunctors, and then apply the two-fold completion functor to the resulting inclusion of locally complete two-fold Segal spaces. This way, we get a map of complete two-fold Segal spaces that we denote $$\iota \colon \mV\Cat_{\infty} \to \Pro^{\infty}_{\mV}$$
This morphism, by construction, is locally fully faithful and every object in the target is, up to equivalence, in its image. Since adjunctions in complete two-fold Segal spaces are detected at the level of the homotopy $2$-category (\cite[11.2 and 11.5]{Hau18}, following \cite[4.3.11]{RV16}) our $\Cref{th:equipment-M}$ ensures that the morphism $\iota$ satisfies the $\infty$-categorical version of Wood's definition of equipments. The corresponding version in terms of double $\infty$-categories has been introduced by Ruit \cite{Rui23}. It is readily seen that $\iota$ gives rise to a double $\infty$-category, so that it is an equipment in Ruit's sense \cite[5.18 and 5.19]{Rui23}.

\appendix

\section{Enriched projectively cofibrant replacements}
\label{ap:A}
\label{subsec:projective-cofibrant-replacements-M}

Let $\mV$ a monoidal model category with cofibrant unit, let $\mC$ a locally cofibrant $\mV$-category, and assume the projective model structure exists on $\mV\Fun(\mC,\mV)$. Throughout this section, for an object $X$ in a model category, a cofibrant replacement $QX$ of $X$ means cofibrant object weakly equivalent to $X$.

We record an enrichedly functorial projectively cofibrant replacement for enriched diagrams: it is the one obtained from the comonad resolution associated with the adjunction $$T: \mV\Fun(\operatorname{\mathrm{obj}}\mC,\mV) \rightleftarrows \mV\Fun(\mC,\mV):U$$ where $(TZ)(c) = \coprod_{c_0\in\operatorname{obj}\mC} \mC(c_0,c)\otimes Z(c_0)$, and $\operatorname{\mathrm{obj}}\mC$ is the discrete $\mV$-category with the same objects as $\mC$ (defined by ${\mathrm{obj}}\mC(c,c') =\tu_{\mV}$ when $c=c'$ and the initial object otherwise).

\begin{notation}\label{not:B-bullet}
    For a $\mV$-functor $X:\mC\to\mV$, we denote $$B_\bullet^{\mC}X\to X$$ its comonad resolution, which is an augmented simplicial object $\bigl(\Delta_+\bigr)^{\op} \to \mV$ given by $$B_n^{\mC}X=(TU)^{n+1}X$$ and $B_{-1}^{\mC}X=X$. Explicitly, for $n \geq 0$
\begin{equation}\label{eq:cotriple-resolution-M} B_n^{\mC}X(c) = \coprod_{c_0,\ldots,c_n} \mC(c_n,c)\otimes \mC(c_{n-1},c_n)\otimes\cdots\otimes \mC(c_0,c_1)\otimes X(c_0). \end{equation}
\end{notation}
Recall that a cosimplicial object $C \colon \Delta \to \mV$ in a model category is \emph{Reedy cofibrant} when the latching maps $L_n \colon\partial^n C \to C_n $ are cofibrations in $\mV$, where $\partial^n C = \colim_{ [m]<[n] \in \Delta_{\mathrm{inj}}}C^m$. \begin{definition}\label{cosimpres} A \emph{cosimplicial resolution} of an object $X \in \mV$ is a Reedy cofibrant $R(X) \colon \Delta \to \mV$ which is weakly equivalent to the constant cosimplicial object at $X$.
 \end{definition}
\begin{notation}\label{not:realization}
    Let $R^{\bullet}(\cst\tu_{\mV})\xrightarrow{\sim}\cst \tu_{\mV}$ be a Reedy cofibrant replacement of the constant cosimplicial object at the monoidal unit of $\mV$ . Let $\mN$ be a $\mV$-model category. Let $\Delta_{\mathrm{inj}}\subseteq\Delta$ be the subcategory of injective maps. For a simplicial object $Z_\bullet$ in $\mN$, we denote
\begin{equation}\label{eq:fat-realization} | \! | \! |  Z_\bullet| \! | \! | := \int^{[n]\in\Delta_{\mathrm{inj}}} R^n(\cst\tu_{\mV})\otimes Z_n \end{equation} 
its \emph{fat realization}. We write
\begin{equation}\label{eq:realization} |Z_\bullet| := R^{\bullet}(\cst\tu_{\mV}) \otimes_{\Delta} Z_{\bullet} \cong \int^{[n]\in\Delta} R^{n}(\cst\tu_{\mV})\otimes Z_n \end{equation} for its \emph{realization}.

\end{notation}

\begin{remark}
Suppose $\mV$ is a simplicial model category, then we may take $R^{n}(\cst\tu_{\mV}) = \Delta^n$.
\end{remark}

\begin{lemma}\label{lem:fatreal}
  $| \! | \! |  Z_\bullet| \! | \! |  $ is cofibrant in $\mN$ when $Z_{\bullet}$ is objectwise cofibrant.
\end{lemma}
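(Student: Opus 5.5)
The plan is to identify the fat realization with a coend over $\Delta_{\mathrm{inj}}$ and to show that this coend is a left Quillen functor applied to a cofibrant object. Write $\Delta_{\mathrm{inj}}$ for the (direct) Reedy category of injective maps. A simplicial object $Z_\bullet$ restricts to a diagram $Z\colon\Delta_{\mathrm{inj}}^{\op}\to\mN$. Since $\Delta_{\mathrm{inj}}^{\op}$ is an inverse category, all of its latching objects are initial, so its Reedy cofibrations are exactly the objectwise cofibrations. Hence $Z$ is Reedy cofibrant on $\Delta_{\mathrm{inj}}^{\op}$ as soon as $Z_\bullet$ is objectwise cofibrant, which is our hypothesis.

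Next, restrict $R^\bullet=R^\bullet(\cst\tu_{\mV})$ along $\Delta_{\mathrm{inj}}\subseteq\Delta$. The Reedy structure on $\Delta$ has $\Delta_{\mathrm{inj}}$ as its direct part, and the latching object $\partial^nR=\colim_{[m]<[n]\in\Delta_{\mathrm{inj}}}R^m$ is computed entirely in $\Delta_{\mathrm{inj}}$. The restriction $R|_{\Delta_{\mathrm{inj}}}$ is therefore Reedy cofibrant on the direct category $\Delta_{\mathrm{inj}}$, which means it is projectively cofibrant there.

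We then invoke the standard two-variable statement: the coend
\[
-\otimes_{\Delta_{\mathrm{inj}}}-\colon \mV^{\Delta_{\mathrm{inj}}}_{\mathrm{Reedy}}\times\mN^{\Delta_{\mathrm{inj}}^{\op}}_{\mathrm{Reedy}}\to\mN
\]
is a left Quillen bifunctor. This follows from the pushout-product axiom of the $\mV$-model category $\mN$ together with the usual skeletal filtration argument: filter by degree $n$ and note that each stage is a pushout of the pushout-product of the latching map $\partial^nR\to R^n$ with the matching map $Z_n\to M_nZ$. For inverse diagrams the matching map of $Z$ is computed from $\Delta_{\mathrm{inj}}^{\op}$, which has no lower matching data on the "$\op$ direct" side, so only $Z_n$ itself appears. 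Concretely, $\lVert Z\rVert$ is the colimit of $\lVert Z\rVert_{\le n}$, and
\[
\lVert Z\rVert_{\le n-1}\to\lVert Z\rVert_{\le n}
\]
is a pushout of $(\partial^nR\otimes Z_n)\cup\cdots\to R^n\otimes Z_n$. Applying this to the map from the initial object, each step $\lVert Z\rVert_{\le n-1}\to\lVert Z\rVert_{\le n}$ is a pushout of $\partial^nR\otimes Z_n\to R^n\otimes Z_n$. That map is the pushout-product of the cofibration $\partial^nR\to R^n$ in $\mV$ with the cofibration $\emptyset\to Z_n$ in $\mN$, so it is a cofibration by the $\mV$-model axiom.

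Transfinite composition then shows that $\emptyset\to\lVert Z_\bullet\rVert$ is a cofibration. The step needing the most care is the skeletal filtration for a coend over $\Delta_{\mathrm{inj}}$: we must check that the $n$-th stage really is the pushout along $\partial^nR\otimes Z_n\to R^n\otimes Z_n$. This holds because $\Delta_{\mathrm{inj}}$ has no nontrivial endomorphisms, so no quotient by automorphisms arises, and because the injections into $[n]$ from lower degrees are exactly what forms the latching object.
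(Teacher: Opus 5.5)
Your argument is correct. It differs from the paper's in how the key step is verified.

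The paper's proof has two parts.
\begin{itemize}
\item On the direct category $\Delta_{\mathrm{inj}}$, the Reedy and projective model structures on $\Fun(\Delta_{\mathrm{inj}},\mV)$ coincide, so the restriction of $R=R^\bullet(\cst\tu_{\mV})$ to $\Delta_{\mathrm{inj}}$ is projectively cofibrant.
\item Since $Z$ is objectwise cofibrant, $(-)\otimes_{\Delta_{\mathrm{inj}}}Z$ is left Quillen by \Cref{lem:leftquiliffobjcof}. That criterion works on the adjoint side: the right adjoint $e\mapsto\mN(Z_\bullet,e)$ preserves objectwise (trivial) fibrations, by the $\mV$-model axiom in its enriched-hom form.
\end{itemize}

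You instead prove the cofibrancy cellularly. The skeletal filtration of $R$ on $\Delta_{\mathrm{inj}}$ exhibits the fat realization as a countable composite of pushouts of $\partial^nR\otimes Z_n\to R^n\otimes Z_n$. This filtration works because the representable $\Delta_{\mathrm{inj}}([n],-)$ has empty $(n-1)$-skeleton. Each such map is a cofibration by the pushout-product axiom.

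What each route buys:
\begin{itemize}
\item The paper's route is shorter, because it reuses an already-stated criterion.
\item Yours is self-contained. It works straight from the latching-map definition of Reedy cofibrancy in \Cref{cosimpres}, without needing the identification ``projective $=$ Reedy''; the standard proof of that identification is essentially your filtration. It also makes the cell structure of $|\!|\!|Z_\bullet|\!|\!|$ explicit.
\end{itemize}

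One slip: your sentence about the ``matching map $Z_n\to M_nZ$'' should refer to the \emph{latching} map $L_nZ\to Z_n$ on the inverse category $\Delta_{\mathrm{inj}}^{\op}$. That map is $\emptyset\to Z_n$, since $\Delta_{\mathrm{inj}}^{\op}$ has no degree-raising maps. The matching object of $Z$ is nontrivial but plays no role in cofibrancy. Your explicit computation already uses $\emptyset\to Z_n$, so the argument is unaffected.
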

\begin{proof}
    The projective model structure on $\Fun(\Delta_{\mathrm{inj}},\mV^{})$ coincides with the Reedy one, therefore the result follows from \Cref{lem:leftquiliffobjcof}.
\end{proof}

\begin{proposition}\label{prop:explicit-projective-replacement-M}
Let $\mC$ be locally cofibrant. There is a $\mV$-functor $$Q \colon \mV\Fun(\mC,\mV) \to \mV\Fun(\mC,\mV)$$ $$QX:=| \! | \! |  B_\bullet^{\mC}X| \! | \! |$$ such that for every $X:\mC\to\mV$ objectwise cofibrant, $Q(X)$ is projectively cofibrant, and admits a natural objectwise weak equivalence $q_X:QX\to X$.
\end{proposition}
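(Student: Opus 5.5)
We prove three properties of $QX=|\!|\!|B^{\mC}_\bullet X|\!|\!|$ in turn: it is an enriched functor, it is projectively cofibrant, and it maps to $X$ by a natural objectwise weak equivalence.

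\emph{Functoriality.} The assignment $X\mapsto B^{\mC}_\bullet X=(TU)^{\bullet+1}X$ is a $\mV$-functor into simplicial objects of $\mV\Fun(\mC,\mV)$, since $T$ and $U$ are $\mV$-functors. The fat realization $|\!|\!|-|\!|\!|$ is a weighted colimit, namely a coend over $\Delta_{\mathrm{inj}}$ against the fixed weight $R^\bullet(\cst\tu_{\mV})$. It is therefore also a $\mV$-functor. Here we regard $\mN=\mV\Fun(\mC,\mV)_{\mathrm{proj}}$, which is a $\mV$-model category with the tensoring computed pointwise.

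\emph{Cofibrancy.} The key observation is that each $B_n^{\mC}X=T(U(TU)^nX)$ is in the image of $T$. Since $T$ is left Quillen (its right adjoint $U$ preserves objectwise fibrations and trivial fibrations), $B_n^{\mC}X$ is projectively cofibrant as soon as $U(TU)^nX$ is cofibrant in $\mV\Fun(\operatorname{obj}\mC,\mV)$. That condition is just objectwise cofibrancy, and we check it by induction on $n$. The base case is that $X$ is objectwise cofibrant by hypothesis. For the inductive step, suppose $Z$ is objectwise cofibrant. Then $\mC(c_0,c)\otimes Z(c_0)$ is cofibrant, because $\mC$ is locally cofibrant and $\mV$ is a monoidal model category, and coproducts of cofibrant objects are cofibrant; hence $TZ$ is objectwise cofibrant. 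So $B^{\mC}_\bullet X$ is a simplicial object of $\mN$ that is cofibrant in each degree, and Lemma \ref{lem:fatreal}, applied in $\mN$, shows that $QX$ is projectively cofibrant.

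\emph{The map $q_X$.} The augmentation $B_\bullet^{\mC}X\to X$ induces $q_X\colon QX\to|\!|\!|\cst X|\!|\!|$. Composing with the map induced by $R^\bullet(\cst\tu_{\mV})\to\cst\tu_{\mV}$, and using that $\int^{\Delta_{\mathrm{inj}}}\cst$ of the unit gives $\tu_{\mV}$ (the terminal-object property of $\Delta_{\mathrm{inj}}$-colimits of constant diagrams, up to the appropriate identification), we obtain $q_X\colon QX\to X$. Naturality of $q_X$ is clear. To check that $q_X$ is an objectwise weak equivalence, we evaluate at $c$. Evaluation commutes with the coend, so $(QX)(c)=|\!|\!|B^{\mC}_\bullet X(c)|\!|\!|$.

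The augmented simplicial object $B_\bullet^{\mC}X(c)\to X(c)$ admits an \emph{extra degeneracy}. This comes from the standard splitting available after applying $U$: the unit of $T\dashv U$ inserts the identity $\tu\to\mC(c,c)$, so $U(B_\bullet X)\to UX$ is split. A split augmented simplicial object is simplicially homotopy equivalent to the constant object at its augmentation, and this homotopy equivalence is compatible with restriction to $\Delta_{\mathrm{inj}}$ via the fat realization. It then remains to identify $|\!|\!|\cst Y|\!|\!|$ with $Y$ up to weak equivalence for cofibrant $Y$: this holds because $|\!|\!|\cst\tu_{\mV}|\!|\!|\to\tu_{\mV}$ is a weak equivalence between cofibrant objects, being the homotopy colimit over $\Delta_{\mathrm{inj}}$, which has contractible nerve, of a constant diagram.

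\emph{Main obstacle.} I expect this last step to be the hardest. Extra degeneracies are not morphisms in $\Delta_{\mathrm{inj}}$, so transporting the contraction through the fat realization needs care. I would first try to argue homotopically. The fat realization is a left Quillen functor from projective (equivalently Reedy) $\Delta_{\mathrm{inj}}^{\op}$-diagrams, so it computes the homotopy colimit over $\Delta^{\op}_{\mathrm{inj}}$. That colimit agrees with the homotopy colimit over $\Delta^{\op}$, since $\Delta_{\mathrm{inj}}^{\op}\to\Delta^{\op}$ is homotopy final. For an augmented object with extra degeneracy, the homotopy colimit over $\Delta^{\op}$ is the augmentation.
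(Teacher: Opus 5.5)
Your proposal is correct and follows the paper's own route: each $B^{\mC}_nX$ is projectively cofibrant because it is a free, left Quillen functor ($T$, or equivalently the $\mC(c_n,-)\otimes-$ used in the paper) applied to an objectwise cofibrant input, so Lemma \ref{lem:fatreal} gives cofibrancy of $QX$; the weak equivalence $q_X$ then comes from the extra degeneracy of $UB^{\mC}_\bullet X\to UX$. The paper simply asserts that last implication, and your homotopy-finality argument fills it in soundly: $\Delta_{\mathrm{inj}}^{\op}\to\Delta^{\op}$ is homotopy final, and a split augmented simplicial object has its augmentation as homotopy colimit.

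Two small corrections, neither of which affects the conclusion. First, for $\Delta_{\mathrm{inj}}^{\op}$-diagrams the Reedy structure is the injective one, with objectwise cofibrations. It is on $\Delta_{\mathrm{inj}}$-diagrams, i.e.\ on the weight $R^\bullet$, that Reedy coincides with projective. What you actually need is that $R^\bullet\otimes_{\Delta_{\mathrm{inj}}}(-)$ is left Quillen from the Reedy structure, so that it computes the homotopy colimit on objectwise cofibrant semi-simplicial objects, and this holds. Second, $\int^{\Delta_{\mathrm{inj}}}\cst\tu_{\mV}\cong\tu_{\mV}$ holds because $\Delta_{\mathrm{inj}}$ is connected; it has no terminal object.
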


\begin{proof}
For a sequence $c_0,\ldots,c_n$, take $Y(c_0,\ldots,c_n) = \mC(c_{n-1},c_n)\otimes\cdots\otimes \mC(c_0,c_1)\otimes X(c_0).$ The functor $\mC(c_n,-)\otimes-:\mV\to \mV\Fun(\mC,\mV)_{\mathrm{proj}}$ is left Quillen and $Y(c_0,\ldots,c_n)$ is cofibrant (by our assumptions on $\mC$ and $X$). We obtain that the simplicial object in $\mV\Fun(\mC,\mV)_{\mathrm{proj}}$ given by $Z_{\bullet}=B_{\bullet}^{\mC}X(-)$ is degreewise projectively cofibrant. Thus, $| \! | \! |   B_\bullet^{\mC}X| \! | \! |  $ is projectively cofibrant by \Cref{lem:fatreal}. The extra degeneracy of $UB_\bullet^{\mC}X\to UX$ implies that $Uq_X \colon UQX\to UX$ is objectwise a weak equivalence, whence $q_X$ is too.
\end{proof}
\begin{remark}{\label{rem:locwellp}} A locally cofibrant $\mV$-category $\mC$ is \emph{locally well-pointed} if for every $c \in \mC$, the identity morphism $\id_{c}\ \colon \tu_{\mV}\to\mC(c,c)$ is a cofibration. In this case, in \ref{prop:explicit-projective-replacement-M} one may instead take $QX:=|B_\bullet^{\mC}X|$. If the unit of $\mV$ is terminal and every monomorphism is a cofibration, then every $\mV$-category is locally cofibrant and locally well pointed.
\end{remark}

\end{document}